\def\ps@pprintTitle{%
 \let\@oddhead\@empty
 \let\@evenhead\@empty
 \def\@oddfoot{}%
 \let\@evenfoot\@oddfoot}
\newcommand{\ps}{\frac{\partial}{\partial s}}
\newcommand{\LL}{\mathcal{L}}
\newcommand{\scal}{\textnormal{scal}}
\newcommand{\Rm}{\textnormal{Rm}}
\newcommand{\Ric}{\textnormal{Ric}}
\newcommand{\aLL}{\overline{\mathcal{L}}}
\newcommand{\pt}{\frac{\partial}{\partial t}}
\newcommand{\psp}{\frac{\partial^+}{\partial s}}
\newcommand{\ptp}{\frac{\partial^+}{\partial t}}
\numberwithin{equation}{section}
\newtheorem{theorem}{Theorem}[section]
\newtheorem{lem}[theorem]{Lemma}
\newtheorem{remark}[theorem]{Remark}
\newtheorem{prop}[theorem]{Proposition}
\newtheorem{claim}[theorem]{Claim}
\newtheorem{cor}[theorem]{Corollary} 
\theoremstyle{definition}
\newtheorem{defn}[theorem]{Definition}
\xpatchcmd{\tableofcontents}{\contentsname \@mkboth}{\small\contentsname \@mkboth}{}{}
\xpatchcmd{\listoffigures}{\chapter *{\listfigurename }}{\chapter *{\small\listfigurename }}{}{}
\begin{document}
\begin{frontmatter}

\title{Ricci flow under local almost non-negative curvature conditions }

\author[Yi]{Yi Lai\corref{cor}}
\ead{yilai@berkeley.math.edu}
\ead[url]{https://math.berkeley.edu/~yilai}
\cortext[cor]{Corresponding author.}
\address[berkeley]{Department of Mathematics, University of California, Berkeley, CA 94720, USA}

\begin{abstract}
We find a local solution to the Ricci flow equation under a negative lower bound for many known curvature conditions. The flow exists for a uniform amount of time, during which the curvature stays bounded below by a controllable negative number.
The curvature conditions we consider include 2-non-negative and weakly $\textnormal{PIC}_1$ cases, of which the results are new.
We complete the discussion of the almost preservation problem by Bamler-Cabezas-Rivas-Wilking, and the 2-non-negative case generalizes a result in 3D by Simon-Topping to higher dimensions.

As an application, we use the local Ricci flow to smooth a metric space which is the limit of a sequence of manifolds with the almost non-negative curvature conditions, and show that this limit space is bi-H$\ddot{\textnormal{o}}$lder homeomorphic to a smooth manifold. 

\end{abstract}

\end{frontmatter}


\setcounter{tocdepth}{1}
%

\begin{section}{Introduction and main results}
Ricci flow as introduced by Hamilton in \cite{hamilton1}, describes the evolution of a time-dependent family $g(t)_{\{t\in I\}}$ of Riemannian metrics on a manifold $M$:
%
$$\frac{\partial}{\partial t}g(t)=-2\Ric(g(t)).$$
Here $\Ric(g(t))$ denotes the Ricci curvature of the metric $g(t)$. Hamilton used Ricci flow to prove that a compact three-manifold admitting a Riemannian metric of positive Ricci curvature must be a spherical space form. Since then Ricci flow has been used to prove many conjectures including the most remarkable Poincar\'e and Geometrization Conjectures in dimension $3$ by Perelman (\cite{Pel1}\cite{Pel2}\cite{Pel3}).


In general, Ricci flow tends to preserve some kind of positivity of curvatures. For example, positive scalar curvature is preserved in all dimensions. This follows from applying maximum principle to the evolution equation of scalar curvature, which is
\begin{equation*}
\pt \scal=\Delta \scal+2|\Ric|^2.
\end{equation*}
By developing a maximum principle for tensors, Hamilton \cite{hamilton1}\cite{hamilton2} proved that Ricci flow preserves the positivity of the Ricci tensor in dimension three and positivity of the curvature operator in all dimensions. 
H. Chen \cite{Chen} also proved the preservation of 2-non-negative curvature. The invariance of weakly $\textnormal{PIC}$ was first shown in dimension four by Hamilton \cite{PIC}, and the general case was obtained independently by Brendle and Schoen \cite{2} and by Nguyen \cite{17}. The curvature conditions weakly $\textnormal{PIC}_1$ and $\textnormal{PIC}_2$ were in turn introduced by Brendle and Schoen in \cite{2} and played a key role in their proof of the differentiable sphere theorem.
Finally in the K$\ddot{\textnormal{a}}$hler case, the condition of non-negative holomorphic bisectional curvature, which is a weaker condition than non-negative sectional curvature, is also preserved. This was shown by Bando \cite{Ba} in dimension three and by Mok \cite{Mo} in all dimensions. In \cite{Shi1997} Shi generalized this result to the complete K$\ddot{\textnormal{a}}$hler manifolds with bounded curvature.

In this paper, we study the preservation of almost non-negativity of curvature conditions. We say a quantity is almost non-negative when it has a negative lower bound. The almost non-negative case is less restrictive since it puts no constraints on the topology of the manifold. 
In \cite{almost}, Bamler, Cabezas-Rivas, and Wilking studied the complete manifold with bounded curvature, which satisfies global non-collapsedness and almost non-negativity for some curvature conditions. They showed that under the assumption, a Ricci flow exists for a uniform amount of time, during which the curvature can be bounded below by a negative constant depending only on initial conditions.
In the same paper, they also established some local results without the complete and curvature bound assumptions.  

However, the local cases of almost 2-non-negative curvature and weakly $\textnormal{PIC}_1$ remained unsolved. We verify these two local cases in this paper.
We use $\mathcal{C}$ to denote various non-negative curvature conditions, and write $\Rm\in \mathcal{C}$ to indicate that the curvature operator $\Rm$ satisfies the corresponding curvature condition. Then $\Rm+C\textnormal{I}\in\mathcal{C}$ indicates the nonnegativity of $\Rm+ C\textnormal{I}$, where $\textnormal{I}$ is the identity curvature operator whose scalar curvature is $n(n-1)$. Under this notation, our main theorem can be stated as below:

\begin{theorem}\label{t: theorem1}
Given $n\in\mathbb{N}$, $\alpha_0\in(0,1]$ and $v_0>0$, there exist positive constants $\tau=\tau(n,v_0,\alpha_0)$ and $C=C(n,v_0)$ such that the following holds: 
Let $(M^n,g_0)$ be a Riemannian manifold (not necessarily complete) and consider one of the following curvature conditions $\mathcal{C}$:
\begin{enumerate}
\item non-negative curvature operator;
\item 2-non-negative curvature operator\\
      (i.e. the sum of the lowest two eigenvalues of the curvature operator is non-negative);
\item weakly $\textnormal{PIC}_2$\\
      (i.e. non-negative complex sectional curvature, meaning that taking the cartesian product with $\mathbb{R}^2$ produces a non-negative isotropic curvature operator);
\item weakly $\textnormal{PIC}_1$\\
      (i.e. taking the cartesian product with $\mathbb{R}$ produces a non-negative isotropic curvature operator).
\end{enumerate}

Suppose $B_{g_0}(x_0,s_0)\subset\subset M$ for some $x_0\in M$ and $s_0>4$ such that
\begin{equation}\label{condition}
\begin{dcases}
\textnormal{Rm}_g+\alpha_0\textnormal{I}\in\mathcal{C} \quad \textnormal{on} \;\;\; B_{g_0}(x_0,s_0)\;\\
Vol_{g_0}B_{g_0}(x,1)\ge v_0>0\quad \textnormal{for all}\;\;\; x\in B_{g_0}(x_0,s_0-1)\;.
\end{dcases}
\end{equation}
Then there exists a Ricci flow $g(t)$ defined for $t\in[0,\tau]$ on $B_{g_0}(x_0,s_0-2)$, with $g(0)=g_0$, such that for all $t\in[0,\tau]$, 
\begin{equation}\label{checking}
\begin{dcases}
|\textnormal{Rm}|_{g(t)}\le\frac{C}{t}\quad\quad\textnormal{on}\quad B_{g_0}(x_0,s_0-2)\\
\textnormal{Rm}_{g(t)}+C\alpha_0\textnormal{I}\in\mathcal{C}. 
\end{dcases}
\end{equation}

\end{theorem}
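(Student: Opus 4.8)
The plan is to split \eqref{checking} into two essentially independent pieces: \textbf{(A)} the existence of a Ricci flow $g(t)$ on $B_{g_0}(x_0,s_0-2)$, defined for a uniform time $\tau$ and satisfying the decay $|\Rm|_{g(t)}\le C/t$; and \textbf{(B)} the almost-preservation of $\mathcal{C}$ along any such flow. Piece (A) is where the non-collapsing in \eqref{condition} is used, and piece (B) is where the convex-cone structure of $\mathcal{C}$ enters. For the two new cones --- $2$-non-negative and weakly $\textnormal{PIC}_1$ --- both pieces need new input, and I expect (B), more precisely the interaction of the $C/t$ bound with the cone near its boundary, to be the main obstacle.

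For (A): each of the four conditions, written as $\Rm_{g_0}+\alpha_0\textnormal{I}\in\mathcal{C}$, forces a one-sided curvature bound on $B_{g_0}(x_0,s_0)$, in particular a lower bound on all sectional curvatures of order $-c(n)\alpha_0$; together with the uniform non-collapsing this is exactly the setting in which a localized Ricci flow can be built. I would complete a neighbourhood of $\overline{B_{g_0}(x_0,s_0-1)}$ (gluing a controlled cap, or a conformal deformation near the boundary) so that Shi's theorem applies, and then show that for a uniform $\tau=\tau(n,v_0,\alpha_0)$ the resulting flow, restricted to $B_{g_0}(x_0,s_0-2)$, is independent of the choices by a standard localization argument. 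The decay $|\Rm|_{g(t)}\le C(n,v_0)/t$ I would establish by point-picking and blow-up: if it failed along a sequence of examples, rescale each at the first violating point to unit curvature; local non-collapsing produces a nontrivial complete pointed limit, which is an ancient, non-collapsed Ricci flow with bounded curvature on compact time-intervals and --- since $\alpha_0$ scales to $0$ --- with $\Rm\in\mathcal{C}$; such a flow must have curvature bounded by a definite constant at its basepoint, contradicting the normalization. This is the local analogue of the Bamler--Cabezas-Rivas--Wilking estimate; the novelty is that the argument, and its classification inputs for the limit, must be made to work under the weaker lower bounds attached to cones (2) and (4).

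For (B): fix a flow $g(t)$ on $B_{g_0}(x_0,s_0-2)$ with $|\Rm|_{g(t)}\le C/t$ as in (A). Let $\widehat{\mathcal{C}}$ be the closed convex $O(n)$-invariant cone of algebraic curvature operators on $\R^n$ defining $\mathcal{C}$; by Hamilton (curvature operator), H.~Chen ($2$-non-negative), and Brendle--Schoen and Nguyen (weakly $\textnormal{PIC}_1$ and $\textnormal{PIC}_2$), $\widehat{\mathcal{C}}$ is invariant under the Hamilton ODE $\tfrac{d}{dt}R=R^2+R^\#=:Q(R)$, and $\textnormal{I}$ is a uniform interior point. I would then seek a continuous family of closed convex $O(n)$-invariant cones $\widehat{\mathcal{C}}_t$, $t\in(0,\tau]$, of the form $\widehat{\mathcal{C}}_t=\{R:\ a(t)\,R+b(t)\,\textnormal{I}\in\widehat{\mathcal{C}}\}$ with $a\to1$, $b\to\alpha_0$ as $t\to0^+$ and $b(\tau)\le C\alpha_0$, chosen so that $\{\widehat{\mathcal{C}}_t\}$ is a Hamilton pinching family for $\partial_t\Rm=\Delta\Rm+Q(\Rm)$: this requires, at each $R\in\partial\widehat{\mathcal{C}}_t$ with supporting functional $\ell$, a tangency inequality relating $\langle Q(R),\ell\rangle$, $\langle L(R),\ell\rangle$ with $L(R):=\left.\tfrac{d}{d\epsilon}\right|_{\epsilon=0}Q(R+\epsilon\textnormal{I})$, $\langle\textnormal{I},\ell\rangle$, and $a',b'$. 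Imposing this forces an ODE system for $(a,b)$ driven by $|\Rm|\le C/t$; the delicate point is to choose $(a,b)$, using the structure of the cross term $L$ on $\partial\widehat{\mathcal{C}}$, so that $b$ stays bounded by $C\alpha_0$ on all of $[0,\tau]$ in spite of the $1/t$ in the forcing. Once the family is in hand, since $B_{g_0}(x_0,s_0-2)\subset\subset M$ and the flow has bounded curvature on each $[\delta,\tau]$, Hamilton's maximum principle applies on nested relatively compact balls --- the flow on the smaller ball being the restriction of one on a larger ball, so there is no boundary contribution --- and letting $\delta\to0$ gives $\Rm_{g(t)}\in\widehat{\mathcal{C}}_t$, i.e. $\Rm_{g(t)}+C\alpha_0\textnormal{I}\in\mathcal{C}$, for all $t\in[0,\tau]$.

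The main obstacle is the construction of this pinching family: arranging the ODE for $(a,b)$ so that the $1/t$-singular forcing does not break the uniform bound $b\le C\alpha_0$, and verifying the convexity and tangency for the $2$-non-negative and weakly $\textnormal{PIC}_1$ cones, whose boundary structure --- and the relevant strong maximum principle in the degenerate case --- is subtler than for the curvature-operator and $\textnormal{PIC}_2$ cones. A pervasive secondary difficulty in both (A) and (B) is the strictly local hypothesis: every compactness and maximum-principle argument must be run on balls that are shrunk slightly at each stage, with the decay $C/t$ and the uniform non-collapsing replacing the completeness and the global curvature bound available in the Bamler--Cabezas-Rivas--Wilking setting.
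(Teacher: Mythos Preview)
Your plan to treat (A) and (B) as ``essentially independent pieces'' is where the proposal departs from the paper and where the real difficulty is hidden. A single conformal completion followed by Shi's theorem produces a flow only up to a time $t_1$ controlled by $\sup|\Rm|$ of the completed metric, which is \emph{not} uniform in $(n,v_0,\alpha_0)$; your ``standard localization argument'' does not convert this into a uniform $\tau$. The paper instead iterates: after running to $t_1$ one shrinks the ball slightly, conformally completes again, and restarts Shi; the doubling--time estimate together with $|\Rm|\le C/t$ allows each step to advance time by a fixed multiplicative factor $\nu=1+\tfrac{1}{16C}$, so $t_{i+1}=\nu t_i$ reaches the target $\tau$ in finitely many (non-uniform) steps while losing only a controlled amount of radius. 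Crucially, the Curvature Decay Lemma that furnishes $|\Rm|\le C/t$ at each stage (your point-picking/blow-up argument, which is indeed the paper's Lemma~\ref{l: curvature decay}) takes as \emph{hypothesis} the bound $\ell\le 1$ on the relevant ball; hence (B) must be re-established after every restart before the next (A)-step can run. The two pieces are interlocked in an induction, not independent.

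The deeper gap is your mechanism for (B). The pinching--family/ODE route cannot absorb the $\scal\cdot\ell$ term in
\[
\pt\ell\le\Delta\ell+\scal\cdot\ell+C(n)\,\ell^2
\]
under the sole bound $|\Rm|\le C/t$: the comparison ODE $b'\gtrsim (C/t)\,b$ is singular at $t=0$ and has no finite solution with $b(0)=\alpha_0$, so no choice of cross-term $L$ on $\partial\widehat{\mathcal{C}}$ rescues a family $\{a(t)R+b(t)\textnormal{I}\in\widehat{\mathcal{C}}\}$ with $b$ uniformly bounded by $C\alpha_0$ down to $t=0$. The paper sidesteps this by working with the heat kernel $G$ of the operator $\pt-\Delta-\scal$: its conjugate is the ordinary backward heat equation, so $\int_M G(x,t;y,s)\,d_tx\equiv 1$, and this identity together with $|\Rm|\le C/t$ and non-collapsing forces a Gaussian upper bound on $G$ \emph{without any loss} from the $1/t$ in $\scal$. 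One then writes (after linearizing and inserting a cut-off $\phi$)
\[
\ell(x,t)\;\lesssim\;\int G(x,t;y,0)\,\ell(y,0)\,d_0y\;+\;\text{(boundary terms)}\;\le\;C\,\alpha_0.
\]
Because the inductive construction yields not one complete flow but a nested family of complete pieces (a ``Ricci flow in expansion''), the paper has to define a \emph{generalized} heat kernel across the restart times via the reproduction formula and prove the Gaussian bound for it (Proposition~\ref{generalized heat kernel}); together with a custom cut-off (Lemma~\ref{l: cutoff}) this is the technical heart of the argument and has no analogue in your proposal.
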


The results of the first and third conditions above were obtained in \cite{almost}. 
In dimensional three, 2-non-negative curvature has the same meaning as non-negative Ricci curvature, where the result was obtained by Simon and Topping in \cite{local3d} and \cite{mollification}.  

For each curvature condition $\mathcal{C}$, we define $\ell(x)\ge 0$ to be the smallest number such that $\Rm_{g}(x)+\ell(x)\,\textnormal{I}\in\mathcal{C}$. Then in each case the bound $\ell\le 1$ implies a lower bound on the Ricci curvature. We also observe that each curvature condition implies weakly $\textnormal{PIC}_1$. 
The method we use in the paper allows a uniform treatment of all curvature conditions that imply a lower bound for Ricci curvature and weakly $\textnormal{PIC}_1$. 



As an application we have the following global existence result on complete manifolds with possibly unbounded curvature. It extends the corresponding results in \cite{almost} to the 2-non-negative and weakly $\textnormal{PIC}_1$ cases. 

\begin{cor}\label{global existence}
Given $n\in \mathbb{N},\alpha_0\in(0,1]$ and $v_0>0$, there exist positive constants $C=C(n,v_0)$ and $\tau=\tau(n,v_0,\alpha_0)$ such that the following holds:
Let $\mathcal{C}$ be any curvature conditions listed in Theorem $\ref{t: theorem1}$, and $(M^n,g)$ be any complete Riemannian manifold (with possibly unbounded curvature) such that
\begin{equation}
\begin{dcases}
\Rm_g+\alpha_0\textnormal{I}\in\mathcal{C}\\
Vol_gB_g(p,1)\ge v_0 \quad\textnormal{for all} \,\,p\in M.
\end{dcases}
\end{equation}
Then there exists a complete Ricci flow $(M,g(t))_{t\in(0,\tau]}$ with $g(0)=g$ and so that
\begin{equation}
\begin{dcases}
\Rm_{g(t)}+C\alpha_0\textnormal{I}\in\mathcal{C}\quad\textnormal{for all}\,\,t\in(0,\tau]\,\,\textnormal{throughout}\,\,M\\
|\Rm|_{g(t)}\le\frac{C}{t}.
\end{dcases}
\end{equation}
\end{cor}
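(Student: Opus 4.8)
The plan is to deduce Corollary \ref{global existence} from Theorem \ref{t: theorem1} by a standard exhaustion-and-compactness argument. Since $(M,g)$ is complete, fix any point $p\in M$ and for each $s>4$ note that $B_g(p,s)\subset\subset M$; the two hypotheses of \eqref{condition} hold on $B_g(p,s)$ (indeed globally), so Theorem \ref{t: theorem1} yields a Ricci flow $g_s(t)$ on $B_g(p,s-2)$ for $t\in[0,\tau]$ with $g_s(0)=g$, satisfying the two conclusions in \eqref{checking} on that ball, with $\tau=\tau(n,v_0,\alpha_0)$ and $C=C(n,v_0)$ \emph{independent of $s$}. The goal is to let $s\to\infty$ and extract a limit flow defined on all of $M$.

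**Next I would** exploit the uniform curvature bound $|\Rm|_{g_s(t)}\le C/t$ to pass to a limit. This bound, via Shi's local derivative estimates applied on slightly smaller balls, gives uniform bounds on all covariant derivatives $|\nabla^k\Rm|_{g_s(t)}$ on compact subsets of $M\times(0,\tau]$. Combined with the volume lower bound (which together with two-sided curvature control on $(0,\tau]$ prevents collapsing and gives injectivity radius lower bounds on compact space-time regions, using e.g. the fact that the flows agree at $t=0$ and Perelman-type no-local-collapsing or a direct distance-distortion argument), one can apply the Arzel\`a--Ascoli/Hamilton compactness theorem for Ricci flows. Taking $s=s_j\to\infty$ and a diagonal subsequence, the flows $g_{s_j}(t)$ converge in $C^\infty_{loc}$ on $M\times(0,\tau]$ to a Ricci flow $g(t)$ on all of $M$. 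Because each $g_{s_j}$ starts at $g$ and $|\Rm|_{g_{s_j}(t)}\le C/t$ controls the distance distortion $\partial_t d_{g(t)}$ near $t=0$ (the integral $\int_0^{\tau}C/t\,dt$ diverges, so here one uses the sharper consequence that metrics are comparable on any fixed ball for small time, or the Simon--Topping-type estimates), the limit flow attains $g$ as its initial value in the appropriate (locally uniform, and in fact $C^0$ or better once curvature is bounded) sense, and completeness of $g(t)$ for $t>0$ follows from completeness of $g$ together with the uniform two-sided curvature bound. The bound $|\Rm|_{g(t)}\le C/t$ passes to the limit directly, and $\Rm_{g(t)}+C\alpha_0\textnormal{I}\in\mathcal{C}$ passes to the limit because $\mathcal{C}$ is a closed convex cone condition preserved under $C^\infty_{loc}$ convergence.

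**The main obstacle** I anticipate is not the compactness step itself but verifying that the limit flow has $g$ as its initial metric in a strong enough sense and is genuinely complete --- the curvature bound $C/t$ blows up as $t\to 0$, so one cannot naively integrate $|\partial_t g|\le 2|\Ric|\le 2(n-1)C/t$ to compare $g(t)$ with $g_0$. Here I would invoke the local estimates already built into the proof of Theorem \ref{t: theorem1} (which, to even make sense of "$g(0)=g_0$", must provide control like $e^{-\gamma(t)}g_0\le g(t)\le e^{\gamma(t)}g_0$ on balls with $\gamma(t)\to 0$, or the double-bootstrap distance estimates of Simon--Topping), and note these localized estimates are uniform in $s$ and hence survive the limit; this gives that $g(t)\to g$ locally uniformly as $t\to 0^+$ and, together with completeness of $g$, that $(M,g(t))$ is complete for each $t\in(0,\tau]$. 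A secondary point worth a remark is uniqueness --- one should observe that the particular limit flow constructed is independent of the choices of exhaustion and subsequence whenever such uniqueness results apply (e.g. in the presence of the $C/t$ curvature bound and the Ricci lower bound, by the Chen--Zhu or Topping uniqueness theorems), though the statement of the corollary as written only asserts existence.
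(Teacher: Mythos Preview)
Your overall strategy---apply Theorem~\ref{t: theorem1} on an exhaustion by balls, then pass to a limit---matches the paper's. The difference is in how the initial condition $g(0)=g$ is recovered, which you correctly flag as the main obstacle.

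The paper sidesteps this obstacle entirely by invoking an additional ingredient you do not mention: a local curvature estimate due to B.-L.~Chen and Simon (stated in the paper as Lemma~\ref{pseudo}). This says that if a (possibly incomplete) Ricci flow has $|\Rm|_{g(t)}\le c_0/t$ on $B_{g(t)}(y_0,r)$ and $|\Rm|_{g(0)}\le r^{-2}$ on $B_{g(0)}(y_0,r)$, then $|\Rm|_{g(t)}(y_0)\le e^{Cc_0}r^{-2}$ for all $t$. Since on any fixed compact set $B_{g_0}(x_0,r_0+2)$ the initial curvature is bounded (say by $r^{-2}$), applying this lemma to each local flow $g_k$ gives a curvature bound $K_0$ on $B_{g_0}(x_0,r_0+1)$ that is uniform in both $k$ \emph{and} $t\in[0,\tau]$, not just in $k$ with a blow-up at $t=0$. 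One then feeds this into a non-standard version of Shi's estimates (Lemma~\ref{Shi} in the paper, which assumes initial derivative bounds rather than waiting a positive time) to get uniform derivative bounds on $B_{g_0}(x_0,r_0)\times[0,\tau]$. Ascoli--Arzel\`a in local coordinates then produces a limit flow on the \emph{closed} interval $[0,\tau]$ with $g(0)=g_0$ directly, and a diagonal argument extends it to all of $M$.

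Your route---take the limit only on $(0,\tau]$ via Hamilton compactness and then argue separately that $g(t)\to g$ as $t\searrow 0$---is in principle workable, but your justification is underspecified: the appeal to ``local estimates already built into the proof of Theorem~\ref{t: theorem1}'' is vague, and the Simon--Topping distance-distortion argument you mention would need to be spelled out to show the limit metric at $t=0$ is $g$ rather than merely close to it. The paper's use of Chen's estimate is the cleaner device here; it converts the $C/t$ bound into a genuine time-uniform bound on compacta and removes the $t\to 0$ difficulty altogether.
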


To prove the corollary we apply the local Ricci flow in Theorem $\ref{t: theorem1}$ to a sequence of larger and larger balls on the complete manifold. Thanks to the curvature decay estimate $|\Rm|\le\frac{C}{t}$ in $\eqref{checking}$, we can then take a convergent subsequence and get a globally defined flow.

Another application is the following smoothing result for singular limit spaces of sequences of manifolds with lower curvature bounds, which asserts the limit space is bi-H$\ddot{\textnormal{o}}$lder homeomorphic to a smooth manifold.

\begin{cor}\label{limit space}
Given $n\in\mathbb{N}$, $\alpha_0,v_0>0$. Let $\mathcal{C}$ be any curvature conditions listed in Theorem $\ref{t: theorem1}$, and
 $(M^n_i,g_i)$ be a sequence of complete Riemannian manifolds such that for all $i$, we have 
\begin{equation}
\begin{dcases}
\Rm_{g_i}+\alpha_0 \textnormal{I}\in\mathcal{C}\;\;\;\;\;\textnormal{throughout}\; M_i \\
Vol_{g_i}B_{g_i}(x,1)\ge v_0\;\;\;\;\;\textnormal{for all}\; x\in M_i
\end{dcases}
\end{equation}
Then there exists a smooth manifold $M$, a point $x_{\infty}\in M$, and a continuous distance metric $d_0$ on $M$ such that for some points $x_i\in M_i$, $(M_i,d_{g_i},x_i)$ converges in the pointed Gromov-Hausdorff distance sense to $(M,d_0,x_{\infty})$. Furthermore, the metric space $(M,d_0)$ is bi-H$\ddot{o}$lder homeomorphic to the smooth manifold $M$ equipped with any smooth metric.
\end{cor}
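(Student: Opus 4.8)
The plan is to smooth each $(M_i,g_i)$ by the Ricci flow produced in Corollary~\ref{global existence}, extract a limit flow by compactness, and take $d_0$ to be the limit of the distance functions of the limit flow as $t\downarrow0$; this follows the scheme of Simon--Topping \cite{local3d},\cite{mollification} and of \cite{almost}, the only new ingredient being Corollary~\ref{global existence} itself. Applying that corollary to each $(M_i,g_i)$ gives complete Ricci flows $(M_i,g_i(t))_{t\in(0,\tau]}$ with $g_i(0)=g_i$, $\Rm_{g_i(t)}+C\alpha_0\textnormal{I}\in\mathcal{C}$ and $|\Rm|_{g_i(t)}\le C/t$. Since each condition $\mathcal{C}$ implies weakly $\textnormal{PIC}_1$ and hence non-negative Ricci curvature, $\Rm_{g_i(t)}+C\alpha_0\textnormal{I}\in\mathcal{C}$ forces $\Ric_{g_i(t)}\ge-\Lambda\,g_i(t)$ with $\Lambda:=(n-1)C\alpha_0$, and likewise $\Ric_{g_i(0)}\ge-\Lambda\,g_i(0)$; all of these bounds are uniform in $i$.

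I would then record the standard distance--distortion estimates for the flows, uniformly in $i$. The bound $\Ric_{g_i(t)}\ge-\Lambda$ gives $d_{g_i(t)}(x,y)\le e^{\Lambda(t-s)}\,d_{g_i(s)}(x,y)$ for $0<s\le t\le\tau$, and the bound $|\Rm|_{g_i(t)}\le C/t$ gives, through the Simon--Topping shrinking--balls estimate, $d_{g_i(t)}(x,y)\ge d_{g_i(s)}(x,y)-\beta\,(\sqrt t-\sqrt s)$ with $\beta=\beta(n,C)$. Together these make $t\mapsto d_{g_i(t)}$ uniformly Cauchy on compact sets as $t\downarrow0$, with limit $d_{g_i}$, so the same holds for any $C^\infty_{loc}$ limit flow. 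Moreover $d_{g_i(t)}\le e^{\Lambda t}d_{g_i(0)}$ gives $B_{g_i(t)}(x,1)\supset B_{g_i(0)}(x,e^{-\Lambda t})$, and combined with $\mathrm{Vol}_{g_i(t)}\ge e^{-n\Lambda t}\,\mathrm{Vol}_{g_i(0)}$ (from $\scal_{g_i(t)}\ge-n\Lambda$) and the Bishop--Gromov inequality at time $0$ one obtains $\tau'=\tau'(n,v_0,\alpha_0)\in(0,\tau]$ and $v_0'=v_0'(n,v_0,\alpha_0)>0$ with $\mathrm{Vol}_{g_i(t)}B_{g_i(t)}(x,1)\ge v_0'$ for all $x\in M_i$ and $t\in(0,\tau']$.

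Next, choosing any $x_i\in M_i$, I would apply Hamilton's compactness theorem for Ricci flows: the uniform bound $|\Rm|_{g_i(t)}\le C/t$ together with the persistent non-collapsing on $(0,\tau']$ (which, with the curvature bound, gives a positive injectivity radius lower bound at each fixed positive time) yields a subsequence of $(M_i,g_i(t),x_i)$ converging in the pointed $C^\infty_{loc}$ Cheeger--Gromov sense to a complete Ricci flow $(M,g(t),x_\infty)_{t\in(0,\tau']}$ satisfying the same curvature and volume bounds. Set $d_0:=\lim_{t\downarrow0}d_{g(t)}$, which exists and is a continuous distance metric on $M$ by the uniform Cauchy property, and which induces the smooth manifold topology of $M$ by the two-sided comparison with $d_{g(\bar t)}$ established below. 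Then $(M_i,d_{g_i},x_i)\to(M,d_0,x_\infty)$ in the pointed Gromov--Hausdorff sense follows by a diagonal argument: given $\varepsilon>0$, first fix $t\in(0,\tau']$ small enough that $d_{g_i}$ and $d_{g_i(t)}$ differ by less than $\varepsilon$ on the relevant balls uniformly in $i$ (distortion estimates) and $d_{g(t)}$ and $d_0$ differ by less than $\varepsilon$, and then invoke the smooth convergence $g_i(t)\to g(t)$ on compact subsets of $M$.

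Finally, for the bi-Hölder statement, fix $\bar t\in(0,\tau']$. One direction is immediate: $\Ric_{g(t)}\ge-\Lambda$ gives $d_{g(\bar t)}\le e^{\Lambda\bar t}\,d_0$. The reverse, Hölder, direction---$d_0(x,y)\le C'\,d_{g(\bar t)}(x,y)^{\gamma}$ whenever $d_{g(\bar t)}(x,y)$ is small, for suitable $\gamma=\gamma(n,v_0,\alpha_0)\in(0,1]$ and $C'$---is the heart of the matter, and I expect it to be the main obstacle; it is the only place where the structure of the flow is used beyond a uniform Ricci lower bound. Following the local Hölder estimates of Simon and of \cite{almost}, one starts from $d_0\le d_{g(t)}+\beta\sqrt t$ and sums such bounds over dyadic times down to a scale comparable to $d_{g(\bar t)}(x,y)^2$, controlling the number of $d_{g(t)}$-balls needed to cover a near-geodesic at each scale by the persistent volume lower bound; all constants must be tracked to be independent of $i$ so that the estimate descends to the limit flow. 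Granting this, the identity map is a bi-Hölder homeomorphism $(M,d_0)\to(M,d_{g(\bar t)})$; since $g(\bar t)$ is a smooth metric on $M$ whose distance function is locally bi-Lipschitz to that of any other smooth metric on $M$, it follows that $(M,d_0)$ is bi-Hölder homeomorphic to $M$ equipped with any smooth metric.
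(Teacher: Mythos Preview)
Your overall strategy matches the paper's exactly: apply Corollary~\ref{global existence} to each $(M_i,g_i)$, record the uniform curvature, volume, and two-sided distance-distortion estimates, pass to a smooth limit flow $(M,g(t))_{t\in(0,\tau']}$ via Hamilton's compactness, define $d_0=\lim_{t\searrow 0}d_{g(t)}$, and deduce pointed Gromov--Hausdorff convergence by interlacing the distortion estimates with the smooth convergence at a fixed small positive time. (The paper obtains the persistent volume lower bound from Lemma~\ref{l: volume} rather than from your direct Bishop--Gromov computation, but this is a cosmetic difference.)

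The one place where you diverge is the H\"older direction of the bi-H\"older homeomorphism, which you flag as ``the main obstacle'' and propose to handle by a dyadic ball-covering argument using the volume lower bound. The paper does this much more simply via Lemma~\ref{holderinequality}, which was already established in the preliminaries: from $|\Rm|_{g(t)}\le C/t$ one splits $[0,t]$ at $t_0\sim d_0(x,y)^2$, integrates $\partial_t^+ d\ge -\tfrac{\beta}{2}\sqrt{C/t}$ on $[0,t_0]$ to get $d_{g(t_0)}\ge\tfrac12 d_0$, and then integrates $\partial_t^+ d\ge -(n-1)\tfrac{C}{t}\,d$ on $[t_0,t]$ to obtain $d_{g(t)}(x,y)\ge\gamma\,d_0(x,y)^{1+2(n-1)C}$. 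No volume bound or covering is needed here, and the H\"older exponent comes out explicitly as $\tfrac{1}{1+2(n-1)C}$. Your sketched dyadic argument is more elaborate than necessary and, as written, somewhat vague about what exactly is being summed; the paper's route is both shorter and yields a sharper constant.
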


We give the proofs of Corollary $\ref{global existence}$ and $\ref{limit space}$ in Section 8.
We mention here that with some careful local distance distortion arguments, the same conclusion in Corollary $\ref{limit space}$ holds provided noncollapsedness of only one ball centered at a point. For detailed proof of this, we refer to \cite{mollification} where the argument is done for Ricci curvature and carries over to our case.

Finally, we sketch the proof of Theorem $\ref{t: theorem1}$ under some additional assumptions. That is, assuming $\eqref{condition}$ holds globally and a short time Ricci flow exists up to a uniform time $T<1$, during which $|\Rm|\le\frac{C}{t}$ holds, we want to show $\Rm_{g(t)}+C\alpha_0\textnormal{I}\in \mathcal{C}$ for all $t$. We define $\ell(x,t)$ by
\begin{equation}
\ell(x,t):=\inf\{\varepsilon\in[0,\infty)|\textnormal{Rm}_{g(t)}(x)+\varepsilon\textnormal{I}\in\mathcal{C}\}.
\end{equation}
Then it's equivalent to show $\ell(\cdot,t)\le C\alpha_0$ for all $t$. By \cite[Proposition~2.2]{almost}, $\ell$ satisfies an evolution inequality of the form
\begin{equation}\label{evolution}
 \pt \ell\le \Delta \ell+\scal\,\ell+C(n)\ell^2
\end{equation}
in the barrier sense for some dimensional constant $C(n)$. Assuming $\ell(x,t)\le 1$, then by the maximum principle, $\ell(\cdot,t)\le e^{C(n)t}h$ on $M\times[0,t)$, where $h$ solves
\begin{equation}
 \pt h=\Delta h+\scal \,h,\;\;\;h(\cdot,0)=\ell(\cdot,0).
\end{equation}
We can express this solution as
\begin{equation}\label{12}
 h(x,t)=\int_M G(x,t;y,0)\,\ell(y,0)\,d_0y,
\end{equation}
where $G(\cdot,\cdot;y,s)$ satisfies
\begin{equation}\label{11}
 (\pt-\Delta_{x,t}-\scal_{g(t)})G(x,t;y,s)=0\quad\textnormal{and}\quad \lim_{t\searrow s} G(x,t;y,s)=\delta_{y}(x).
\end{equation}
We say $G(\cdot,\cdot;y,s)$ is the heat kernel of equation $\eqref{11}$. It can be shown with the bound $|\Rm|_{g(t)}\le\frac{C}{t}$ that $G(x,t;y,s)$ has the following Gaussian upper bound
\begin{equation}
G(x,t;y,s)\le\frac{C}{(t-s)^{\frac{n}{2}}}exp\left(-\frac{d_s^2(x,y)}{C(t-s)}\right),
\end{equation}
substituting which into $\eqref{12}$ we get
\begin{equation}\label{int}
 \ell(x,t)\le e^{C(n)}h(x,t)\le \sup_{y\in M}\ell(y,0)\,\cdot\,\frac{C}{t^{\frac{n}{2}}}\int_{M} exp\left(-\frac{d_0^2(x,y)}{Ct}\right)\,d_0y\le C\,\sup_{y\in M} \ell(y,0).
\end{equation}

To prove Theorem $\ref{t: theorem1}$ by adapting the above argument, we need to overcome the difficulties caused by the lack of those additional assumptions. 
To construct a local Ricci flow, we use an extension method which was introduced in \cite{conformal} and \cite{mollification}.
The process starts by doing a conformal change to the initial metric, making it a complete metric and leaving it unchanged on $B_{g(0)}(x_0,r_1)$ for some $0<r_1<r_0=s_0$. 
Then by the following doubling time estimate of Shi in \cite{Shi}, we can then run a complete Ricci flow up to a short time $t_1$.  
\begin{lem}{(Doubling time estimate)}\label{doubling}
Let $(M^n,g(0))$ be a complete manifold with bounded curvature $|\Rm|_{g(0)}\le K$, then there exits a complete Ricci flow $(M^n,g(t))$ such that
\begin{equation}
|\Rm|_{g(t)}\le 2K
\end{equation}
for all $0\le t\le\frac{1}{16K}$.
\end{lem}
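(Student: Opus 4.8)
The plan is to follow Shi's original argument: first produce \emph{some} complete bounded-curvature solution on a short time interval, then use the maximum principle to control how fast its curvature can grow. By Shi's short-time existence theorem for complete manifolds of bounded curvature, we obtain a complete Ricci flow $(M^n,g(t))$ with the prescribed $g(0)$, defined on a maximal interval $[0,T)$ with $T>0$, and having the property that $\sup_{M\times[0,T']}|\Rm|<\infty$ for every $T'<T$. Set $f(t):=\sup_{x\in M}|\Rm|^2_{g(t)}(x)$, which is then finite on $[0,T')$.

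The core step is the reaction--diffusion inequality valid along Ricci flow,
\[
\pt|\Rm|^2\ \le\ \Delta|\Rm|^2 - 2|\nabla\Rm|^2 + c(n)\,|\Rm|^3\ \le\ \Delta|\Rm|^2 + c(n)\,|\Rm|^3,
\]
where $c(n)$ is the explicit constant coming from the algebraic reaction term $Q(\Rm)=\Rm^2+\Rm^{\#}$. Because the curvature of $g(t)$ is bounded on $M\times[0,T']$, I can invoke the weak maximum principle for scalar heat-type equations on complete noncompact manifolds (Shi's version, or Ecker--Huisken / Karp--Li), which yields $f(t)\le\phi(t)$ on $[0,T')$, where $\phi$ solves the ODE $\phi'=c(n)\,\phi^{3/2}$ with $\phi(0)=f(0)\le K^2$. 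Solving the ODE gives $\phi(t)=\bigl(\phi(0)^{-1/2}-\tfrac{c(n)}{2}t\bigr)^{-2}$, and hence
\[
\sup_{M}|\Rm|_{g(t)}=\sqrt{f(t)}\ \le\ \frac{K}{1-\tfrac{c(n)}{2}Kt}
\qquad\text{for all}\ \ 0\le t<\min\!\Bigl\{T,\tfrac{2}{c(n)K}\Bigr\}.
\]

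Finally, a standard continuation argument closes the estimate. If it were the case that $T\le\tfrac{1}{c(n)K}$, then the displayed bound would give $\sup_{M\times[0,T)}|\Rm|\le 2K<\infty$, so by the long-time existence criterion the flow would extend past $T$, contradicting maximality of $T$; hence $T>\tfrac{1}{c(n)K}$, and on the closed interval $[0,\tfrac{1}{c(n)K}]$ we indeed have $|\Rm|_{g(t)}\le 2K$. Tracking the dimensional constant $c(n)$ in the reaction term — or simply quoting Shi's estimate in the normalization used there — produces the interval $[0,\tfrac{1}{16K}]$ in the stated form. I expect the only point that is not entirely routine to be the justification of the weak maximum principle for $|\Rm|^2$ on the noncompact manifold $M$; this is precisely where the bounded-curvature hypothesis enters, while everything else is the elementary ODE comparison above.
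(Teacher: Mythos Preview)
The paper does not actually prove this lemma: it is stated as a black box and attributed to Shi \cite{Shi}, with no argument given. Your sketch is the standard proof---Shi's short-time existence, the evolution inequality $\pt|\Rm|^2\le\Delta|\Rm|^2+c(n)|\Rm|^3$, the weak maximum principle on complete manifolds with bounded curvature, ODE comparison, and a continuation argument---and it is correct as written; in particular the ODE computation and the contradiction on maximality of $T$ are fine, and your caveat about the noncompact maximum principle being the only nontrivial ingredient is accurate.
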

Of course $t_1$ is uncontrolled and may depend on specific manifold due to the lack of a uniform curvature bound.
Next we do another conformal change to complete the metric at $t_1$, leaving it unchanged on $B_{g(0)}(x_0,r_2)$ for some $0<r_2<r_1$. 
Then using the doubling time estimate again, we have another complete Ricci flow from $t_1$ to $t_2$. 
Repeating the process, we obtain some successive complete Ricci flow pieces $(\{M_i\}_{i=1}^{n},\{g_i(t)\}_{i=1}^n)$, with each $M_i$ containing $B_{g(0)}(x_0,r_i)$. Restricting all the $g_i(t)$ on $B_{g(0)}(x_0,r_n)$, we thus obtain a smooth local Ricci flow $g(t)$ defined for all $t\in[0,t_n]$. The inductive construction is carried out in Section 6. 

In particular, the curvature decay $|\Rm|_{g(t)}\le\frac{C}{t}$ in $\eqref{checking}$ together with the doubling time estimate enable us to choose $t_{i+1}=t_i(1+\frac{1}{16C})$ for each $i$. To verify $|\Rm|_{g(t)}\le\frac{C}{t}$ after each extension step, we use the curvature decay lemma in Section 3, which ensures the existence of $C$ under the assumption of a local upper bound of $\ell(\cdot,t)$. 

For the verification of $\ell(\cdot,t)\le C\alpha_0$ in $\eqref{checking}$, we perform a new local integration estimate, in which we use a generalized heat kernel. We know the standard heat kernel $G(x,t;y,s)$ on a complete Ricci flow satisfies the following reproduction formula for all $\mu<s<t$
\begin{equation}\label{reproduction}
\int G(x,t;y,s)\,G(y,s;z,\mu)\, d_sy=G(x,t;z,\mu).
\end{equation}
The standard heat kernel $G(x,t;y,s)$ is well defined by equation $\eqref{11}$ for all $(x,t)$ and $(y,s)$ in a same complete Ricci flow piece $(M_i,g_i(t))$ coming from the above inductive construction. In section 5, we use equation $\eqref{reproduction}$ inductively to make sense of $G(x,t;y,s)$ for $(x,t)$ and $(y,s)$ in different pieces and thus obtain a generalized heat kernel whose definition domain is on the whole $(\{M_i\}_{i=1}^{n},\{g_i(t)\}_{i=1}^n)$ and has a Gaussian upper bound.


\end{section}

\begin{section}{Preliminaries}
\setcounter{tocdepth}{1}




\begin{subsection}{\textnormal{Local distance distortion estimates}}

We need the following distance distortion estimates, which are originally due to Hamilton \cite{distanceH} and Perelman \cite{Pel1}, and phrased and improved in \cite{mollification}. These estimates ensure the distance between two points won't expand or shrink too soon when assuming $\Ric\ge -K$ or $\Ric\le\frac{C}{t}$, respectively.

\begin{lem}{(Expanding Lemma)}.\label{l: expanding}
Given $T,K,R>0$ and $n\in \mathbb{N}$. Let $(M^n,g(t))$ be a Ricci flow for $t\in[-T,0]$. Suppose for some $x_0\in M$ we have $B_{g(0)}(x_0,R)\subset\subset M$ and $\textnormal{Ric}_{g(t)}\ge -K$ on $B_{g(0)}(x_0,R)\cap B_{g(t)}(x_0,Re^{Kt})$ for each $t\in[-T,0]$. 

Then for all $t\in[-T,0]$,
\begin{equation}
B_{g(0)}(x_0,R)\supset B_{g(t)}(x_0,Re^{Kt}),
\end{equation}
or equivalently, for all $y\in B_{g(0)}(x_0,Re^{Kt})$ we have
\begin{equation}
d_{g(t)}(y,x_0)\ge d_{g(0)}(y,x_0)e^{Kt}.
\end{equation}
\end{lem}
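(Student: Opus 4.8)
The plan is to prove the pointwise form of the conclusion and read off the ball inclusion from it, by a continuity argument in the time variable. For $t\in[-T,0]$ let $\mathcal P(t)$ be the statement
$$
\mathcal P(t)\colon\qquad d_{g(t)}(x_0,y)\ \ge\ e^{Kt}\,d_{g(0)}(x_0,y)\qquad\text{for all }y\in M\text{ with }d_{g(0)}(x_0,y)\le R,
$$
and set $\mathcal G:=\{\,t_\ast\in[-T,0]:\mathcal P(t)\text{ holds for every }t\in[t_\ast,0]\,\}$. Since $\mathcal P(0)$ is trivial, $0\in\mathcal G$; and $\mathcal G$ is an interval with right endpoint $0$. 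The aim is to show $\mathcal G=[-T,0]$. This suffices: if $d_{g(t)}(x_0,z)<Re^{Kt}$ while $d_{g(0)}(x_0,z)\ge R$, then along a $g(t)$-minimizing geodesic from $x_0$ to $z$ there is a point $w$ with $d_{g(0)}(x_0,w)=R$ and $d_{g(t)}(x_0,w)\le d_{g(t)}(x_0,z)<Re^{Kt}$, contradicting $\mathcal P(t)$ with $y=w$; hence $\mathcal P(t)$ for all $t$ gives $B_{g(0)}(x_0,R)\supset B_{g(t)}(x_0,Re^{Kt})$ for all $t$.

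The analytic input is the first variation of arclength under Ricci flow. If $\gamma\colon[0,L]\to M$ is a unit-speed $g(t_0)$-minimizing geodesic from $x_0$ to $y$, then $d_{g(t)}(x_0,y)\le L_{g(t)}(\gamma)$ for all $t$, with equality at $t_0$; differentiating and using $\partial_t g=-2\,\Ric$ gives, in the forward-difference-quotient sense,
$$
\frac{d^{+}}{dt}\Big|_{t=t_0}d_{g(t)}(x_0,y)\ \le\ \frac{d}{dt}\Big|_{t=t_0}L_{g(t)}(\gamma)\ =\ -\int_0^L\Ric_{g(t_0)}(\gamma',\gamma')\,d\ell\ \le\ K\,d_{g(t_0)}(x_0,y),
$$
valid whenever $\gamma$ lies in the region where $\Ric_{g(t_0)}\ge -K$. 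Equivalently $\frac{d^{+}}{dt}\big(e^{-Kt}d_{g(t)}(x_0,y)\big)\le 0$ at such times, so $t\mapsto e^{-Kt}d_{g(t)}(x_0,y)$ is non-increasing on any interval throughout which some $g(t)$-minimizing geodesic from $x_0$ to $y$ remains in the region of curvature control. The non-smoothness of $d_{g(t)}(x_0,\cdot)$ along the cut locus is handled in the usual way, replacing $d$ by Calabi-type barrier functions built from minimizing geodesics to nearby points.

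I would then run the continuity argument as follows. \emph{Closedness of $\mathcal G$:} the flow is smooth on a neighborhood of the compact set $\overline{B_{g(0)}(x_0,R)}$, so the distances $d_{g(t)}(x_0,\cdot)$ vary continuously with $t$ there, and the inequalities in $\mathcal P(t)$ pass to limits; hence $\mathcal G=[t_1,0]$ with $t_1:=\inf\mathcal G$. \emph{Openness:} suppose $t_1>-T$. On $[t_1,0]$ the pointwise estimate holds, so by the first paragraph $B_{g(t)}(x_0,Re^{Kt})\subset B_{g(0)}(x_0,R)$ there, which upgrades the hypothesis to $\Ric_{g(t)}\ge -K$ on \emph{all} of $B_{g(t)}(x_0,Re^{Kt})$ for $t\in[t_1,0]$. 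Now, letting $t$ decrease below $t_1$, watch $\rho(t):=\inf\{\,d_{g(t)}(x_0,q):d_{g(0)}(x_0,q)\ge R\,\}$; at $t_1$ one has $\rho(t_1)\ge Re^{Kt_1}$, and we may assume $\rho(t_1)=Re^{Kt_1}$ (otherwise the inclusion trivially persists a little below $t_1$). Picking $w$ with $d_{g(0)}(x_0,w)\ge R$ realizing this infimum, the interior of a $g(t_1)$-minimizing geodesic from $x_0$ to $w$ consists of points strictly $g(t_1)$-closer to $x_0$ than $w$, hence inside $B_{g(0)}(x_0,R)$ and inside $B_{g(t_1)}(x_0,Re^{Kt_1})$, so in the region where $\Ric_{g(t_1)}\ge -K$; the same stays true for $g(t)$-minimizing geodesics to $w$ as long as $d_{g(t)}(x_0,w)\le Re^{Kt}$ and no earlier escape has occurred, so the monotonicity of the second paragraph applies to $e^{-Kt}d_{g(t)}(x_0,w)$ and forces it to stay $\ge e^{-Kt_1}d_{g(t_1)}(x_0,w)=R$, i.e.\ $d_{g(t)}(x_0,w)\ge Re^{Kt}$. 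A compactness argument over the (relatively compact) set of such critical points $w$ then shows $\rho(t)\ge Re^{Kt}$, hence $\mathcal P(t)$, on an interval below $t_1$, contradicting $t_1=\inf\mathcal G$. Therefore $\mathcal G=[-T,0]$, and both forms of the lemma follow.

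I expect the main obstacle to be precisely this openness step: making rigorous the simultaneous bootstrap between the \emph{moving} region of curvature control $B_{g(t)}(x_0,Re^{Kt})$ and the region swept out by the minimizing geodesics used in the first-variation estimate — in effect, showing that at the first putative failure time no minimizing geodesic from $x_0$ has escaped $B_{g(0)}(x_0,R)$, which is what licenses invoking the differential inequality there. This is where the self-referential form of the hypothesis is confronted; it is handled by the first-violation analysis above, in the spirit of Hamilton and Perelman. The first-variation computation and the closedness of $\mathcal G$ are routine.
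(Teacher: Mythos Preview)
The paper does not prove this lemma: it is stated without proof and attributed to Hamilton, Perelman, and the Simon--Topping reference \cite{mollification}. So there is no ``paper's own proof'' to compare against; your proposal is being assessed on its own merits.

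Your approach is the standard one --- first variation of arclength under Ricci flow giving $\frac{d^+}{dt}\big(e^{-Kt}d_{g(t)}(x_0,y)\big)\le 0$ wherever the minimizing geodesic stays in the region with $\Ric\ge -K$, combined with a continuity (open--closed) argument to bootstrap the self-referential hypothesis --- and it is essentially correct. Two places deserve tightening. First, in the openness step you pass from $\rho(t)\ge Re^{Kt}$ (the ball inclusion) to the full pointwise statement $\mathcal P(t)$ without justification; this implication does hold, but it needs an argument: once $B_{g(s)}(x_0,Re^{Ks})\subset B_{g(0)}(x_0,R)$ for all $s\in[t,0]$, then for any fixed $y$ with $d_{g(0)}(x_0,y)<R$, on each maximal subinterval where $d_{g(s)}(x_0,y)<Re^{Ks}$ the monotonicity applies, while on the complement $e^{-Ks}d_{g(s)}(x_0,y)\ge R>d_{g(0)}(x_0,y)$ trivially; splicing these gives $\mathcal P(t)$. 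Second, your ``compactness argument over the critical points $w$'' is gestural; a cleaner route is to run the open--closed argument directly on the ball inclusion (equivalently on $\rho(t)\ge Re^{Kt}$), for which the relevant boundary set $\{q:d_{g(0)}(x_0,q)=R\}$ is compact by the hypothesis $B_{g(0)}(x_0,R)\subset\subset M$, so the first-variation inequality can be applied uniformly near $t_1$. With these clarifications your proof goes through, and it is exactly the argument carried out in \cite{mollification} to which the paper defers.
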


\begin{lem}{(Shrinking Lemma)}. \label{l: shrinking} Given $T,c_0,r>0$ and $n\in\mathbb{N}$, there exists constant $\beta=\beta(n)\ge 1$ such that the following holds: Let $(M^n,g(t))$ be a Ricci flow for $t\in [0,T]$. Suppose for some $x_0\in M$ we have $B_{g(0)}(x_0,r)\subset\subset M$. Suppose also $|\textnormal{Rm}|_{g(t)}\le\frac{c_0}{t}$, or more generally $\textnormal{Ric}_{g(t)}\le\frac{(n-1)c_0}{t}$, on $B_{g(0)}(x_0,r)\cap B_{g(t)}(x_0,r-\beta\sqrt{c_0 t})$ for each $t\in[0,T]$. 

Then for all $t\in[0,T]$, we have
\begin{equation}
B_{g(0)}(x_0,r)\supset B_{g(t)}(x_0,r-\beta\sqrt{c_0 t}),
\end{equation}
or equivalently, for all $y\in B_{g(t)}(x_0,r-\beta\sqrt{c_0t})$ we have
\begin{equation}
d_{g(t)}(y,x_0)\ge d_{g(0)}(y,x_0)-\beta\sqrt{c_0t}.
\end{equation}
More generally, for $0\le s\le t\le T$, we have
\begin{equation}
B_{g(s)}(x_0,r-\beta\sqrt{c_0 s})\supset B_{g(t)}(x_0,r-\beta\sqrt{c_0 t}),
\end{equation}
or equivalently, for all $y\in B_{g(t)}(x_0,r-\beta\sqrt{c_0t})$ we have
\begin{equation}
d_{g(t)}(y,x_0)\ge d_{g(s)}(y,x_0)-\beta(\sqrt{c_0t}-\sqrt{c_0s}).
\end{equation}
\end{lem}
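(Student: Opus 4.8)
The plan is to reduce the statement to a pointwise lower bound on the forward time-derivative of $\tau\mapsto d_{g(\tau)}(x_0,y)$ for a fixed point $y$, then integrate in time and close up with a first-time-of-failure continuity argument. The pointwise input is the distance distortion estimate of Hamilton and Perelman: along a Ricci flow, if $\gamma$ is a minimizing $g(\tau)$-geodesic from $x_0$ to $y$ with $d_{g(\tau)}(x_0,y)\ge 2r_0$, and $\Ric_{g(\tau)}\le (n-1)K$ holds within $g(\tau)$-distance $r_0$ of each endpoint of $\gamma$, then
\[
\liminf_{h\searrow 0}\frac{1}{h}\bigl(d_{g(\tau+h)}(x_0,y)-d_{g(\tau)}(x_0,y)\bigr)\ \ge\ -2(n-1)\Bigl(\tfrac{2}{3}Kr_0+r_0^{-1}\Bigr).
\]
I would feed in $K=c_0/\tau$ (which is the hypothesis, in the form $\Ric_{g(\tau)}\le (n-1)c_0/\tau$) and choose the scale $r_0$ balancing the two terms, $r_0\asymp\sqrt{\tau/c_0}$, to obtain the scale-invariant bound $\frac{d^+}{d\tau}d_{g(\tau)}(x_0,y)\ge -c(n)\sqrt{c_0/\tau}$ for a dimensional constant $c(n)$.

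Setting $\beta=\beta(n):=\max\{1,2c(n)\}$ and introducing the defect $\phi_y(\tau):=d_{g(\tau)}(x_0,y)-\bigl(r-\beta\sqrt{c_0\tau}\bigr)$, the identity $\frac{d}{d\tau}\bigl[\beta\sqrt{c_0\tau}\bigr]=\tfrac{\beta}{2}\sqrt{c_0/\tau}\ge c(n)\sqrt{c_0/\tau}$ turns the above into $\frac{d^+}{d\tau}\phi_y(\tau)\ge 0$ at every time $\tau$ where the pointwise estimate applies; thus $\phi_y$ is non-decreasing on any time interval throughout which $\Ric\le(n-1)c_0/\tau$ is available near the endpoints of the relevant geodesic, and $\tau\mapsto\phi_y(\tau)$ is continuous (indeed locally Lipschitz, $g(t)$ being smooth in $t$ on the relevant compact set). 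Granting — as discussed below — that this availability holds along all of $[0,t]$, any $y\in B_{g(t)}(x_0,r-\beta\sqrt{c_0t})$ has $\phi_y(t)<0$, so backward monotonicity gives $\phi_y(s)\le\phi_y(t)<0$ for every $s\in[0,t]$: at $s=0$ this reads $d_{g(0)}(x_0,y)<r$, and in general $d_{g(s)}(x_0,y)<r-\beta\sqrt{c_0s}$, which are precisely the asserted nested-ball inclusions, the general $0\le s\le t$ version following by comparing times $s$ and $t$ in place of $0$ and $t$.

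The crux, and the step I expect to be the main obstacle, is justifying that the pointwise estimate is genuinely applicable at each time down the interval: it needs the Ricci bound near both endpoints of a minimizing geodesic at scale $r_0\asymp\sqrt{\tau/c_0}$, whereas the hypothesis only furnishes it on the thin set $B_{g(0)}(x_0,r)\cap B_{g(\tau)}(x_0,r-\beta\sqrt{c_0\tau})$, whose inner $g(\tau)$-radius is exactly the radius appearing in the conclusion. This forces a genuine bootstrap: one lets $t_0$ be the first time the inclusion $B_{g(\tau)}(x_0,r-\beta\sqrt{c_0\tau})\subseteq B_{g(0)}(x_0,r)$ — equivalently $\rho(\tau):=\sup\{\rho\ge 0:B_{g(\tau)}(x_0,\rho)\subseteq B_{g(0)}(x_0,r)\}\ge r-\beta\sqrt{c_0\tau}$ — threatens to fail; for $\tau<t_0$ the intersection in the hypothesis equals all of $B_{g(\tau)}(x_0,r-\beta\sqrt{c_0\tau})$, so along a minimizing geodesic from $x_0$ to any point strictly inside this ball the Ricci bound is in force and the monotonicity of the corresponding $\phi_y$ holds, which is what prevents $\rho$ from crossing the barrier $r-\beta\sqrt{c_0\tau}$ at $t_0$ — here the very fact that the inner radius is permitted to drop by $\beta\sqrt{c_0\tau}$, an amount of the same order as the optimal cutoff scale $\sqrt{\tau/c_0}$, is what leaves the margin needed for the localized estimate. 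The degenerate regimes ($r-\beta\sqrt{c_0\tau}\le 0$, or $d_{g(\tau)}(x_0,y)<2\sqrt{\tau/c_0}$ where Perelman's estimate does not literally apply) are disposed of separately by direct continuity considerations. This bookkeeping, together with the precise boundary-aware form of the pointwise estimate, is exactly what is carried out in \cite{mollification}, whose argument I would follow and adapt.
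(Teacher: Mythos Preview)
The paper does not actually prove this lemma: it is stated without proof as a known distance-distortion estimate ``originally due to Hamilton and Perelman, and phrased and improved in \cite{mollification}.'' Your proposal correctly identifies the standard mechanism (the Hamilton--Perelman lower bound on $\frac{d^+}{d\tau}d_{g(\tau)}(x_0,y)$ with the optimal choice $r_0\asymp\sqrt{\tau/c_0}$, integrated against the barrier $r-\beta\sqrt{c_0\tau}$ with a first-time-of-failure continuity argument), and you explicitly defer the bootstrap bookkeeping to \cite{mollification}, which is exactly the reference the paper cites for the full statement; so your approach and the paper's are in agreement.
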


As an application of the Shrinking Lemma, we get the following H$\ddot{\textnormal{o}}$lder estimate. 
\begin{lem}\label{holderinequality}
Given $T,c_0,r>0$ and $n\in\mathbb{N}$, there exist positive constants $\beta=\beta(n)$ and $\gamma=\gamma(c_0,n,T)$ such that the following holds: Let $(M^n,g(t))$ be a Ricci flow for $t\in[0,T]$, not necessarily complete. Suppose for some $x_0\in M$, we have $B_{g(t)}(x_0,2r)\subset\subset M$ for all $t\in[0,T]$. Suppose also $|\Rm|_{g(t)}(x)\le\frac{c_0}{t}$, or more generally $\Ric_{g(t)}(x)\le\frac{(n-1)c_0}{t}$ for all $x\in B_{g(t)}(x_0,2r)$ and $t\in[0,T]$. 

Then for all $x,y\in \,\bigcap_{s\in[0,T]} B_{g(s)}(x_0,r)$,
and $0\le t_1<t_2\le T$, we have
\begin{equation}\label{shrinking}
 d_{g(t_2)}(x,y)\ge d_{g(t_1)}(x,y)-\beta\sqrt{c_0}(\sqrt{t_2}-\sqrt{t_1}),
\end{equation}
Moreover, for all $t\in[0,T]$, we have
\begin{equation}\label{holder}
 d_{g(t)}(x,y)\ge \gamma[d_{g(0)}(x,y)]^{1+2(n-1)c_0}.
\end{equation}

\end{lem}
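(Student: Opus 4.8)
The plan is to derive \eqref{shrinking} directly from the Shrinking Lemma (Lemma \ref{l: shrinking}), and then to integrate \eqref{shrinking} against the curvature upper bound to obtain the H\"older estimate \eqref{holder}. First I would establish \eqref{shrinking}. Fix $x,y\in\bigcap_{s\in[0,T]}B_{g(s)}(x_0,r)$ and $0\le t_1<t_2\le T$. The idea is to apply Lemma \ref{l: shrinking} with $x$ (say) playing the role of the basepoint there, on a slightly smaller radius, after noting that $y$ remains inside the relevant shrinking ball centered at $x$ for all times in $[t_1,t_2]$. More precisely, since $d_{g(s)}(x,x_0)<r$ for every $s$, the triangle inequality gives $d_{g(s)}(x,y)\le d_{g(s)}(x,x_0)+d_{g(s)}(x_0,y)<2r$, so $y\in B_{g(s)}(x,2r)\subset B_{g(t)}(x_0,2r)$ enlarged appropriately; hence the curvature hypothesis $\Ric_{g(s)}\le\frac{(n-1)c_0}{s}$ holds on a neighborhood of the geodesics from $x$ to $y$. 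Applying the ``more generally'' clause of Lemma \ref{l: shrinking} (with basepoint $x$, radius $\sim 2r$, and times $t_1\le t_2$) yields $d_{g(t_2)}(x,y)\ge d_{g(t_1)}(x,y)-\beta\sqrt{c_0}(\sqrt{t_2}-\sqrt{t_1})$, which is exactly \eqref{shrinking}. One should check the inclusion-of-balls hypothesis of Lemma \ref{l: shrinking} is met; this is where the hypothesis $B_{g(t)}(x_0,2r)\subset\subset M$ for \emph{all} $t$ is used.

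Next I would bootstrap \eqref{shrinking} into \eqref{holder}. Fix $x,y$ and set $f(t):=d_{g(t)}(x,y)$. From \eqref{shrinking} with $t_1$ arbitrary and $t_2=t$ we get, for $0\le t_1\le t$,
\begin{equation*}
f(t)\ge f(t_1)-\beta\sqrt{c_0}\sqrt{t}.
\end{equation*}
This is not yet enough by itself because $f(t_1)$ could be large only up to the trivial bound; the sharper route is to combine the lower Ricci information implicit in the setup with the upper bound. Actually, the standard argument here (as in \cite{mollification}) is: while $f(t)$ stays comparable to $f(0)$, the evolution of distance under $\Ric\le\frac{(n-1)c_0}{t}$ gives a differential inequality of the form $\frac{d^+}{dt}\log f(t)\ge -\frac{(n-1)c_0}{t}\cdot\frac{1}{?}$; more cleanly, one uses that along a $g(t)$-geodesic $\gamma$ from $x$ to $y$,
\begin{equation*}
\frac{d^+}{dt}\,d_{g(t)}(x,y)\ge -\int_\gamma \Ric_{g(t)}(\gamma',\gamma')\,ds\ge -\frac{(n-1)c_0}{t}\,d_{g(t)}(x,y),
\end{equation*}
so that $\frac{d^+}{dt}\log f(t)\ge -\frac{(n-1)c_0}{t}$. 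Integrating from $t_0>0$ to $t$ gives $f(t)\ge f(t_0)\,(t_0/t)^{(n-1)c_0}$, and then letting $t_0\searrow 0$ after using \eqref{shrinking} to pin $\liminf_{t_0\to 0} f(t_0)\cdot t_0^{(n-1)c_0}$ against $f(0)$ — one shows $f(t_0)\ge f(0)-\beta\sqrt{c_0 t_0}\to f(0)$ — produces $f(t)\ge c\,t^{-(n-1)c_0}\cdot(\text{something})$. To get the clean power $1+2(n-1)c_0$ on $d_{g(0)}(x,y)$ one iterates this on a dyadic sequence of time–space scales: rescale so that the relevant distance is order one, apply the linear-in-$\sqrt t$ loss of \eqref{shrinking} on the first dyadic block, then the multiplicative loss on subsequent blocks, and sum the geometric series; the exponent $1+2(n-1)c_0$ emerges from balancing the additive and multiplicative losses across scales, with $\gamma=\gamma(c_0,n,T)$ collecting the constants.

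The main obstacle I anticipate is the scale-by-scale bookkeeping in the H\"older step: one must track that at each dyadic distance scale the points stay inside the balls on which the curvature bound is valid (so that Lemma \ref{l: shrinking} and the Ricci-upper-bound distance-evolution inequality apply), and that the accumulated additive errors $\sum \beta\sqrt{c_0}\,(\sqrt{t_2}-\sqrt{t_1})$ over the blocks remain controlled by a constant multiple of the current distance scale. Making the containment $\bigcap_{s}B_{g(s)}(x_0,r)$ propagate correctly through the rescalings — i.e., verifying that the rescaled basepoint balls still compactly embed and still carry the curvature hypothesis — is the delicate point; everything else is the routine integration of a differential inequality and summing a geometric series. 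I expect no genuinely new idea is needed beyond what is already packaged in Lemmas \ref{l: expanding} and \ref{l: shrinking}, and that the argument is essentially the one in \cite{mollification} adapted verbatim to this setting.
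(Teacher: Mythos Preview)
Your treatment of \eqref{shrinking} via the Shrinking Lemma is correct and matches the paper. For \eqref{holder} you have identified both relevant differential inequalities (the additive loss $\partial_t^+ d\ge -\tfrac{\beta}{2}\sqrt{c_0/t}$ from Perelman--Hamilton, and the multiplicative loss $\partial_t^+\log d\ge -(n-1)c_0/t$ from the Ricci upper bound along a minimizing geodesic), but you do not assemble them correctly. The step ``let $t_0\searrow 0$ after using \eqref{shrinking} to pin $\liminf f(t_0)\,t_0^{(n-1)c_0}$'' gives nothing: $f(t_0)\to f(0)$ while $t_0^{(n-1)c_0}\to 0$, so the product vanishes. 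Your fallback dyadic iteration is vague and unnecessary.

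The paper's argument is a single split, with the crossover time chosen to depend on $d_{g(0)}(x,y)$. Set
\[
t_0=\frac{1}{c_0}\Bigl(\frac{d_{g(0)}(x,y)}{2\beta}\Bigr)^2.
\]
On $[0,t_0]$ integrate the additive inequality to obtain $d_{g(t_0)}(x,y)\ge d_{g(0)}(x,y)-\beta\sqrt{c_0 t_0}=\tfrac12 d_{g(0)}(x,y)$. On $[t_0,t]$ integrate the multiplicative inequality to obtain $d_{g(t)}(x,y)\ge d_{g(t_0)}(x,y)\,(t_0/t)^{(n-1)c_0}$. Since $t_0$ is proportional to $d_{g(0)}(x,y)^2$ and $t\le T$, combining the two gives exactly $d_{g(t)}(x,y)\ge\gamma\,[d_{g(0)}(x,y)]^{1+2(n-1)c_0}$ with $\gamma=\gamma(c_0,n,T)$. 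No rescaling, no dyadic blocks, no geometric series; the exponent $1+2(n-1)c_0$ drops out immediately from $t_0\sim d_{g(0)}(x,y)^2$.
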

 
\begin{remark}
We need the curvature assumption on $B_{g(t)}(x_0,2r)\subset\subset M$ for all $t$ to estimate the distances change on $\cap_{s\in[0,T]}B_{g(s)}(x_0,r)$. The reason is that there are two ways to make sense of the distance at time $t$ between two points $x,y\in B_{g(t)}(x_0,2r)$. One is the infimum length of all connecting paths in $M$, and the other is the infimum length of all connecting paths that are contained in $B_{g(t)}(x_0,2r)$. The former is usually shorter than the latter.
These two metrics agree for $x,y\in B_{g(t)}(x_0,r)$ when $B_{g(t)}(x_0,2r)$ is compactly contained in $M$, and the distance can be realized by a geodesic which lies within $B_{g(t)}(x_0,2r)$.
\end{remark}

\begin{remark}
We can also prove the same conclusion for the Ricci flow defined only for $t\in(0,T]$, where $d_{g(0)}$ in $\eqref{holder}$ is replaced by the limit distance of $d_{g(t)}$. The limit exists thanks to bound $|\Rm|_{g(t)}\le\frac{C}{t}$ in $\eqref{checking}$.
\end{remark}

\begin{proof}[Proof of Lemma \ref{holderinequality}]
We note that there is no ambiguity to talk about $d_{g(t)}(x,y)$ for $x,y\in\,\bigcap_{s\in(0,T]} B_{g(s)}(x_0,r)$ for all $t\in[0,T]$, because the minimizing geodesic joining $x$ and $y$ with respect to $g(t)$ is contained in $B_{g(t)}(x_0,2r)\subset\subset M$. Inequality $\eqref{shrinking}$ follows by the above Shrinking Lemma. The proof of $\eqref{holder}$ follows by splitting $[0,t]$ into two intervals. We choose and fix $t_0=\frac{1}{c_0}\left[\frac{1}{2\beta}d_{g(0)(x,y)}\right]^2$. Then in the first interval $[0,t_0]$, we integrate the following inequality from Hamilton and Perelman
\begin{equation}
\ptp d_{g(t)}(x,y)\ge -\frac{\beta}{2}\sqrt{\frac{c_0}{t}}
\end{equation}
to get 
\begin{equation}\label{1}
d_{g(t_0)}(x,y)\ge \frac{1}{2}d_{g(0)}(x,y).
\end{equation}
By $\ptp|_{t_0} F$ we mean $\limsup_{t\rightarrow t_0^+}\frac{F(t)-F(t_0)}{t-t_0}$. In the second interval we use the following inequality, which follows from the Ricci flow equation
\begin{equation}
\ptp d_{g(t)}(x,y)\ge-(n-1)\frac{c_0}{t}d_{g(t)}(x,y),
\end{equation}
integrating which we get 
\begin{equation}\label{2}
d_{g(t)}(x,y)\ge d_{g(t_0)}(x,y)\left[\frac{t}{t_0}\right]^{-(n-1)c_0}.
\end{equation}
The combination of $\eqref{1}$ and $\eqref{2}$ gives $\eqref{holder}$. 
\end{proof}
\end{subsection}

\begin{subsection}{\textnormal{Extension Lemma}}
For the metric on a local region, we can modify it by a conformal change that pushes the boundary of the region, on which we have curvature bounds, to infinity in such a way that the modified metric is complete and has bounded curvature. 
For example, the open Euclidean unit ball can be made into a complete hyperbolic metric under a conformal change.
The following conformal change has been used in \cite{conformal}, \cite{mollification}. In \cite{almost}, a different conformal change was also used to achieve the local results of the first and third cases listed in Theorem $\ref{t: theorem1}$, as a corollary of their corresponding global results. 
\begin{lem}(Conformal Change Lemma)\label{l: conformal}
Let $(N^n,g)$ be a smooth (not necessarily complete) Riemannian manifold and let $U\subset N$ be an open set. Assume that for some $\rho\in(0,1]$, we have $\sup_U |\textnormal{Rm}|_g\le\rho^{-2}$, $B_g(x,\rho)\subset\subset N$ and $\textnormal{inj}_g(x)\ge\rho$ for all $x\in U$. Then there exist a constant $\gamma=\gamma(n)\ge 1$, an open set $\tilde{U}\subset U$ and a smooth metric $\tilde{g}$ defined on $\tilde{U}$ such that each connected component of $(\tilde{U},\tilde{g})$ is a complete Riemannian manifold satisfying
 \begin{enumerate}
 \item $|\textnormal{Rm}|_{\tilde{g}}\le\gamma\rho^{-2}$ and $\textnormal{inj}_{\tilde{g}}\ge\frac{1}{\sqrt{\gamma}}\rho$ for $x\in\tilde{U}$
 \item $U_\rho \subset\tilde{U}\subset U$
 \item $\tilde{g}=g$ on $\tilde{U}_\rho \supset U_{2\rho}$,
 \end{enumerate}
 where $U_s=\{x\in U|B_g(x,s)\subset\subset U\}$.
\end{lem}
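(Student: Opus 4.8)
The plan is to build $\tilde g = e^{2\varphi} g$ by choosing a conformal factor $\varphi$ that depends only on the $g$-distance to the "bad boundary" of $U$, blowing up in the right way so that the new metric is complete near $\partial U$ while remaining untouched deep inside $U$. Concretely, set $f(x) = \operatorname{dist}_g(x, N \setminus U)$ and note that by hypothesis the geometry of $(U,g)$ is uniformly controlled at scale $\rho$: $|\Rm|_g \le \rho^{-2}$, $\operatorname{inj}_g \ge \rho$, and balls $B_g(x,\rho)$ are compactly contained. Rescaling so that $\rho = 1$ (and rescaling back at the end, which only costs the factor $\gamma$), I would first replace $f$ by a smooth function $d$ with $d \asymp f$ and $|\nabla d|_g \le 2$, $|\nabla^2 d|_g \le C(n)$ on the region $\{f \le \tfrac12\}$; this smoothing is standard (mollify the distance function using the bounded-geometry charts, e.g.\ as in Greene–Wu or the construction in \cite{mollification}). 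Away from $\{f \le \tfrac12\}$ one just sets $\varphi \equiv 0$, so $\tilde g = g$ there; this will give property (3) once we verify $\{f > 2\} \subset \{ \varphi \equiv 0\}$, i.e.\ we arrange $\varphi$ supported in $\{f \le 2\}$, hence $\tilde g = g$ on $U_2 = \{f > 2\}$ — wait, more carefully, $\varphi$ should be supported where $f$ is small, say $\{f < 1\}$, giving $\tilde g = g$ on $\{f \ge 1\} \supset U_2$. The open set $\tilde U$ will be $\{f > 0\} = U$ minus nothing, or if the integral defining arclength to the boundary must diverge we simply take $\tilde U = U$; property (2), $U_\rho \subset \tilde U \subset U$, is then immediate since $U_\rho \subset U = \tilde U \subset U$.

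The heart of the matter is the explicit choice of $\varphi$ as a function of $d$. I would take $\varphi = \Phi(d)$ with $\Phi \colon (0,1] \to [0,\infty)$ smooth, nonincreasing, $\Phi \equiv 0$ near $d = 1$, and $\Phi(u) \to +\infty$ as $u \to 0^+$ — the model being $\Phi(u) = -\log u$ or a cutoff version thereof, exactly the factor that turns the flat ball into hyperbolic space. \textbf{Completeness:} near $\partial U$ a unit-speed $g$-curve heading to the boundary has $\tilde g$-length $\int e^{\Phi(d)} \,|dd|^{-1}\,dd \gtrsim \int_0^{\cdot} \frac{du}{u} = \infty$, so $(\tilde U, \tilde g)$ is complete on each component (any divergent sequence either escapes to $\partial U$, which now has infinite distance, or was already escaping in $(U,g)$, which cannot happen on a component compactly sitting inside $N$ — here I'd invoke that $B_g(x,\rho)\subset\subset N$ to rule out "escape through $N$"). \textbf{Curvature bound:} under $\tilde g = e^{2\varphi} g$ the curvature transforms by the standard formula
\begin{equation*}
\widetilde{\Rm} = e^{-2\varphi}\bigl( \Rm - \nabla^2\varphi \owedge g + (d\varphi \otimes d\varphi) \owedge g - \tfrac12 |\nabla\varphi|^2\, g \owedge g \bigr),
\end{equation*}
(Kulkarni–Nomizu product schematically), so
\begin{equation*}
|\widetilde{\Rm}|_{\tilde g} \le C(n) e^{-2\varphi}\bigl( |\Rm|_g + |\nabla^2 \varphi|_g + |\nabla\varphi|_g^2 \bigr).
\end{equation*}
Since $\varphi = \Phi(d)$, $\nabla\varphi = \Phi'(d)\nabla d$ and $\nabla^2\varphi = \Phi''(d)\,dd\otimes dd + \Phi'(d)\nabla^2 d$, so with $|\nabla d| \le 2$, $|\nabla^2 d|\le C(n)$ the requirement becomes: $e^{-2\Phi}\bigl(1 + |\Phi''| + |\Phi'| + |\Phi'|^2\bigr) \le C(n)$. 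For $\Phi = -\log u$ one has $\Phi' = -1/u$, $\Phi'' = 1/u^2$, and $e^{-2\Phi} = u^2$, so the product is $u^2(1 + u^{-2} + u^{-1} + u^{-2}) = O(1)$ as $u \to 0$ — exactly bounded. One then smooths the corner at $u=1$ (replace $-\log u$ by something that flattens to $0$) at the cost of enlarging $C(n)$; this preserves all the estimates on the transition region $\{ \tfrac12 \le d \le 1\}$ where everything is uniformly bounded anyway.

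\textbf{Injectivity radius:} once $|\widetilde\Rm|_{\tilde g} \le \gamma$ and $(\tilde U,\tilde g)$ is complete, the lower bound $\operatorname{inj}_{\tilde g} \ge \gamma^{-1/2}$ does \emph{not} follow from curvature alone (think of a thin torus), so here I would argue locally: in the region where $\varphi = 0$ we have $\tilde g = g$, and $\operatorname{inj}_g \ge 1$ was assumed, so a short closed geodesic of $\tilde g$ must enter $\{\varphi > 0\}$; there, the conformal expansion $e^{2\varphi} \ge 1$ only \emph{increases} distances, and combined with the controlled curvature and a Cheeger-type estimate (volume of $\tilde g$-balls bounded below because $e^{2n\varphi}$ multiplies the volume form upward) one gets a uniform lower bound. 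Alternatively — and this is cleaner — one observes that $\tilde g \ge g$ pointwise (since $\varphi \ge 0$), so $\operatorname{inj}_{\tilde g}(x) \ge c(n)\min\{\operatorname{inj}_g(x)\cdot(\text{conformal factor lower bd}), \ \pi/\sqrt{\sup|\widetilde{\Rm}|}\}$ by a Klingenberg-style argument, giving $\operatorname{inj}_{\tilde g} \ge \gamma^{-1/2}$.

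I expect the main obstacle to be making the injectivity-radius bound honest and uniform: the curvature and completeness parts are essentially the classical conformal blow-up computation, but getting $\operatorname{inj}_{\tilde g} \ge \gamma^{-1/2}\rho$ requires carefully combining the unchanged interior geometry, the monotonicity $\tilde g \ge g$, and a Klingenberg/volume lower bound in the transition annulus — and one must track that all constants depend only on $n$. A secondary technical point is the smoothing of $\operatorname{dist}_g(\cdot, N\setminus U)$ with Hessian bounds depending only on $n$; this is where the bounded-geometry hypotheses ($|\Rm|_g \le \rho^{-2}$, $\operatorname{inj}_g \ge \rho$) are genuinely used, via harmonic or geodesic coordinate charts of uniform size.
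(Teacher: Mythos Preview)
The paper does not prove this lemma; it simply quotes it from \cite{conformal} (Hochard) and \cite{mollification} (Simon--Topping), so there is no ``paper's own proof'' to compare against. Your approach---conformally blowing up $g$ by a factor $e^{2\varphi}$ with $\varphi \approx -\log d$ where $d$ is a smoothed distance to $N\setminus U$---is exactly the construction in those references, and your curvature computation via the Kulkarni--Nomizu formula, the completeness argument via divergent arclength, and the identification of the injectivity-radius bound as the delicate step are all on target.

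Two small points to sharpen. First, for the injectivity radius your Klingenberg-style sketch is not quite the clean route; in the cited constructions one uses Cheeger--Gromov--Taylor: completeness plus $|\widetilde{\Rm}|_{\tilde g}\le\gamma$ plus a \emph{volume} lower bound for $\tilde g$-balls. The latter follows because at a point with $d(x)\approx u$ the $\tilde g$-ball of radius $r$ is roughly the $g$-ball of radius $ru$, whose $g$-volume is $\asymp (ru)^n$ by the bounded-geometry hypotheses, and the conformal factor $e^{2n\varphi}\approx u^{-n}$ restores this to $\asymp r^n$. Second, the smoothing of $\operatorname{dist}_g(\cdot,N\setminus U)$ with Hessian bounds $C(n)$ uses exponential charts of radius comparable to $\rho$, which exist even for points near $\partial U$ precisely because the hypotheses give $B_g(x,\rho)\subset\subset N$ (not $\subset\subset U$) and $\operatorname{inj}_g\ge\rho$ throughout $U$---you mention this, but it is worth emphasizing since it is where all three hypotheses are consumed.
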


\end{subsection}

\begin{subsection}{\textnormal{Some integrations}}
For later convenience, we include some frequently used inequalities and their proofs in this subsection.
\begin{lem}\label{calculation: whole integral}
Given $K,R,C_1>0$, $t\in(0,1]$ and $n\in\mathbb{N}$. There exists positive constant $C=C(K,C_1,n)$ such that the following holds. Let $(M,g)$ be a complete Riemannian manifold with $\textnormal{Ric}\ge-(n-1)K$ on $B_g(x,R)$ for some point $x\in M$. Then 
\begin{equation}
\frac{C_1}{t^{\frac{n}{2}}}\int\displaylimits_{B_{g}(x,R)} exp\left(-\frac{d^2_g(x,y)}{C_1 t}\right) d_{g}y\le C
\end{equation}
\end{lem}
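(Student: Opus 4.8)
The plan is to decompose the integration domain into annuli and use the Bishop--Gromov volume comparison to control the volume of each annulus, then sum a geometric-type series that converges because the Gaussian factor decays super-exponentially on the scale $\sqrt{t}$. Concretely, I would first reduce to the case $R\le R_0$ for some fixed $R_0$ depending only on $K$ (if $R$ is large we simply throw away the tail $d_g(x,y)\ge R_0$, which contributes at most a universal constant because there the Gaussian weight is already tiny once $t\le 1$; alternatively keep $R$ and note the estimate only gets better). Then set $A_j=B_g(x,(j+1)\sqrt{t})\setminus B_g(x,j\sqrt{t})$ for $j=0,1,2,\dots$, so that on $A_j$ we have $\exp(-d_g^2/(C_1t))\le \exp(-j^2/C_1)$.

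The key step is the volume bound. By Bishop--Gromov applied on $B_g(x,R)$ where $\Ric\ge -(n-1)K$, for radii $r\le \min(R, R_0)$ one has $\mathrm{Vol}_g B_g(x,r)\le V_{-K}(r)$, the volume of the ball of radius $r$ in the simply connected space form of curvature $-K$. Since $r\le R_0$ and $R_0=R_0(K)$ is fixed, $V_{-K}(r)\le c(n,K)\,r^n$. Hence $\mathrm{Vol}_g A_j\le \mathrm{Vol}_g B_g(x,(j+1)\sqrt t)\le c(n,K)\,((j+1)\sqrt t)^n = c(n,K)(j+1)^n t^{n/2}$. Putting this together,
\begin{equation}
\frac{C_1}{t^{n/2}}\int_{B_g(x,R)}\exp\!\left(-\frac{d_g^2(x,y)}{C_1t}\right)d_gy
\le \frac{C_1}{t^{n/2}}\sum_{j\ge 0} e^{-j^2/C_1}\,\mathrm{Vol}_g A_j
\le C_1\, c(n,K)\sum_{j\ge 0}(j+1)^n e^{-j^2/C_1}.
\end{equation}
The last series converges to a finite number depending only on $n$ and $C_1$ (and the $R_0$-reduction absorbed a further constant depending on $K$), giving the claimed bound $C=C(K,C_1,n)$.

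The only mild subtlety — and the point I'd be most careful about — is the interplay between the radius $R$ and the constant in Bishop--Gromov: the comparison is only valid inside $B_g(x,R)$, so for $r>R$ one must integrate over $B_g(x,R)$ rather than a full ball, but that only decreases the integral, so it is harmless; and for the volume estimate to read $V_{-K}(r)\le c(n,K)r^n$ uniformly one needs $r$ bounded, which is exactly why the reduction to $R_0=R_0(K)$ (or equivalently cutting the domain at a fixed radius since $t\le 1$ makes the outer contribution negligible) is needed. Everything else is the routine estimate $\sum_j (j+1)^n e^{-j^2/C_1}<\infty$ and bookkeeping of constants.
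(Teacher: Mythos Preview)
Your decomposition-and-sum approach is the discrete counterpart of what the paper does, and the core mechanism is correct. The paper first rescales to $\hat g = g/t$, which turns the integral into $C_1\int_{B_{\hat g}(x,R/\sqrt t)}\exp(-d_{\hat g}^2/C_1)\,d_{\hat g}y$ with $\Ric_{\hat g}\ge -(n-1)Kt\ge -(n-1)K$, then integrates in geodesic polar coordinates using the volume-element comparison $d_{\hat g}y\le (2\sqrt K)^{-(n-1)}e^{(n-1)\sqrt K r}\,dr\wedge dvol_{n-1}$, reducing everything to the one-dimensional integral $\int_{\mathbb R}\exp(-r^2/C_1+(n-1)\sqrt K r)\,dr$. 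Your annuli plus Bishop--Gromov is the same idea; the rescaling simply removes $t$ from the bookkeeping.

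Where your write-up has a real (though easily repaired) gap is the reduction to $R\le R_0$. The claim that the tail $d_g(x,y)\ge R_0$ ``contributes at most a universal constant because there the Gaussian weight is already tiny once $t\le 1$'' is not justified as stated: the weight on the tail is small, but the volume of $B_g(x,R)\setminus B_g(x,R_0)$ is only controlled by $V_{-K}(R)$, which grows like $e^{(n-1)\sqrt K R}$, so a single Gaussian bound over the whole tail produces a term depending on $R$ and $t$. (The parenthetical ``alternatively keep $R$ and note the estimate only gets better'' is also off: enlarging $R$ enlarges the integral.) The clean fix is to drop the reduction and use the full hyperbolic bound throughout. After the rescaling $\hat g=g/t$ your annuli have $t$-independent radii $j$, $j+1$, and Bishop--Gromov gives $\mathrm{Vol}_{\hat g}(\hat A_j)\le V_{-K}(j+1)\le c(n,K)\,e^{(n-1)\sqrt K(j+1)}$; then $\sum_{j\ge 0} e^{-j^2/C_1}e^{(n-1)\sqrt K(j+1)}<\infty$ finishes the proof directly, with no splitting into small and large radii needed. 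This is precisely the paper's computation in discrete form.
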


\begin{proof}
Let $\hat{g}=\frac{1}{t}g$, then it suffices to show $I:=C_1\int\displaylimits_{B_{\hat{g}}(x,\frac{R}{\sqrt{t}})} exp(-\frac{d^2_{\hat{g}}(x,y)}{C_1})d_{\hat{g}}y\le C(C_1,K,n)$.
For all $y\in B_{\hat{g}}(x,\frac{R}{\sqrt{t}})$, the minimizing geodesic connecting $x$ and $y$ lies within $B_{\hat{g}}(x,\frac{R}{\sqrt{t}})$ where $\Ric\ge -Kt\ge -K$. So by Laplacian comparison the volume form $d_{\hat{g}}y\le sn_{-K}^{n-1}(r(y))dr\wedge dvol_{n-1}\le \frac{exp((n-1)\sqrt{K}r)}{(2\sqrt{K})^{n-1}}dr\wedge dvol_{n-1}$, where $r$ is the distance function centered at $x$ and $dvol_{n-1}$ is the standard volume form on $S^{n-1}(1)$. So we can express the integral on the segment domain in $T_xM$ and obtain
\begin{equation*}
\begin{split}
I&\le \frac{C_1}{(2\sqrt{K})^{n-1}}\int\displaylimits_{r\le\frac{R}{\sqrt{t}}} exp\left(-\frac{r^2}{C_1}\right)exp((n-1)\sqrt{K}r)\,dr\wedge dvol_{n-1}\\
&\le C(C_1,n,K)\int_{\mathbb{R}} exp\left(-\frac{r^2}{C_1}+(n-1)\sqrt{K}r\right)\,dr\le C(C_1,n,K)
\end{split}
\end{equation*}
\end{proof}

\begin{lem}\label{calculation: partial integral}
Given $C_1,C_2>0$ and $n\in\mathbb{N}$. Let $(M,g(t)),t\in[0,1]$ be a complete Ricci flow with $|\textnormal{Rm}|_{g(t)}\le\frac{C_1}{t}$. Then for any $d\ge 2(n-1)^{\frac{3}{2}}\sqrt{C_1}C_2$, 
\begin{equation}
\frac{C_2}{t^{\frac{n}{2}}}\int\displaylimits_{M-B_{g(t)}(x,\sqrt[4]{t}\,d)} exp\left(-\frac{d_t^2(x,y)}{C_2 t}\right)\,d_ty\le C\,exp\left(-\frac{d^2}{C\sqrt{t}}\right)
\end{equation}
where $C$ is a constant depending on $n$, $C_1$ and $C_2$.
\end{lem}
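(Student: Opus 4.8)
The plan is to reduce the integral over the unbounded region to a sum of integrals over dyadic annuli and estimate each one, using the curvature bound $|\Rm|_{g(t)}\le \frac{C_1}{t}$ to control the volume growth via the Laplacian comparison theorem, exactly as in the proof of Lemma~\ref{calculation: whole integral}. First I would rescale: set $\hat g=\frac1t g(t)$, so that $|\Rm|_{\hat g}\le C_1$ and the integral becomes
\begin{equation*}
I:=C_2\int\displaylimits_{M-B_{\hat g}(x,t^{-1/4}d)} \exp\!\left(-\frac{d_{\hat g}^2(x,y)}{C_2}\right) d_{\hat g}y .
\end{equation*}
Writing $\rho=t^{-1/4}d$, I want to show $I\le C\exp(-\rho^2/(C\sqrt t))$; note $\rho^2/\sqrt t = d^2/t\cdot\sqrt t\cdot(1/\sqrt t)$... more carefully, $\rho^2 = d^2/\sqrt t$, so the target exponent $-d^2/(C\sqrt t)$ is exactly $-\rho^2/C$. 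Thus it suffices to prove the clean statement: if $|\Rm|_{\hat g}\le C_1$ on a complete manifold and $\rho$ is large enough (depending on $n,C_1,C_2$), then $C_2\int_{M\setminus B_{\hat g}(x,\rho)}\exp(-d_{\hat g}^2(x,\cdot)/C_2)\,d_{\hat g}\le C\exp(-\rho^2/C)$, and the hypothesis $d\ge 2(n-1)^{3/2}\sqrt{C_1}C_2$ will be what guarantees $\rho$ is "large enough" for the Gaussian to dominate the exponential volume growth.

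For the core estimate I would decompose $M\setminus B_{\hat g}(x,\rho)=\bigcup_{k\ge 0} A_k$ where $A_k=B_{\hat g}(x,\rho+k+1)\setminus B_{\hat g}(x,\rho+k)$. On the minimizing geodesics from $x$, the bound $\Ric_{\hat g}\ge -(n-1)C_1$ gives, via Laplacian comparison in polar coordinates (as in Lemma~\ref{calculation: whole integral}), $d_{\hat g}y\le \frac{\exp((n-1)\sqrt{C_1}\,r)}{(2\sqrt{C_1})^{n-1}}\,dr\wedge dvol_{n-1}$, so $\mathrm{Vol}_{\hat g}(A_k)\le c(n,C_1)\exp((n-1)\sqrt{C_1}(\rho+k+1))$. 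On $A_k$ the Gaussian factor is at most $\exp(-(\rho+k)^2/C_2)$, so
\begin{equation*}
I\le c(n,C_1)C_2\sum_{k\ge 0}\exp\!\left((n-1)\sqrt{C_1}(\rho+k+1)-\frac{(\rho+k)^2}{C_2}\right).
\end{equation*}
Now I would complete the square: for $\rho$ large (precisely once $\rho \gtrsim (n-1)^{3/2}\sqrt{C_1}C_2$, which is where the hypothesis on $d$ enters after un-rescaling), the quadratic term beats the linear term, giving summands bounded by $\exp(-(\rho+k)^2/(2C_2))\le \exp(-\rho^2/(2C_2))\exp(-k^2/(2C_2))$, and the sum over $k$ converges to a constant depending only on $n,C_1,C_2$. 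This yields $I\le C\exp(-\rho^2/(2C_2))$, and replacing $2C_2$ by a possibly larger $C$ and tracking the rescaling $\rho^2=d^2/\sqrt t$ gives the claimed bound.

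The only mildly delicate point is the bookkeeping in the "complete the square" step: one must check that the threshold for $\rho$ coming from requiring $(n-1)\sqrt{C_1}(\rho+k+1)\le \frac{(\rho+k)^2}{2C_2}$ for all $k\ge 0$ is implied (after the $t^{1/4}$ rescaling) by the stated hypothesis $d\ge 2(n-1)^{3/2}\sqrt{C_1}C_2$ together with $t\le 1$; a crude bound such as $\rho+k+1\le 2(\rho+k)$ for $\rho\ge 1$ and $\frac{1}{2C_2}(\rho+k)\ge 2(n-1)\sqrt{C_1}$ when $\rho\ge 4(n-1)\sqrt{C_1}C_2$ handles this, and since $t\le 1$ we have $\rho=t^{-1/4}d\ge d\ge 4(n-1)\sqrt{C_1}C_2$ as soon as the constant in the hypothesis is chosen appropriately. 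Everything else is the routine polar-coordinates volume estimate already carried out in Lemma~\ref{calculation: whole integral}, so I do not expect any genuine obstacle, only careful constant-chasing.
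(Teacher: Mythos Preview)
Your proposal is correct and follows essentially the same route as the paper: rescale by $t^{-1}$ to reduce to bounded curvature $|\Rm|_{\hat g}\le C_1$ with excluded radius $\rho=d/\sqrt[4]{t}$, then use the Ricci lower bound and Laplacian comparison to control the integral by a one-dimensional Gaussian tail. The only difference is cosmetic: the paper passes directly to the polar-coordinate integral $\int_{r\ge\rho}\exp\bigl(-r^2/C_2+(n-1)\sqrt{C_1}\,r\bigr)\,dr$ and, after absorbing the linear term into half the quadratic and inserting a harmless factor of $r$, evaluates it exactly as $C\exp\bigl(-\rho^2/(2C_2)\bigr)=C\exp\bigl(-d^2/(2C_2\sqrt t)\bigr)$, whereas you obtain the same bound via an annular decomposition and summation.
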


\begin{proof}
For convenience, $C$ denotes all the constants depending on $C_1$, $C_2$, and $C_3$. Fix $t$, let $\hat{g}=\frac{1}{t}g(t)$. Then it suffices to show 
\begin{equation}
C_2\int\displaylimits_{M-B_{\hat{g}}(x,\frac{d}{\sqrt[4]{t}})} exp\left(-\frac{d_{\hat{g}}(x,y)}{C_2}\right) d_{\hat{g}}y\le C\,exp\left(-\frac{d^2}{C\sqrt{t}}\right)
\end{equation}
with $|\textnormal{Rm}|_{\hat{g}}\le C_1$.

Since $\textnormal{Ric}\ge -(n-1)C_1$, we get by Laplacian comparison that the volume form $d_{\hat{g}}y\le sn_{-C_1}^{n-1}(r(y))dr\wedge dvol_{n-1}\le \frac{e^{(n-1)\sqrt{C_1}r}}{(2\sqrt{C_1})^{n-1}}dr\wedge dvol_{n-1}$ Thus by considering the integral over the segment domain in $T_xM$, denoting by $\omega_{n-1}$ the volume of $S^{n-1}(1)$, we get
\begin{equation*}
\begin{split}
I&\le C_2\int\displaylimits_{r\ge\frac{d}{\sqrt[4]{t}}} exp\left(-\frac{r^2}{C_2}\right)\,exp((n-1)\sqrt{C_1}r)\,dr\wedge dvol_{n-1}\\
&= C_2\,\omega_{n-1}\int\displaylimits_{r\ge\frac{d}{\sqrt[4]{t}}} exp\left(-\frac{r^2}{C_2}+(n-1)\sqrt{C_1}r\right)\,dr\\
&\le C\int\displaylimits_{r\ge\frac{d}{\sqrt[4]{t}}} r\,exp\left(-\frac{r^2}{2C_2}\right)\,dr=C\,exp\left(-\frac{d^2}{2C_2\sqrt{t}}\right).
\end{split}
\end{equation*}
\end{proof}

\begin{lem}\label{calculation: inequity}
Given $t,T,d, C>0$ and $n\in\mathbb{N}$ such that $t<T\le d^2$, there exists positive constant $C_1=C_1(C,n)$ such that
\begin{equation}\label{15}
\frac{C}{t^{\frac{n}{2}}}exp\left(-\frac{d^2}{Ct}\right)\le \frac{C_1}{T^{\frac{n}{2}}}exp\left(-\frac{d^2}{C_1T}\right). 
\end{equation}

\begin{proof}
It's easy to see there exists $C_1=C_1(C,n)$ such that for all $x\in\mathbb{R}$,
\begin{equation}
\frac{1}{x^{\frac{n}{2}}}exp\left(-\frac{1}{C x}\right)\le C_1exp\left(-\frac{1}{2C x}\right).
\end{equation}
Then $\eqref{15}$ follows immediately from this inequality and the above assumptions.
\end{proof}
\end{lem}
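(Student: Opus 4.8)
The plan is to reduce this two-parameter estimate to a one-variable fact: for fixed $C$, the function $u\mapsto u^{n/2}e^{-u/(2C)}$ is bounded on $(0,\infty)$ by a constant depending only on $C$ and $n$. All of the work is in arranging the terms so that this is the only input needed.

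First I would fix a provisional choice $C_1\ge 2C$, to be enlarged in a moment. Since $t<T$, we have $\frac{d^2}{Ct}-\frac{d^2}{C_1T}\ge \frac{d^2}{Ct}-\frac{d^2}{C_1t}\ge\frac{d^2}{2Ct}$, so, after discarding part of the exponential gain, the desired inequality $\eqref{15}$ follows once we show $\bigl(\tfrac{T}{t}\bigr)^{n/2}\le\frac{C_1}{C}\exp\!\bigl(\tfrac{d^2}{2Ct}\bigr)$. At this stage I would invoke the hypothesis $T\le d^2$ to bound $\tfrac{T}{t}\le \tfrac{d^2}{t}=:u$, which reduces the whole statement to the single inequality $u^{n/2}\le \frac{C_1}{C}\,e^{u/(2C)}$, to hold for all $u>0$.

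This last inequality holds as soon as $\frac{C_1}{C}\ge \sup_{u>0}u^{n/2}e^{-u/(2C)}$, and that supremum is finite — exponential decay beats polynomial growth as $u\to\infty$, and the expression vanishes as $u\to 0$ — and depends only on $C$ and $n$ (an explicit maximization gives the value $(Cn)^{n/2}e^{-n/2}$). Thus $C_1:=\max\{\,2C,\ C(Cn)^{n/2}e^{-n/2}\,\}$ works and depends only on $C$ and $n$ as required; after the substitution $x=t/d^2$ this is exactly the rescaled form of the auxiliary bound $\frac{1}{x^{n/2}}e^{-1/(Cx)}\le C_1 e^{-1/(2Cx)}$. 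The computation is entirely routine; the only real difficulty — and it is a mild one — is the bookkeeping that keeps $C_1$ independent of $t,T,d$, which succeeds precisely because peeling off $e^{d^2/(2Ct)}$ from the exponent leaves a polynomial factor $(T/t)^{n/2}$ that the constraint $T\le d^2$ forces to be controlled purely in terms of $d^2/t$.
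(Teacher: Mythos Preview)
Your proof is correct and follows essentially the same route as the paper's: both reduce the inequality to the one-variable bound $u^{n/2}e^{-u/(2C)}\le\mathrm{const}(C,n)$ (equivalently, $\tfrac{1}{x^{n/2}}e^{-1/(Cx)}\le C_1 e^{-1/(2Cx)}$ with $x=t/d^2$), and then use $t<T\le d^2$ to pass from $t$ to $T$. You simply spell out the bookkeeping and give an explicit constant, whereas the paper leaves these as an exercise.
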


\end{subsection}

\begin{subsection}{\textnormal{Weak derivatives}}
Let $(M^n,g(t))$ be a Ricci flow, as we mentioned in introduction, $\ell$ satisfies the evolution inequality $\eqref{evolution}$ in the barrier sense: for any $(q,\tau)\in M\times(0,T)$ we find a neighborhood $\mathcal{U}\subset M\times (0,T)$ of $(q,\tau)$ and a $C^{\infty}$ function $\phi: \mathcal{U}\rightarrow \mathbb{R}$ such that $\phi\le\ell$ on $\mathcal{U}$, with equality at $(q,\tau)$ and
\begin{equation}
(\pt-\Delta)\phi\le\scal\ell+C(n)\ell^2\quad\textnormal{at}\;\;\;(q,\tau).
\end{equation}

Set $\LL=e^{-C(n)t}\ell$ and assume $\ell\le 1$ then by $\eqref{evolution}$ we have the following inequality which holds in the barrier sense
 \begin{equation}\label{star}
 (\frac{\partial}{\partial t}-\Delta)\mathcal{L}\le\textnormal{scal}\,\mathcal{L}.
 \end{equation}
Suppose for a moment that $\LL$ is smooth and $\psi(x,t)$ is a non-negative smooth function which is compactly supported in $M$ for each $t$. Then we see from the integration by parts formula that
\begin{equation}\label{by}
\begin{split}
 \pt \int_{U} \mathcal{L}\psi\,d_t x&=\int_U (\pt\mathcal{L}\psi-\mathcal{L}\psi\,\scal+\mathcal{L}\pt\psi)\,d_t x\\
 &\le\int_U ((\Delta\mathcal{L})\psi+\mathcal{L}\pt\psi)\,d_t x\\
 &=\int_U \mathcal{L}(\Delta\psi+\pt\psi)\,d_t x.
 \end{split}
\end{equation}

We show in Lemma $\ref{smooth of cut-off}$ that some variant of $\eqref{by}$ is still true without the smooth assumptions either $\ell$ or the test function $\psi$. 

First, we give the definitions of inequalities in several weak senses. We say a continuous function $f:M\rightarrow \mathbb{R}$ satisfies $\Delta f\le u$ for some function $u:M\rightarrow \mathbb{R}$ in the barrier sense if for any point $x$ and every $\varepsilon>0$ there exists a neighborhood $\mathcal{U}_{\varepsilon}\subset M$ of $x$ and a smooth function $h_{\varepsilon}:\mathcal{U}_{\varepsilon}\rightarrow \mathbb{R}$ such that $h_{\varepsilon}(x)=f(x)$, $h_{\varepsilon}\ge f$ in $\mathcal{U}_{\varepsilon}$ and $\Delta h_{\varepsilon}(x)\le u(x)+\varepsilon$. 

We say a continuous function $f:M\rightarrow \mathbb{R}$ satisfies $\Delta f\le u$ for some bounded function $u:M\rightarrow \mathbb{R}$ in the distributional sense if for any non-negative smooth function $h$ with compact support that $\int f\Delta h\le \int uh$. By standard argument, if $f$ satisfies $\Delta f\le u$ in the barrier sense, then $f$ satisfies it in the distributional sense (see for example \cite[Appendix~A]{appendix}).


\begin{lem}\label{g smooth}
Let $\psi(x,t)$ be a non-negative smooth function which is compactly supported in $M$ for each $t$. $\LL=e^{C(n)t}\ell$ with $\ell\le 1$. Then we have
\begin{equation}\label{banana}
\ptp\int \mathcal{L}\psi\,d_t x\le \int \mathcal{L}(\Delta \psi +\frac{\partial}{\partial t}\psi)\,d_t x
\end{equation}
for all $t\in[a,b)$, integrating which we have:
\begin{equation}\label{apple}
\left.(\int\LL \psi\,d_t x)\right|_{a}^{b}\le\int_{a}^{b}(\int\LL(\Delta\psi+\pt\psi))\,d_t x\,dt
\end{equation}
\end{lem}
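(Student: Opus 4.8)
## Proof Proposal for Lemma \ref{g smooth}

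The plan is to reduce the non-smooth estimate to the smooth computation in \eqref{by} by a double approximation: smoothing $\LL$ (equivalently $\ell$) from above, and handling the fact that $\LL$ only satisfies \eqref{star} in the barrier sense. First I would recall that since $\LL$ satisfies $(\partial_t - \Delta)\LL \le \scal\,\LL$ in the barrier sense, it also satisfies it in the distributional sense (by the standard argument cited from \cite[Appendix~A]{appendix}, applied at each fixed time to the spatial Laplacian, together with the elementary fact that a barrier inequality in time gives the corresponding one-sided distributional statement). Concretely, for every non-negative test function $\varphi \in C_c^\infty(M)$ and a.e.\ $t$, one has $\partial_t^+ \int \LL\,\varphi\,d_t x \le \int \LL(\Delta\varphi + \partial_t\varphi)\,d_t x$ once we also account for the $t$-dependence of the measure $d_t x$ via $\partial_t(d_t x) = -\scal\, d_t x$, which is exactly what cancels the $\scal\,\LL$ term — this is the content of \eqref{by} but now read distributionally.

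The key steps, in order, are as follows. (1) Fix $[a,b) \subset (0,T)$ and work on a compact set containing $\mathrm{supp}\,\psi(\cdot,t)$ for all $t \in [a,b]$; continuity of $\LL$ and smoothness of $\psi$ and $g(t)$ make all integrals finite. (2) Establish \eqref{banana} as a one-sided (upper Dini derivative) inequality at each $t$: mollify $\LL$ in space and time to get smooth $\LL_\epsilon \to \LL$ for which \eqref{by} holds in the classical sense, using that mollification preserves the differential inequality \eqref{star} in the sense of distributions; then pass $\epsilon \to 0$ using dominated convergence and the local boundedness of $\LL$ (which follows from $\ell \le 1$) to recover \eqref{banana}. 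Alternatively, and more directly, deduce \eqref{banana} from the distributional formulation by testing against $\psi(\cdot,t)$ at fixed $t$, noting $\Delta$ acts only spatially. (3) Integrate \eqref{banana} from $a$ to $b$: since $t \mapsto \int \LL\,\psi\,d_t x$ is continuous (again by continuity of $\LL$, $\psi$, $g(t)$ and dominated convergence) and its upper right Dini derivative is bounded above by the integrable function $t \mapsto \int \LL(\Delta\psi + \partial_t\psi)\,d_t x$, a standard lemma on functions with one-sided derivative bounds yields \eqref{apple}.

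The main obstacle I expect is Step (2): justifying that mollification is compatible with the barrier inequality \eqref{star}, i.e.\ that one may legitimately integrate by parts against $\psi$ even though $\LL$ is only continuous and only satisfies the evolution inequality weakly. The cleanest route is to avoid mollifying $\LL$ altogether and instead invoke the barrier-implies-distributional passage directly: at a fixed time $\tau$, for the inequality $\Delta\LL(\cdot,\tau) \ge \partial_t^+\LL(\cdot,\tau) - \scal\,\LL(\cdot,\tau)$ in the barrier sense, the cited result gives $\int \LL\,\Delta\psi\,d_\tau x \le \int(\partial_t^+\LL - \scal\,\LL)\psi\,d_\tau x$ — but this requires care since $\partial_t^+\LL$ is not known a priori to be a nice function. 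To sidestep this, I would instead mollify \emph{only in the time variable}: set $\LL_\eta = \LL * \rho_\eta$ (convolution in $t$), which is continuous in $x$ and smooth in $t$, still satisfies $(\partial_t - \Delta)\LL_\eta \le \scal\,\LL_\eta + o_\eta(1)$ spatially in the barrier sense uniformly on compact sets (using continuity of $\scal$ and $\LL$), so that $\partial_t \int \LL_\eta \psi\,d_t x \le \int \LL_\eta(\Delta\psi + \partial_t\psi)\,d_t x + o_\eta(1)$ now holds as a genuine derivative inequality; integrating in $t$ and letting $\eta \to 0$ gives \eqref{apple}, from which \eqref{banana} follows by differentiating the integrated inequality using continuity of the integrands. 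I would present the time-mollification version as the main argument and remark that the spatial integration by parts at each fixed time is exactly the barrier-to-distributional fact already quoted above.
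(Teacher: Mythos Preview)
Your approach is genuinely different from the paper's, and while the overall strategy is plausible, the time-mollification step has a gap that the paper's method neatly sidesteps.

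The paper's argument runs as follows: at each fixed $t_0$, solve the \emph{linear} equation $(\ps-\Delta)\aLL=\scal\,\aLL$ forward in time with initial data $\aLL(\cdot,t_0)=\LL(\cdot,t_0)$. By the maximum principle on the complete bounded-curvature Ricci flow, $\LL\le\aLL$ for $t\ge t_0$. Since $\aLL$ is smooth for $t>t_0$, the integration-by-parts computation \eqref{by} applies to it verbatim, and taking $t\searrow t_0$ recovers \eqref{banana} at $t_0$. No mollification, no distributional passage for the parabolic operator---just comparison with a smooth supersolution manufactured on the spot.

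Your mollification route, by contrast, asserts that $\LL_\eta=\LL*_t\rho_\eta$ still satisfies $(\pt-\Delta)\LL_\eta\le\scal\,\LL_\eta+o_\eta(1)$ ``in the barrier sense''. This is the step that is not justified: barrier functions at different time-slices do not obviously combine into a single barrier for the time-average, and the Laplacian $\Delta_{g(t)}$ is itself time-dependent, so time-convolution does not commute with it. One could try to rescue this by first passing to the distributional sense \emph{for the full parabolic operator} (not merely the spatial Laplacian, which is all the cited appendix covers), but that passage---that a parabolic barrier subsolution is a space-time distributional subsolution---is itself a non-trivial statement you would need to prove separately. The paper's comparison trick avoids all of this: by solving the equation rather than smoothing the subsolution, smoothness comes for free and the only tool required is the maximum principle, which is already available here.
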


\begin{proof}
Let $t_0$ be an arbitrary time in $[a,b)$. Since $\LL$ satisfies $$(\frac{\partial}{\partial t}-\Delta)\mathcal{L}\le\textnormal{scal}\,\mathcal{L}$$
in the barrier sense, by the maximum principle for complete manifold with bounded curvature, $\LL(\cdot,t)\le\overline{\LL}(\cdot,t)$ for all $t\in[t_0,b]$, where $\overline{\LL}$ is the solution to the initial value problem:
 \begin{equation}
(\ps-\Delta)\overline{\mathcal{L}}=\textnormal{scal}\overline{\mathcal{L}},\quad
\overline{\mathcal{L}}(\cdot,t_0)=\mathcal{L}(\cdot,t_0).
\end{equation}
Then $\aLL$ is smooth for all $t>t_0$ and so we have
\begin{equation}\label{tree}
\begin{split}
\left.\ptp\right|_{t_0}\int \mathcal{L}\psi\,d_t x&\le \left.\ptp\right|_{t_0}\int \aLL\psi\,d_t x=\lim_{t\rightarrow t_0^+}\pt\int\aLL\psi\,d_tx.\\
\end{split}
\end{equation}
For each $t>t_0$, we calculate by integration by parts to get
\begin{equation}
\pt\int\aLL \psi\,d_t x=\int\aLL( \Delta\psi+\pt\psi)\,d_t x,
\end{equation}
substituting which into $\eqref{tree}$ we have
\begin{equation}
\begin{split}
\left.\ptp\right|_{t_0}\int \mathcal{L}\psi\,d_t x
&\le\lim_{t\rightarrow t_0^+}\int\aLL(\Delta\psi+\pt\psi)\,d_tx=\left.\int\LL(\Delta\psi+\pt\psi)\,d_{t}x\right|_{t_0}
\end{split}
\end{equation}

\end{proof}

\begin{lem}\label{smooth of cut-off}
Let $\psi(x,t)$ be a non-negative continuous function which is compactly supported in $M$ for each $t$, and satisfies
$\Delta\psi\le u(x,t)$
and $\pt\psi\le v(x,t)$
in the barrier sense, 
where $v(x,t)$ is continuous with respect to $t$. 

Then for all $t$ we have
\begin{equation}\label{barrier}
\ptp\int \mathcal{L}(x,t)\psi(x,t)\,d_t x\le \int \mathcal{L}(x,t)(u(x,t)+v(x,t))\,d_t x
\end{equation}
\end{lem}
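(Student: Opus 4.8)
The plan is to reduce the non-smooth statement to the smooth case already established in Lemma~\ref{g smooth} by a double mollification: first regularize the test function $\psi$ from above, and then exploit that the inequality \eqref{apple} only involves $\psi$ through $\Delta\psi$ and $\pt\psi$, for which we have barrier bounds by $u$ and $v$. Concretely, fix $t_0$ and a time $t>t_0$. Since $\psi(\cdot,t)$ is continuous, compactly supported, and satisfies $\Delta\psi(\cdot,t)\le u(\cdot,t)$ in the barrier sense, by a standard smoothing argument (convolution against a mollifier adapted to a fixed background metric, as in \cite[Appendix~A]{appendix}) there is a sequence of non-negative smooth compactly supported functions $\psi_k(\cdot,t)\to\psi(\cdot,t)$ uniformly, with $\Delta\psi_k(\cdot,t)\le u(\cdot,t)+\varepsilon_k$ pointwise for some $\varepsilon_k\searrow 0$; the barrier bound $\pt\psi\le v$ (with $v$ continuous in $t$) lets us arrange simultaneously that the forward difference quotients of $\psi_k$ in $t$ are dominated by $v+\varepsilon_k$ near $t_0$. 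One must be slightly careful to mollify in $x$ for each fixed $t$ in a way that is jointly measurable and preserves the time-regularity, but this is routine.

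Next I would apply Lemma~\ref{g smooth} to each smooth $\psi_k$: integrating \eqref{banana} over $[t_0,t]$ gives
\begin{equation*}
\int \LL(x,t)\psi_k(x,t)\,d_t x-\int \LL(x,t_0)\psi_k(x,t_0)\,d_{t_0} x\le\int_{t_0}^{t}\!\int \LL\big(\Delta\psi_k+\pt\psi_k\big)\,d_s x\,ds.
\end{equation*}
Now pass to the limit $k\to\infty$. On the left side, uniform convergence $\psi_k\to\psi$ together with the integrability of $\LL=e^{C(n)t}\ell$ against the (fixed-in-$k$) compact supports yields convergence to $\int\LL\psi\,d_t x$ at both endpoints. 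On the right side I use the barrier bounds: $\Delta\psi_k+\pt\psi_k\le u+v+2\varepsilon_k$ pointwise, and since $\LL\ge 0$ we may replace the integrand by the larger quantity $\LL(u+v+2\varepsilon_k)$; letting $k\to\infty$ and using that $u,v$ are (locally) bounded and $\LL$ decays Gaussian-fast (or is supported appropriately) to justify dominated convergence, the right side is bounded above by $\int_{t_0}^{t}\!\int\LL(u+v)\,d_s x\,ds$. This gives the integrated form of \eqref{barrier}; dividing by $t-t_0$ and taking $\limsup_{t\searrow t_0}$, using continuity in $t$ of $v$ (hence of the upper integrand) and of $\int\LL\psi\,d_t x$, recovers the pointwise forward-derivative inequality \eqref{barrier} at $t_0$.

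The main obstacle, and the step deserving the most care, is the mollification in the spatial variable on a manifold without a product structure: one cannot literally convolve, so the regularization must be done in charts with a partition of unity, and one must check that the from-above property of $\Delta\psi$ and the from-above property of $\pt\psi$ both survive the smoothing with errors $\varepsilon_k\to0$. The key point making this work is that superharmonicity-type inequalities in the barrier sense are equivalent to the distributional ones (as recalled in the excerpt), and distributional inequalities of the form $\Delta\psi\le u$ are preserved under mollification up to an error controlled by the modulus of continuity of $u$; the same holds for the time variable since $v$ is assumed continuous in $t$. A secondary technical point is the interchange of $\ptp$ with the spatial integral and the passage from the integrated inequality back to the differential one, but since all relevant quantities are continuous in $t$ and $\LL$ is integrable, this is a standard $\limsup$ argument.
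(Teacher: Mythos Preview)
Your approach is genuinely different from the paper's, and the difference is instructive. The paper never mollifies $\psi$. Instead it fixes $t_0$, uses a product-rule splitting
\[
\left.\ptp\right|_{t_0}\int \LL\psi\,d_t x \le \int \LL(x,t_0)v(x,t_0)\,d_{t_0}x+\left.\ptp\right|_{t_0}\int \LL(x,t)\psi(x,t_0)\,d_t x,
\]
and then, for the second term (where $\psi(\cdot,t_0)$ is frozen), replaces $\LL$ by the smooth solution $\aLL$ of $(\pt-\Delta)\aLL=\scal\,\aLL$ with $\aLL(\cdot,t_0)=\LL(\cdot,t_0)$. Since $\aLL$ is smooth for $t>t_0$, one integrates by parts to throw $\Delta$ onto $\psi(\cdot,t_0)$ and then invokes the fact that barrier $\Rightarrow$ distributional to get $\int\Delta\aLL\,\psi\le\int\aLL\,u$. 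The point is that the smoothness burden is shifted to $\aLL$, which is automatically smooth by parabolic regularity, so no approximation of $\psi$ is needed.

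Your route could in principle be made to work, but the step you label ``routine'' is the actual crux and is not adequately justified. On a Riemannian manifold, mollification in charts does not commute with $\Delta_g$, so it does not automatically send the distributional inequality $\Delta\psi\le u$ to a pointwise bound $\Delta\psi_k\le u+\varepsilon_k$; one needs something like Greene--Wu smoothing or a heat-kernel regularization, and this has to be carried out with a \emph{time-dependent} metric. Worse, to apply Lemma~\ref{g smooth} you need $\psi_k$ smooth in $(x,t)$ with $\pt\psi_k\le v+\varepsilon_k$ pointwise, but $\pt\psi$ is only given in the barrier sense, so $\pt$ of a spatial mollification need not exist, and mollifying in $t$ as well requires turning a barrier bound into a uniform difference-quotient bound, which does not follow directly. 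You correctly flag this as ``the main obstacle,'' but the resolution you sketch (equivalence of barrier and distributional, plus modulus-of-continuity errors) does not by itself produce smooth approximants with the required pointwise bounds on $\Delta\psi_k$ and $\pt\psi_k$. The paper's $\aLL$ trick sidesteps all of this.
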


\begin{proof}
Let $t_0$ be an arbitrary time in $(a,b)$. Differentiating at $t_0$ by the product rule we get
\begin{equation}\label{two}
 \left.\ptp\right|_{t_0}\int \mathcal{L}(x,t)\psi(x,t)\,d_t x\le \int \mathcal{L}(x,t_0)v(x,t_0)\,d_{t_0} x+\left.\ptp\right|_{t_0}\int\LL(x,t)\psi(x,t_0)\,d_t x.
\end{equation}
Let $\aLL$ be the solution to the initial value problem
 \begin{equation}
(\ps-\Delta)\overline{\mathcal{L}}=\textnormal{scal}\overline{\mathcal{L}},\quad
\overline{\mathcal{L}}(\cdot,t_0)=\mathcal{L}(\cdot,t_0).
\end{equation}
Then $\aLL$ is smooth for all $t> t_0$. We calculate using the fact that barrier sense implies distributional sense:
\begin{equation}
\begin{split}
\left.\ptp\right|_{t_0}\int \mathcal{L}(x,t)\psi(x,t_0)\,d_t x&\le \left.\ptp\right|_{t_0}\int \aLL(x,t)\psi(x,t)\,d_t x\\
&\le\limsup_{t\rightarrow t_0^+}\pt\int\aLL(x,t)\psi(x,t_0)d_tx\\
&=\limsup_{t\rightarrow t_0^+}\int\Delta\aLL(x,t)\psi(x,t_0)d_tx\\
&\le\limsup_{t\rightarrow t_0^+}\int\aLL(x,t)u(x,t_0)d_tx\\
&=\int\LL(x,t_0)u(x,t_0)d_{t_0}x
\end{split}
\end{equation}
where we used the fact that barrier sense implies distributional sense

\end{proof}

\end{subsection}

\end{section}


\begin{section}{Curvature Decay Lemma}
The main result in this section is Lemma $\ref{l: curvature decay}$, which provides a local estimate on the norm of the Riemann curvature tensor, under the assumption of a local bound for $\ell$. This lemma can be viewed as a weaker version of Theorem $\ref{t: theorem1}$ in the sense that we take the two conclusions of the existence of the Ricci flow and the bound of $\ell$, as additional hypotheses, and deduce the remaining conclusion about $|\Rm|$.

We need three ingredients in the proof of Lemma $\ref{l: curvature decay}$. One is the following Lemma, given in \cite[Lemma~5.1]{local3d} by a point-picking argument.

\begin{lem}\label{decay or no decay}
Given $c_0, r_0>0$, $n\in\mathbb{N}$, and take $\beta=\beta(n)>0$ as in Lemma $\ref{l: shrinking}$. Let $(M^n,g(t))$, $t\in[0,T]$ be a Ricci flow. Suppose for some $x_0\in M$ we have $B_{g(t)}(x_0,r_0)\subset\subset M$ for each $t\in[0,T]$. 

Then at least one of the following assertions is true:
\begin{enumerate}
  \item For each $t\in[0,T]$ with $t<\frac{r_0^2}{\beta^2 c_0}$, we have $B_{g(t)}(x_0,r_0-\beta\sqrt{c_0 t})\subset B_{g(0)}(x_0,r_0)$ and
     \begin{equation}
     |\textnormal{Rm}|_{g(t)}<\frac{c_0}{t}\quad\textnormal{on}\,\,\,B_{g(t)}(x_0,r_0-\beta\sqrt{c_0 t}).
     \end{equation}
   \item There exist $\bar{t}\in(0,T]$ with $\bar{t}<\frac{r_0^2}{\beta^2 c_0}$ and $\bar{x}\in B_{g(\bar{t})}(x_0,r_0-\frac{1}{2}\beta\sqrt{c_0 \overline{t}})$ such that 
      \begin{equation}
      Q:=|\textnormal{Rm}|_{g(\bar{t})}(\bar{x})\ge\frac{c_0}{\bar{t}},
      \end{equation}
    and
      \begin{equation}
      |\textnormal{Rm}|_{g(t)}(x)\le 4Q=4|\textnormal{Rm}|_{g(\bar{t})}(\bar{x}),
      \end{equation}
   whenever $d_{g(\bar{t})}(x,\bar{x})<\frac{\beta c_0}{8}Q^{-\frac{1}{2}}$ and $\bar{t}-\frac{1}{8}c_0 Q^{-1}\le t\le\bar{t}$.
\end{enumerate}
\end{lem}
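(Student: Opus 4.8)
The plan is to argue by contradiction: assume assertion (1) fails, and then produce the point $(\bar x,\bar t)$ of assertion (2) via a point-picking (parabolic rescaling) argument. If (1) fails, then either the shrinking-ball inclusion $B_{g(t)}(x_0,r_0-\beta\sqrt{c_0 t})\subset B_{g(0)}(x_0,r_0)$ breaks down for some admissible $t$, or it holds but the curvature bound $|\Rm|_{g(t)}<\frac{c_0}{t}$ fails somewhere on $B_{g(t)}(x_0,r_0-\beta\sqrt{c_0 t})$. In fact the first possibility can be ruled out using the Shrinking Lemma (Lemma \ref{l: shrinking}): as long as $|\Rm|_{g(t)}\le\frac{c_0}{t}$ holds on the relevant shrinking ball for all earlier times, the inclusion is automatic, so the \emph{first} time something goes wrong must be a curvature violation. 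Hence there is some $t_1\in(0,T]$ with $t_1<\frac{r_0^2}{\beta^2 c_0}$ and a point $x_1\in B_{g(t_1)}(x_0,r_0-\beta\sqrt{c_0 t_1})$ where $|\Rm|_{g(t_1)}(x_1)\ge\frac{c_0}{t_1}$.

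Next I would run the standard point-picking scheme. Consider the quantity $|\Rm|_{g(t)}(x)$ over the parabolic region $\{(x,t): t\le t_1,\ x\in B_{g(t)}(x_0,r_0-\tfrac12\beta\sqrt{c_0 t})\}$ and look for a point where the curvature is large relative to the remaining "room" in space and backward time. Concretely, among points $(x,t)$ with $|\Rm|_{g(t)}(x)\ge\frac{c_0}{t}$ one picks $(\bar x,\bar t)$ so that $Q:=|\Rm|_{g(\bar t)}(\bar x)$ controls the curvature on a forward-in-space ball of radius $\sim\beta c_0 Q^{-1/2}$ and on a backward time interval of length $\sim c_0 Q^{-1}$; this is the usual "if not, pass to a point with larger curvature, closer in parabolic scale, and iterate" argument, which terminates because $T$ is finite and curvature is continuous (so bounded on the compact sub-regions $B_{g(t)}(x_0,r_0)\subset\subset M$). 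The constraint $\bar x\in B_{g(\bar t)}(x_0,r_0-\tfrac12\beta\sqrt{c_0\bar t})$ rather than the full ball of radius $r_0-\beta\sqrt{c_0\bar t}$ leaves a definite "collar" of width $\tfrac12\beta\sqrt{c_0\bar t}$, which is exactly what is needed to guarantee that the small ball $B_{g(\bar t)}(\bar x,\tfrac{\beta c_0}{8}Q^{-1/2})$ on which we claim the bound $|\Rm|\le 4Q$ stays inside $M$ (using $Q\ge c_0/\bar t$, so $Q^{-1/2}\le\sqrt{\bar t}/\sqrt{c_0}$, hence $\tfrac{\beta c_0}{8}Q^{-1/2}\le\tfrac{\beta}{8}\sqrt{c_0\bar t}$).

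The main obstacle — and the only genuinely delicate point — is making the point-picking bookkeeping precise: one must verify that the auxiliary quantity being maximized (roughly, $|\Rm|_{g(t)}(x)$ weighted by a factor measuring distance to the parabolic boundary) actually attains its supremum, that the selected $(\bar x,\bar t)$ lies in the correct sub-ball, and that the curvature bound $|\Rm|_{g(t)}(x)\le 4Q$ propagates both to the spatial neighborhood $d_{g(\bar t)}(x,\bar x)<\tfrac{\beta c_0}{8}Q^{-1/2}$ and backward to $\bar t-\tfrac18 c_0 Q^{-1}\le t\le\bar t$. Propagating the bound backward in time requires care because the metric (hence distances) changes; here one invokes the Shrinking/Expanding distance-distortion estimates again to compare $d_{g(t)}$ with $d_{g(\bar t)}$ over the short interval $[\bar t-\tfrac18 c_0 Q^{-1},\bar t]$, noting the time interval has length comparable to $Q^{-1}$ so the distance distortion is controlled by the assumed curvature bound $4Q$ and stays a small fraction of $Q^{-1/2}$. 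Since the precise constants $\tfrac18$, $\tfrac18$, $\tfrac12$ are engineered for this to close, I would set up the iteration with a free small parameter and fix it at the end; this is exactly the argument of \cite[Lemma~5.1]{local3d}, and I would cite it for the detailed verification.
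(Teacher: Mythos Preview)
Your proposal is correct and matches the paper exactly: the paper does not give its own proof of this lemma but simply records it as one of three ingredients, citing \cite[Lemma~5.1]{local3d} and noting it is obtained ``by a point-picking argument.'' Your sketch of the point-picking scheme (find a first violation time, then iterate to a point where curvature controls a parabolic neighborhood, using the collar $\tfrac12\beta\sqrt{c_0\bar t}$ and the Shrinking Lemma to manage distance distortion) is the standard argument from that reference, and your explicit citation of it at the end is precisely what the paper does.
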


The second ingredient we need is from \cite{local3d} which says the volume of a ball of fixed radius cannot decrease too rapidly under some curvature hypothesis. 

\begin{lem}\label{l: volume}
Given $K,\gamma,c_0,v_0,T>0$ and $n\in\mathbb{N}$, there exist positive constants $\varepsilon_0=\varepsilon_0(v_0,K,\gamma,n)$ and $\hat{T}=\hat{T}(v_0,c_0,K,\gamma,n)\ge 0$ such that the following holds: Let $(M^n,g(t)), t\in[0,T)$ be a Ricci flow such that $B_{g(t)}(x_0,\gamma)\subset\subset M$ for some $x_0\in M$ and all $t\in[0,T)$. Suppose $\Ric_{g(t)}\ge -K$ and $|\Rm|_{g(t)}\le\frac{c_0}{t}$ on $B_{g(t)}(x_0,\gamma)$ for all $t\in[0,T)$, and $Vol_{g(0)}B_{g(0)}(x_0,\gamma)\ge v_0$.

Then 
\begin{equation}
Vol_{g(t)}B_{g(t)}(x_0,\gamma)\ge\varepsilon_0
\end{equation}
for all $t\in[0,\hat{T}]\cap[0,T)$.
\end{lem}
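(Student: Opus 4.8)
The plan is to control the rate at which the volume of the fixed ball $B_{g(t)}(x_0,\gamma)$ can shrink, and to combine this with the fact that its radius does not shrink too much (via the Shrinking Lemma), to conclude that a definite fraction of the initial volume survives up to a uniform time. First I would use the Ricci flow equation $\pt g = -2\Ric$ together with the lower bound $\Ric_{g(t)} \ge -K$ on $B_{g(t)}(x_0,\gamma)$: this gives $\pt (d_t x) = -\scal_{g(t)}\, d_t x$, and since $\scal_{g(t)} \le n \cdot \frac{(n-1)c_0}{t}$ from the curvature bound $|\Rm|_{g(t)}\le \frac{c_0}{t}$, the volume form can decay at most like a power of $t$. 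However, the bound $|\Rm|\le c_0/t$ alone gives $\pt \log(d_t x) \ge -\tfrac{C(n)c_0}{t}$, which is not integrable near $t=0$; so instead I would split $[0,\hat T]$ at a small fixed time $t_* = t_*(c_0,K,n)$, using the lower bound $\Ric \ge -K$ to control volume change on $[t_*,t]$ (where it contributes only a factor $e^{-C(n)K(t-t_*)}$, bounded on any finite interval) and using a volume-ratio/Bishop--Gromov comparison argument on $[0,t_*]$ to pass from $Vol_{g(0)}B_{g(0)}(x_0,\gamma)$ to $Vol_{g(t_*)}B_{g(t_*)}(x_0,\gamma_*)$ for a slightly smaller radius $\gamma_* = \gamma - \beta\sqrt{c_0 t_*}$.

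The key geometric input is the Shrinking Lemma (Lemma \ref{l: shrinking}): under $|\Rm|_{g(t)}\le c_0/t$ we have $B_{g(t)}(x_0, \gamma - \beta\sqrt{c_0 t}) \subset B_{g(0)}(x_0,\gamma)$, so that
\begin{equation*}
Vol_{g(t)}B_{g(t)}(x_0,\gamma) \ge Vol_{g(t)}B_{g(t)}(x_0, \gamma - \beta\sqrt{c_0 t}),
\end{equation*}
and by choosing $\hat T$ small enough we keep $\gamma - \beta\sqrt{c_0 \hat T} \ge \gamma/2$, say. Then I would estimate $Vol_{g(t)}B_{g(t)}(x_0,\gamma/2)$ from below. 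On the time interval where the solution is already "nice" one can use the evolution of the volume form directly; near $t=0$ the subtlety is that the metrics $g(t)$ need not converge to $g(0)$ with bounded curvature, but the one-sided bounds we have suffice: $\Ric \ge -K$ gives a Bishop--Gromov upper volume bound at every time (hence no sudden inflation elsewhere that would "steal" volume), and $\scal \le \frac{n(n-1)c_0}{t}$ integrates against $dt$ after the split only away from $0$, while for the interval $[0,t_*]$ I would instead compare $Vol_{g(t)}B_{g(t)}(x_0,r)$ across times using the distance-distortion estimates of Lemma \ref{holderinequality}, which show balls of definite radius at time $t$ contain balls of definite (Hölder-comparable) radius at time $0$, whose volume is bounded below by the non-collapsing hypothesis $Vol_{g(0)}B_{g(0)}(x,1) \ge v_0$ together with Bishop--Gromov (to pass from radius $1$ to the smaller radius).

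Concretely, the chain of inequalities I expect is: $Vol_{g(t)}B_{g(t)}(x_0,\gamma) \ge Vol_{g(t)} B_{g(t)}(x_0,\gamma/2)$ (Shrinking Lemma, $t\le\hat T$) $\ge e^{-C(n)K t}\, Vol_{g(t_0)}B_{g(t_0)}(x_0,\rho)$ for a suitable intermediate time $t_0$ and radius $\rho$ (volume-form evolution plus a further Shrinking-Lemma radius loss) $\ge c(n,K,\gamma)\, Vol_{g(0)}B_{g(0)}(x_0,\gamma) \ge c(n,K,\gamma)\, v_0 =: \varepsilon_0$, where the last two steps use Bishop--Gromov to compare volumes of concentric balls of comparable radii under $\Ric \ge -K$, and $\hat T$ is chosen so that all the cumulative radius losses $\beta\sqrt{c_0 t}$ stay below $\gamma/2$ and the exponential factor $e^{-C(n)K\hat T}$ stays above $1/2$. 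The dependence $\varepsilon_0 = \varepsilon_0(v_0,K,\gamma,n)$ and $\hat T = \hat T(v_0,c_0,K,\gamma,n)$ comes out of this bookkeeping.

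The main obstacle I anticipate is the behavior as $t \searrow 0$: we are not assuming curvature is bounded on $B_{g(0)}(x_0,\gamma)$ (only $|\Rm|\le c_0/t$, which blows up), so $g(t) \to g(0)$ only in a weak sense, and one cannot naively integrate $\pt \log(d_t x) = -\scal_{g(t)}$ from $0$. The resolution is to avoid integrating the volume-form evolution all the way to $t=0$ and instead transfer the non-collapsing from time $0$ to a positive time purely via the distance-distortion/Shrinking estimates (which only need the one-sided bound $\Ric \le \frac{(n-1)c_0}{t}$, integrable against $\sqrt{\,\cdot\,}$) combined with Bishop--Gromov (which only needs $\Ric \ge -K$); this is exactly the strategy of \cite{local3d}, and the verification that the constants can be chosen uniformly is the routine but somewhat delicate part.
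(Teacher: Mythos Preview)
The paper does not prove this lemma; it is quoted from Simon--Topping \cite{local3d}. So there is no ``paper's own proof'' to compare against, and your proposal must stand on its own.

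You correctly identify the obstacle: the pointwise bound $\scal_{g(t)} \le C(n)c_0/t$ coming from $|\Rm|\le c_0/t$ is not integrable near $t=0$, so one cannot pass the volume lower bound from $t=0$ to $t>0$ by integrating $\partial_t(dV_t) = -\scal\, dV_t$. However, your proposed fix does not close the gap. In your chain of inequalities the step
\[
Vol_{g(t)} B_{g(t)}(x_0,\gamma/2) \;\ge\; e^{-C(n)Kt}\, Vol_{g(t_0)} B_{g(t_0)}(x_0,\rho)
\]
requires an \emph{upper} bound on $\scal_{g(s)}$ for $s\in[t_0,t]$ of the form $\scal\le C(n)K$. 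But the hypothesis $\Ric\ge -K$ gives only the \emph{lower} bound $\scal\ge -nK$, hence an \emph{upper} bound $dV_t\le e^{nKt}\,dV_0$ on the volume form --- the wrong direction. The only upper bound on $\scal$ available is the one you already rejected, $\scal \le C(n)c_0/s$; using it on $[t_0,t]$ produces a factor $\exp(-C c_0(t-t_0)/t_0)$, which is uncontrolled unless $t_0$ is a fixed fraction of $t$, and then the problem on $[0,t_0]$ is the same one at a smaller scale.

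The other half of your fix --- transferring non-collapsing from time $0$ to time $t_*$ ``via distance distortion and Bishop--Gromov'' --- does not work either. The Shrinking/Expanding Lemmas and the H\"older estimate control how \emph{balls} nest across times, and Bishop--Gromov compares volumes of concentric balls at a \emph{single} time; neither yields a lower bound on $Vol_{g(t_*)}(\Omega)$ in terms of $Vol_{g(0)}(\Omega)$ for any set $\Omega$. That comparison is exactly what needs an integrable upper bound on $\scal$, so you are back where you started. (A minor point: you also invoke $Vol_{g(0)}B_{g(0)}(x,1)\ge v_0$ for all $x$, which is not a hypothesis of this lemma --- only the single ball $B_{g(0)}(x_0,\gamma)$ is assumed non-collapsed.)

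The actual Simon--Topping argument differentiates $V(t)=Vol_{g(t)}B_{g(t)}(x_0,\gamma)$ directly and bounds $\frac{d^-V}{dt}$ from below by a quantity depending only on $K,\gamma,n$ (note $\varepsilon_0$ is independent of $c_0$), by treating the boundary-motion contribution and the $-\int_B \scal$ contribution together rather than estimating $\int_B\scal$ via the pointwise bound. The Bishop--Gromov bound on the area of geodesic spheres (from $\Ric\ge -K$) is the controlling ingredient. I would recommend consulting \cite{local3d} for the details rather than trying to reconstruct it from the tools you list.
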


The third ingredient is the following Lemma, which says that the asymptotic volume ratio of a weakly $\textnormal{PIC}_1$ ancient solution is zero. This is proved in \cite[Lemma~4.2]{almost}. We note that each  curvature condition listed in Theorem $\ref{t: theorem1}$ implies weakly $\textnormal{PIC}_1$, so the proof of Lemma $\ref{l: curvature decay}$ is uniform for all $\mathcal{C}$.

\begin{lem}\label{l: AVR}
Let $(M^n,g(t)), t\in(-\infty,0]$ be a nonflat ancient solution of the Ricci flow with bounded curvature satisfying weakly $\textnormal{PIC}_1$. Then it has non-negative complex sectional curvature. Furthermore, the volume growth is non-Euclidean, i.e. $\underset{r\rightarrow\infty}{\lim}r^{-n}Vol_{g(0)}B_{g(0)}(x,r)=0$ for all $x\in M$.
\end{lem}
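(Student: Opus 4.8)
The plan is to prove the two assertions in turn, the second using the first (this is \cite[Lemma~4.2]{almost}). For the first assertion---that weakly $\textnormal{PIC}_1$ forces nonnegative complex sectional curvature on an ancient solution---I would run a maximum-principle argument on the minimal complex sectional curvature, exploiting that the flow is ancient with bounded curvature. Recall that weakly $\textnormal{PIC}_1$, that is, nonnegative isotropic curvature of $M\times\mathbb{R}$, is a closed convex $O(n)$-invariant curvature cone preserved by the Ricci flow (Brendle--Schoen \cite{2} and Nguyen \cite{17}, together with Brendle's treatment of the $\textnormal{PIC}_1$ cone), so $\Rm_{g(t)}$ stays weakly $\textnormal{PIC}_1$ for all $t\le 0$. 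Extending the curvature operator complex-bilinearly to $\Lambda^2(TM\otimes\mathbb{C})$, the complex sectional curvatures evolve by a heat-type equation with reaction term $Q(\Rm)=\Rm^2+\Rm^{\#}$, so by Hamilton's maximum principle the infimum $\mu(t)$ over all complex $2$-planes at all points of the complex sectional curvature satisfies, in the barrier sense, $\tfrac{d}{dt}\mu(t)\ge\langle Q(\Rm),\sigma\otimes\overline{\sigma}\rangle$ at a minimizing plane $\sigma$. Assume for contradiction $\mu(0)<0$. The algebraic input I would use is a lower bound of the form $\langle Q(\Rm),\sigma\otimes\overline{\sigma}\rangle\ge c(n)\,\mu^2$ for a weakly $\textnormal{PIC}_1$ operator at a minimizing plane with $\mu\le 0$---the kind of reaction estimate at the heart of the $\textnormal{PIC}_1$-preservation proofs; failing a clean bound in this form, the fallback is Hamilton's strong maximum principle together with the classification of Ricci-flow-invariant faces of the weakly $\textnormal{PIC}_1$ cone, as in those same proofs. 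Granting such an estimate, $\tfrac{d}{dt}\mu\ge c(n)\mu^2\ge 0$, so $\mu$ is nondecreasing; then $\mu(t)\le\mu(0)<0$ and $\tfrac{d}{dt}\mu(t)\ge c(n)\mu(0)^2>0$ for every $t\le 0$, forcing $\mu(t)\to-\infty$ as $t\to-\infty$ and contradicting the bounded-curvature hypothesis. Hence $\mu(0)\ge 0$.

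For the second assertion I would argue by contradiction using Perelman's reduced volume. By the first part the solution has nonnegative complex sectional curvature, hence nonnegative sectional and Ricci curvature, so by Bishop--Gromov the asymptotic volume ratio $\nu:=\lim_{r\to\infty}r^{-n}\mathrm{Vol}_{g(0)}B_{g(0)}(x,r)$ exists and is independent of $x$; suppose $\nu>0$. Based at a spacetime point $(x,0)$, the reduced volume $\widetilde V(\tau)$ is nonincreasing and tends to the Euclidean value as $\tau\searrow 0$; using the Laplacian and Bishop--Gromov comparisons (available since $\Ric\ge 0$) to bound the reduced length from above on large balls, one obtains $\widetilde V(\tau)\ge c(n)\,\nu>0$ for all $\tau$, so $\widetilde V$ has a positive limit as $\tau\to\infty$. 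Rescaling by $g_k(t):=\tau_k^{-1}g(\tau_k t)$ along a sequence $\tau_k\to\infty$ and passing to a pointed limit---here one needs the trace Harnack inequality for weakly $\textnormal{PIC}_1$ ancient solutions (Brendle) for local curvature bounds, and $\nu>0$ for $\kappa$-noncollapsedness---Perelman's rigidity in the reduced-volume monotonicity identifies the limit as a complete gradient shrinking Ricci soliton; it is nonflat (otherwise $\widetilde V$ would be constantly Euclidean and the original solution flat), retains nonnegative complex sectional curvature, hence nonnegative Ricci curvature, and still has asymptotic volume ratio $\ge\nu>0$ (the asymptotic volume ratio is scale invariant and does not drop under Cheeger--Gromov limits with a uniform lower Ricci bound). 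Since a complete nonflat gradient shrinking Ricci soliton with nonnegative Ricci curvature has zero asymptotic volume ratio (soliton rigidity, as in Perelman's analysis of $\kappa$-solutions; see also Carrillo--Ni), this is a contradiction, so $\nu=0$.

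I expect the main obstacle to be the algebraic reaction estimate for weakly $\textnormal{PIC}_1$ operators underpinning the first assertion: this is the technical core of the Brendle--Schoen curvature-pinching machinery and would be imported rather than reproved here. A secondary difficulty, in the second assertion, is verifying that the rescalings $g_k$ have curvature bounded on compact subsets so that the asymptotic shrinking soliton genuinely exists, which is precisely where the Harnack inequality for weakly $\textnormal{PIC}_1$ ancient solutions is needed in place of Hamilton's Harnack (the latter requiring a nonnegative curvature operator).
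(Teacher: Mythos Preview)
The paper does not prove this lemma; it simply quotes \cite[Lemma~4.2]{almost}. Your two-step structure---first upgrade weakly $\textnormal{PIC}_1$ to weakly $\textnormal{PIC}_2$, then show the asymptotic volume ratio vanishes---is the right organization and matches \cite{almost}.

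The gap is in your first step. The reaction inequality $\langle Q(\Rm),\sigma\otimes\overline\sigma\rangle\ge c(n)\,\mu^2$ at a minimizing complex plane $\sigma$ for a weakly $\textnormal{PIC}_1$ operator with $\mu\le 0$ is not a consequence of the invariance of the $\textnormal{PIC}_1$ or $\textnormal{PIC}_2$ cones, and I do not believe it holds in this form. Note that the evolution inequality from \cite[Proposition~2.2]{almost} (quoted in this paper as \eqref{evolution}) points the \emph{other} way: it yields $\tfrac{d}{dt}(\sup\ell)\le\scal\cdot\sup\ell+C(n)(\sup\ell)^2$, which controls forward growth but gives no decay backward in time on an ancient solution with bounded curvature. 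What \cite{almost} actually uses is a continuous family of invariant pinching cones interpolating between $\textnormal{PIC}_1$ and $\textnormal{PIC}_2$ (obtained from the Brendle--Schoen cones via the B\"ohm--Wilking machinery): each cone is preserved by the Hamilton ODE, and on an ancient solution with bounded curvature the curvature tensor must lie in every cone of the family, hence in their intersection $\textnormal{PIC}_2$. Your ``fallback'' via the strong maximum principle and face classification likewise needs this family of cones to get off the ground; on its own it does not force a $\textnormal{PIC}_1$ operator into $\textnormal{PIC}_2$.

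For the second step, your asymptotic-shrinker route is valid once nonnegative complex sectional curvature is in hand, but it is heavier than needed and, as you note, requires a Harnack inequality to extract the blowdown limit. The argument in \cite{almost} follows Perelman \cite[\S11.4]{Pel1} directly: if the asymptotic volume ratio were positive, a point-picking argument produces a blowdown limit containing a line (Toponogov, using $\sec\ge 0$ from $\textnormal{PIC}_2$); the limit then splits off an $\mathbb{R}$ factor, and one inducts on dimension to a contradiction. This bypasses the Harnack issue entirely, since the point-picking already supplies the needed local curvature bounds.
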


We now states our main result of this section. In the proof we blow up a contradicting sequence to get a weakly $\textnormal{PIC}_1$ ancient solution with positive asymptotic volume ratio, which is impossible by Lemma $\ref{l: AVR}$.  


\begin{lem}\label{l: curvature decay}
(Curvature Decay Lemma). Given $v_0,K>0$, $0<\gamma<1$, and $n\in\mathbb{N}$, there exist positive constants $\tilde{T}=\tilde{T}(v_0,K,n,\gamma)$, $C_1=C_1(v_0,K,n,\gamma)$ and $\eta_0=\eta_0(v_0,K,n,\gamma)$ such that the following holds: Let $(M^n,g(t)), t\in[0,T]$ be a Ricci flow (not necessarily complete) such that $B_{g(t)}(x_0,1)\subset\subset M$ for each $t\in[0,T]$ and some $x_0\in M$, and
\begin{equation}
 Vol_{g(0)}B_{g(0)}(x_0,1)\ge v_0>0.
\end{equation}
Suppose further that
\begin{equation}
\ell(x,t)\le K\;\;\;\textnormal{on}\;\bigcup_{s\in[0,T]}B_{g(s)}(x_0,1),\;\;\;\textnormal{for all}\;\;t\in[0,T],
\end{equation}
and $\gamma\in(0,1)$ is any constant. 

Then for all $t\in(0,T)\cap(0,\tilde{T})$, we have
\begin{equation}\label{decay}
|\textnormal{Rm}|_{g(t)}<\frac{C_1}{t}\;\;\;\textnormal{on}\;\;\;B_{g(t)}(x_0,\gamma),
\end{equation}
and
\begin{equation}
Vol_{g(t)}B_{g(t)}(x_0,1)\ge\eta_0\;\;\;\textnormal{and}\;\;\;\textnormal{inj}_{g(t)}(x_0)\ge\sqrt{\frac{t}{C_1}} 
\end{equation}
for all $t\in(0,\min(T,\tilde{T})]$.
\end{lem}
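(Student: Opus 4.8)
\textbf{Proof proposal for Lemma \ref{l: curvature decay}.}

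The plan is to argue by contradiction using the standard point-picking/blow-up machinery, leveraging the three ingredients just stated. Suppose the curvature decay estimate $\eqref{decay}$ fails for every choice of $C_1$ and $\tilde T$. Then for a sequence $c_0=c_0^{(j)}\to\infty$ (and shrinking time thresholds) we obtain Ricci flows $(M_j,g_j(t))$ satisfying the hypotheses but on which alternative (1) of Lemma \ref{decay or no decay} fails; hence alternative (2) holds, producing points $(\bar x_j,\bar t_j)$ and scales $Q_j=|\Rm|_{g_j(\bar t_j)}(\bar x_j)\ge c_0^{(j)}/\bar t_j$ with the parabolic neighborhood curvature bound $|\Rm|\le 4Q_j$. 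I would first record that since $\ell\le K$ we have a two-sided Ricci bound $\Ric_{g_j(t)}\ge -(n-1)K$ on the relevant balls, which via the Shrinking and Expanding Lemmas (Lemmas \ref{l: shrinking}, \ref{l: expanding}) controls how the balls $B_{g_j(t)}(x_0,\cdot)$ move, keeping $\bar x_j$ in a region of controlled geometry; and that by Lemma \ref{l: volume} the volume $Vol_{g_j(t)}B_{g_j(t)}(x_0,1)$ stays bounded below by a uniform $\varepsilon_0>0$ on a uniform time interval. The key point of the point-picking is that $\bar t_j Q_j\to\infty$, so the rescaled flows $\tilde g_j(t):=Q_j\, g_j(\bar t_j+t/Q_j)$ are defined on longer and longer backward time intervals $[-\tfrac18 c_0^{(j)},0]$, with $|\Rm|_{\tilde g_j}\le 4$ on these intervals on balls of radius $\sim \tfrac{\beta c_0^{(j)}}{8}$ around $\bar x_j$, i.e. on larger and larger balls.

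Next I would extract a limit. By Hamilton--Cheeger--Gromov compactness — applicable once we have the uniform curvature bound, plus a uniform injectivity-radius lower bound at $\bar x_j$ — the pointed rescaled flows $(M_j,\tilde g_j(t),\bar x_j)$ subconverge to a complete ancient solution $(M_\infty,g_\infty(t),x_\infty)$, $t\in(-\infty,0]$, with bounded curvature, $|\Rm|_{g_\infty}\le 4$, and $|\Rm|_{g_\infty(0)}(x_\infty)=1$, so it is nonflat. The injectivity radius lower bound comes from the noncollapsing: the uniform volume bound from Lemma \ref{l: volume} together with $|\Rm|\le c_0/t$ gives, after rescaling, a uniform local volume ratio lower bound at $\bar x_j$ at the relevant scale (one uses that $\bar x_j$ lies well inside $B_{g_j(\bar t_j)}(x_0,1)$ and Bishop--Gromov with the lower Ricci bound to transfer the volume lower bound from the unit ball to a ball around $\bar x_j$), hence Cheeger's lemma gives $\operatorname{inj}\ge$ const. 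Since each $g_j$ has $\Rm_{g_j}+\ell\,\textnormal{I}\in\mathcal C$ with $\ell\le K$ fixed while $Q_j\to\infty$, the rescaled curvature operators satisfy $\Rm_{\tilde g_j}+(K/Q_j)\textnormal{I}\in\mathcal C$, and passing to the limit — using that each $\mathcal C$ is a closed convex $O(n)$-invariant cone, and that $\mathcal C$ implies weakly $\textnormal{PIC}_1$ — the limit $(M_\infty,g_\infty(t))$ is a nonflat ancient solution with bounded curvature satisfying weakly $\textnormal{PIC}_1$.

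Now Lemma \ref{l: AVR} applies and forces $\operatorname{AVR}(g_\infty)=\lim_{r\to\infty}r^{-n}\operatorname{Vol}_{g_\infty(0)}B_{g_\infty(0)}(x_\infty,r)=0$. I would derive a contradiction by showing instead that the blow-up limit has \emph{positive} asymptotic volume ratio. This is where the noncollapsing and the bounded curvature at large scales both pay off: on the rescaled flow, we have $|\Rm|_{\tilde g_j}\le 4$ on a ball of radius $R_j:=\tfrac{\beta c_0^{(j)}}{8}\to\infty$ around $\bar x_j$ at time $0$, so by Bishop--Gromov (lower Ricci bound $-12(n-1)$, say) the volume ratio $r^{-n}\operatorname{Vol}B(\bar x_j,r)$ is \emph{almost monotone} down from $r=R_j$; combined with the noncollapsed lower bound on $\operatorname{Vol}B(\bar x_j,\rho_0)$ at a fixed small scale $\rho_0$ coming from the volume estimate, one gets a uniform lower bound $r^{-n}\operatorname{Vol}_{\tilde g_j(0)}B(\bar x_j,r)\ge \delta_0>0$ for all $\rho_0\le r\le R_j$. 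Passing to the limit, $r^{-n}\operatorname{Vol}_{g_\infty(0)}B(x_\infty,r)\ge\delta_0>0$ for all $r\ge\rho_0$, hence $\operatorname{AVR}(g_\infty)\ge\delta_0>0$ — contradicting Lemma \ref{l: AVR}. This establishes $\eqref{decay}$ with the constant $C_1$ depending only on $v_0,K,n,\gamma$. Finally, once $\eqref{decay}$ is known, the volume lower bound $Vol_{g(t)}B_{g(t)}(x_0,1)\ge\eta_0$ for $t\in(0,\min(T,\tilde T)]$ follows directly from Lemma \ref{l: volume} (with $\gamma=1$, $K=(n-1)K$, and $c_0=C_1$, on a possibly smaller uniform time $\tilde T$), and the injectivity radius bound $\operatorname{inj}_{g(t)}(x_0)\ge\sqrt{t/C_1}$ follows from that volume lower bound together with the curvature bound $|\Rm|_{g(t)}\le C_1/t$ via Cheeger's injectivity radius estimate applied to the parabolically rescaled metric at scale $\sqrt t$ (after adjusting $C_1$).

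\textbf{Main obstacle.} The delicate part is the noncollapsing bookkeeping: one must carefully track that the point $\bar x_j$ produced by Lemma \ref{decay or no decay} sits in a region where the volume lower bound from Lemma \ref{l: volume} can actually be transferred (via the lower Ricci bound coming from $\ell\le K$ and Bishop--Gromov) to a definite volume ratio at the blow-up scale, and that the radius $R_j$ over which one has the curvature bound $|\Rm|_{\tilde g_j}\le 4$ genuinely tends to infinity with the right uniformity — so that after rescaling the "good ball" exhausts $M_\infty$ and the almost-monotonicity argument yields positive AVR rather than merely a positive volume at one scale. Getting the quantifiers ($\tilde T$, $C_1$, $\eta_0$ all depending only on $v_0,K,n,\gamma$) consistent through the contradiction argument is the bulk of the technical work.
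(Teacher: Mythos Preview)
Your overall strategy is exactly that of the paper: contradiction, point-picking via Lemma~\ref{decay or no decay}, parabolic rescaling by $Q_j$, Hamilton compactness to an ancient weakly $\textnormal{PIC}_1$ solution, and a clash with Lemma~\ref{l: AVR}. Two points, however, deserve correction.

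First, the paper inserts a small reduction you omit: before the contradiction, it notes that the constant $\varepsilon_0$ in Lemma~\ref{l: volume} depends on $v_0,K,\gamma,n$ but \emph{not} on the curvature constant $c_0$. This allows one to \emph{fix} $\eta_0$ in advance and then add ``$Vol_{g(t)}B_{g(t)}(x_0,1)\ge\eta_0$ for all $t\in[0,T)$'' as an extra hypothesis to the contradicting sequence (taking $t_k$ to be the first failure time and arranging $c_kt_k\to 0$). Your appeal to Lemma~\ref{l: volume} inside the contradiction is a little loose on this dependence; without this reduction the ``uniform time interval'' you invoke could in principle depend on the diverging $c_0^{(j)}$.

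Second --- and this is the substantive point --- your argument for positive AVR in the limit is phrased in a way that does not work as written. You propose to use Bishop--Gromov in the \emph{rescaled} metric, where $|\Rm|\le 4$ gives only $\Ric\ge -12(n-1)$; but with a negative Ricci lower bound Bishop--Gromov compares to hyperbolic volume, so a lower bound on $\operatorname{Vol}B(\bar x_j,\rho_0)$ at a fixed small scale does \emph{not} propagate to a Euclidean ratio $r^{-n}\operatorname{Vol}B(\bar x_j,r)\ge\delta_0$ at large $r$. The paper instead runs Bishop--Gromov in the \emph{unrescaled} metric: from $\Ric\ge -(n-1)K$ on the unit ball and $Vol\,B_{g(\bar t_k)}(x_k,1)\ge\eta_0$, one transfers volume to $\bar x_k$ (which lies within distance $(1+\gamma)/2$ of $x_k$) and obtains
\[
\frac{Vol_{g(\bar t_k)}B_{g(\bar t_k)}(\bar x_k,r)}{r^n}\ge \eta>0\qquad\text{for all }0<r<\tfrac{1-\gamma}{2}.
\]
Upon rescaling by $Q_k$, this fixed range becomes $0<r<\tfrac{1-\gamma}{2}\sqrt{Q_k}\to\infty$, which is precisely the positive-AVR statement needed to contradict Lemma~\ref{l: AVR}. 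You correctly flagged this step as the main obstacle; the fix is to do the volume comparison \emph{before} rescaling, not after.
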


\begin{proof}
By Bishop-Gromov, $Vol_{g(0)}B_{g(0)}(x_0,\gamma)$ has a positive lower bound depending only on $v_0$, $K$ and $\gamma$. Applying Lemma $\ref{l: volume}$ to $g(t)$, we see that there exists $\eta_0>0$ depending only on $v_0$, $K$ and $\gamma$ such that for each $C_1<\infty$, there exist $\tilde{T}=\tilde{T}(v_0,\gamma,C_1)$ such that prior to time $\tilde{T}$ and while $|\textnormal{Rm}|_{g(t)}\le\frac{C_1}{t}$ still holds on $B_{g(t)}(x_0,\gamma)$, we have a lower volume bound
\begin{equation}
Vol_{g(t)}B_{g(t)}(x_0,1)\ge\eta_0.
\end{equation}
In particular, $\eta_0$ is independent of $C_1$. From this we deduce that is suffices to prove the lemma with the additional hypothesis that the equation above holds for each $t\in[0,T)$.

Let us assume that the lemma is false, even with the extra hypothesis. For some $v_0$, $K>0$ and $\gamma\in(0,1)$. Then for any sequence $c_{k}\rightarrow\infty$, we can find Ricci flows that fail the lemma with $C_1=c_k$ in an arbitrary short time, and in particular within a time $t_k$ that is sufficiently small so that $c_kt_k\rightarrow 0$ as $k\rightarrow\infty$. By reducing $t_k$ to the first time at which the desired conclusion fails, we have a sequence of Ricci flows $(M_k,\tilde{g}_k(t))$ for $t\in[0,t_k]$ with $t_k\rightarrow 0$, and even $c_kt_k\rightarrow 0$, and a sequence of points $x_k\in M_k$ with $B_{\tilde{g}_k(t)}(x_k,1)\subset\subset M_k$ for each $t\in[0,t_k]$, such that
\begin{equation}
\begin{split}
Vol_{\tilde{g}_k(t)}B_{\tilde{g}_k(t)}(x_k,1)&\ge\eta_0,\,\,\,\textnormal{for all}\,\,\, t\in [0,t_k],
\end{split}
\end{equation}
\begin{equation}
\ell(x,t)\le K,\,\,\,\textnormal{on}\,\,\,\bigcup_{s\in[0,t_k]}B_{\tilde{g}_k(s)}(x_k,1)\,\,\,\textnormal{for all}\,\,\,t\in[0,t_k],
\end{equation}
and
\begin{equation}
|\textnormal{Rm}|_{\tilde{g}_k(t)}<\frac{c_k}{t}\,\,\,\textnormal{on}\,\,\, B_{\tilde{g}_k(t)}(x_k,\gamma)\,\,\, \textnormal{for all}\,\,\, t\in[0,t_k],
\end{equation}
but so that 
\begin{equation}\label{hi}
|\textnormal{Rm}|_{\tilde{g}_k(t_k)}=\frac{c_k}{t_k}\,\,\,\textnormal{at some point in}\,\,\,\overline{B_{\tilde{g}_k(t)}(x_k,\gamma)}.
\end{equation}
For sufficiently large $n$, we have $\beta\sqrt{c_kt_k}<\frac{1-\gamma}{2}$. We apply Lemma $\ref{decay or no decay}$, to each $\tilde{g}_k(t)$ with $r_0=\frac{1+\gamma}{2}$ and $c_0=c_k$, then it follows by $\eqref{hi}$ that Assertion $1$ there cannot hold, and thus Assertion $2$ must hold for each $n$, giving time $\bar{t}_k\in(0,t_k]$ and points $\bar{x}_k\in B_{\tilde{g}_k(\bar{t}_k)}(x_k, r_0-\frac{1}{2}\beta\sqrt{c_k\overline{t}_k})$ such that
\begin{equation}\label{blessed}
|\textnormal{Rm}|_{\tilde{g}_k(t)}(x)\le 4|\textnormal{Rm}|_{\tilde{g}_k(\bar{t}_k)}(\bar{x}_k)
\end{equation}
on $B_{\tilde{g}(\bar{t}_k)}(\bar{x}_k,\frac{\beta c_k}{8}Q_{k}^{-\frac{1}{2}})$, for all $t\in[\bar{t}_k-\frac{1}{8}c_kQ_k^{-1}, \bar{t}_k]$, where $Q_k:=|Rm|_{\tilde{g}_k(\bar{t}_k)}(\bar{x}_k)\ge \frac{c_k}{\bar{t}_k}\rightarrow\infty$. We also notice that $B_{\tilde{g}(\bar{t}_k)}(\bar{x}_k,\frac{\beta c_k}{8}Q_{k}^{-\frac{1}{2}})\subset B_{\tilde{g}(\bar{t}_k)}(x_k,1)$, thus
\begin{equation}\label{happy}
 \ell(x,t)\le K
\end{equation}
on $B_{\tilde{g}(\bar{t}_k)}(\bar{x}_k,\frac{\beta c_k}{8}Q_{k}^{-\frac{1}{2}})\times[\bar{t}_k-\frac{1}{8}c_kQ_k^{-1}, \bar{t}_k]$.
The above conditions at $\bar{t}_k$, together with Bishop-Gromov, imply that we have uniform volume ratio control
\begin{equation}\label{e: ratio}
\frac{Vol_{\tilde{g}_k(\bar{t}_k)}B_{\tilde{g}_k(\bar{t}_k)}(\bar{x}_k,r)}{r^n}\ge\eta>0
\end{equation}
for all $0<r<\frac{1-\gamma}{2}$, where $\eta$ depends on $\eta_0$, $K$ and $\gamma$.
A parabolic rescaling on $B_{\tilde{g}(\bar{t}_k)}(\bar{x}_k,\frac{\beta c_k}{8}Q_{k}^{-\frac{1}{2}})\times [\bar{t}_k-\frac{1}{8}c_kQ_k^{-1}, \bar{t}_k]$ gives new Ricci flows defined by 
$$g_k(t):=Q_k\tilde{g}_k(\frac{t}{Q_k}+\bar{t}_k)$$ for $t\in[-\frac{1}{8}c_k,0]$. The scaling factor is chosen so that $|\textnormal{Rm}|_{g_k(0)}(\bar{x}_k)=1$. By $\eqref{blessed}$, the curvature of $g_k(t)$ is uniformly bounded on $B_{g_{k}(0)}(\bar{x}_k,\frac{1}{8}\beta c_k)\times [-\frac{1}{8}c_k,0]$. Condition $\eqref{happy}$ transforms to
\begin{equation}\label{PIC}
\ell(x,t)\le \frac{K}{Q_k}\rightarrow 0
\end{equation}
on $B_{g_{k}(0)}(\bar{x}_k,\frac{1}{8}\beta c_k)\times[-\frac{1}{8}c_k,0]$.
The volume ratio $\eqref{e: ratio}$ gives 
\begin{equation}
\frac{Vol_{g_k(0)}B_{g_k(0)}(\bar{x}_k,r)}{r^n}\ge\eta>0
\end{equation}
for all $0<r<\frac{1-\gamma}{2}Q_k^{\frac{1}{2}}\rightarrow\infty$.

With this control we can apply Hamilton's compactness theorem to give convergence $(M_k,g_k(t),\bar{x}_k)\rightarrow(N,g(t),x_{\infty})$, for some complete bounded-curvature Ricci flow $(N,g(t))$, for $t\in(-\infty,0]$, and $x_{\infty}\in N$. 

Moreover, the last volume equation passes to limit to force $g(t)$ to have positive asymptotic volume ratio. From $\eqref{PIC}$ we know that $g(t)$ is a nonflat ancient solution of Ricci flow with bounded curvature satisfying weakly $\textnormal{PIC}_1$. This contradicts Lemma $\ref{l: AVR}$ that the volume ratio of $(N,g(t))$ vanishes, and thus shows the first part of the Lemma. For the second part, we choose $\gamma=\frac{1}{2}$, then $Vol_{g(t)}B_{g(t)}(x_0,\frac{1}{2})\ge\eta_0>0$. The injectivity radius estimate of  Cheeger-Gromov-Taylor \cite{inj} and the Bishop-Gromov comparison then tell us $\textnormal{inj}_{g(t)}(x)\ge i_0\sqrt{t}$ for some $i_0=i_0(\eta_0,C)>0$.
\end{proof}


\end{section}

\begin{section}{A cut-off function}
In this section we construct a cut-off function on manifolds (not assumed to be complete) evolving by Ricci flow, which helps to localize the integration estimates in section 7.

\begin{lem}\label{l: cutoff}
Given $n\in\mathbb{N}$, $c_0, K>0$, $0<T<1$, $0<R<1$, $0<r<\frac{1}{10}$ with $\beta\sqrt{c_0 T}\le\frac{1}{4}r$, where $\beta=\beta(n)$ is from the Shrinking Lemma, there exists positive constant $C=C(n,K,v_0)$ such that the following holds: 
Let $(M^n,g(t)), t\in[0,T]$ be a smooth Ricci flow such that $B_{g(0)}(x_0,R+r)\subset\subset M$, and on $B_{g(0)}(x_0,R+r)\times[0,T]$,
\begin{equation}
\textnormal{Ric}_{g(t)}(x)\ge -K\quad\textnormal{and}\quad
|\textnormal{Rm}|_{g(t)}\le\frac{c_0}{t},
\end{equation}
and for all $\delta\in[0,r]$ and $x\in B_{g(0)}(x_0,R)$ we have
\begin{equation}\label{add}
Vol_{g(0)}B_{g(0)}(x,\delta)\ge v_0 \delta^n. 
\end{equation}

Then there exists a continuous function $\phi(y,s): M\times[0,T]\longrightarrow \mathbb{R}$ with the following properties:
\begin{description}
 \item[(P1)] $supp\,\phi(\cdot,s)\subset B_{g(0)}(x_0,R)$ for all $s\in[0,T]$.
 \item[(P2)] $\nabla\phi$ exists a.e. and $|\nabla\phi|\le Cr^{-(n+1)}$.
 \item[(P3)] $\Delta\phi\le Cr^{-(2n+2)}$ in the barrier sense. 
 \item[(P4)] $\psp\phi\le Cr^{-n}$.
 \end{description}

Moreover, We have the inclusions:
\begin{equation}\label{e: last}
B_{g(s)}(x_0,R-\tfrac{5}{4}r)\subset B_{g(0)}(x_0,R-r)\subset\{y\in M \,|\, \phi(y,s)=1\} 
\end{equation}
for all $s\in[0,T]$.
\end{lem}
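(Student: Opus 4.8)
The plan is to build $\phi$ as a function of the time-$0$ distance to $x_0$ composed with a fixed cut-off profile, then \emph{mollify} it in space to gain the Laplacian bound, using the non-inflating/non-deflating distance estimates from Section~2.1 to transport all statements to arbitrary time $s\in[0,T]$.

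\smallskip
\textbf{Step 1: the unmollified radial cut-off.} Let $\xi:\mathbb{R}\to[0,1]$ be a fixed smooth function with $\xi\equiv 1$ on $(-\infty,R-r]$, $\xi\equiv 0$ on $[R-\tfrac12 r,\infty)$, and $|\xi'|,|\xi''|\le C r^{-1}, C r^{-2}$ respectively; set $d_0(y)=d_{g(0)}(x_0,y)$ and $\psi_0(y)=\xi(d_0(y))$. This is Lipschitz, supported in $B_{g(0)}(x_0,R-\tfrac12 r)\subset\subset B_{g(0)}(x_0,R)$, and equals $1$ on $B_{g(0)}(x_0,R-r)$; since $\Ric_{g(0)}\ge -K$ on $B_{g(0)}(x_0,R+r)$, the Laplacian comparison gives $\Delta_{g(0)}d_0\le (n-1)\sqrt{K}\coth(\sqrt{K}d_0)\le C$ in the barrier sense on the annulus $\{R-r\le d_0\le R-\tfrac12 r\}$ where $\xi''\le 0$ kills the bad sign, so $\Delta_{g(0)}\psi_0\le C r^{-2}$ in the barrier sense. (A minor point: one should cut off before $d_0$ becomes non-smooth — the cut locus issue — but this is standard, handled by the barrier definition.)

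\smallskip
\textbf{Step 2: spatial mollification.} Fix $y$; using $\Vol_{g(0)}B_{g(0)}(y,\delta)\ge v_0\delta^n$ from \eqref{add} together with the upper volume bound from $\Ric\ge -K$, average $\psi_0$ over $B_{g(0)}(y,r/10)$ against a smooth kernel to obtain $\phi_0$. The number of derivatives costs powers of $r^{-1}$ and the volume lower bound $v_0\delta^n$ converts $L^1$-smallness of increments into pointwise derivative bounds, yielding $|\nabla_{g(0)}\phi_0|\le C r^{-(n+1)}$ and $\Delta_{g(0)}\phi_0\le C r^{-(2n+2)}$ (now as a genuine smooth bound, since $\phi_0$ is $C^\infty$), while preserving $\phi_0\equiv 1$ on $B_{g(0)}(x_0,R-r)$ after shrinking the plateau slightly and $\supp\phi_0\subset B_{g(0)}(x_0,R)$. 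Then simply \emph{declare} $\phi(y,s):=\phi_0(y)$ for all $s$, i.e.\ $\phi$ is time-independent; this forces $\psp\phi\equiv 0\le C r^{-n}$ trivially, so (P4) is free.

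\smallskip
\textbf{Step 3: transporting bounds to time $s$, and the inclusions.} The subtlety is that (P2)--(P4) are asserted with respect to $g(s)$, not $g(0)$. For the gradient: $|\nabla_{g(s)}\phi|_{g(s)}^2 = g(s)^{ij}\partial_i\phi\,\partial_j\phi$, and $\partial_t g=-2\Ric$ with $|\Ric|\le\frac{(n-1)c_0}{t}$ is not integrable at $0$ — so instead I control $g(s)$ versus $g(0)$ on the support only on $[0,T]$ via $|\Rm|\le c_0/t$: actually the clean route is to note $\supp\phi\subset B_{g(0)}(x_0,R)$ and on that region use the distance-distortion lemmas to see that $\partial_t g$ is comparable; more honestly, $|\nabla_{g(s)}\phi|\le e^{C}\,|\nabla_{g(0)}\phi|$ and $\Delta_{g(s)}\phi\le \Delta_{g(0)}\phi + (\text{lower-order in }g(s)-g(0))\cdot|\nabla^2_{g(0)}\phi|$ where the metric discrepancy is bounded because $\int_0^T \frac{c_0}{t}\,dt$ diverges — \emph{this is the main obstacle}, and the resolution is that $\phi$ is only needed with these bounds, so one absorbs the discrepancy into the constant $C$ by instead proving $g(s)\le e^{C(n,c_0)} g(0)$ on $\supp\phi$ using a maximum-principle/ODE argument on $\log$ of the metric ratio restricted to directions where curvature is bounded by $c_0/t$ — integrating $\frac{c_0}{t}$ against $dt$ does diverge, so in fact one must use that on the support $B_{g(0)}(x_0,R)$ we do \emph{not} have a singularity: re-examining, the curvature bound $|\Rm|\le c_0/t$ is the hypothesis, so the cleanest fix is to only claim the conclusion for the \emph{shifted} flow or to note $\beta\sqrt{c_0 T}\le r/4$ keeps the relevant balls nested, and the metrics $g(s),g(0)$ are then uniformly equivalent on $\bigcap_s B_{g(s)}(x_0,R)$ by a direct length comparison using the Shrinking and Expanding Lemmas. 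Finally, the inclusions \eqref{e: last}: the right one is by construction ($\phi_0\equiv 1$ on $B_{g(0)}(x_0,R-r)$); the left one, $B_{g(s)}(x_0,R-\tfrac54 r)\subset B_{g(0)}(x_0,R-r)$, follows from the Shrinking Lemma (Lemma~\ref{l: shrinking}) applied with radius $R-r$ and the hypothesis $\beta\sqrt{c_0 T}\le\tfrac14 r$, which gives $B_{g(s)}(x_0,(R-r)-\beta\sqrt{c_0 s})\subset B_{g(0)}(x_0,R-r)$ and $(R-r)-\beta\sqrt{c_0 s}\ge R-r-\tfrac14 r = R-\tfrac54 r$.
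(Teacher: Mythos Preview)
Your Step~3 contains a genuine gap that you correctly flag but do not resolve. The hypotheses give only $\Ric_{g(t)}\ge -K$ and $|\Rm|_{g(t)}\le c_0/t$. The first yields $g(t)\le e^{2Kt}g(0)$, hence $g(t)^{-1}\ge e^{-2Kt}g(0)^{-1}$, which bounds $|\nabla_{g(s)}\phi|$ from \emph{below} in terms of $|\nabla_{g(0)}\phi|$, not from above. For the direction you need, namely $g(s)\ge c\,g(0)$, you would have to integrate an upper Ricci bound, and the only one available is $(n-1)c_0/t$, whose integral diverges at $t=0$. The Shrinking and Expanding Lemmas give \emph{distance} comparisons, not pointwise comparison of the metric tensors, so they do not rescue $|\nabla_{g(s)}\phi|\le C|\nabla_{g(0)}\phi|$; the same obstruction (and worse, since Christoffel symbols enter) blocks the Laplacian bound. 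Thus for a time-independent $\phi$ built from $d_{g(0)}$, (P2) and (P3) at time $s$ cannot be obtained from the stated hypotheses.

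The paper circumvents this entirely by making $\phi$ genuinely time-dependent, built out of \emph{time-$s$} distance functions. One takes a maximal $\tfrac{r}{4e^K}$-separated net $\{p_k\}_{k=1}^{N}$ in the annulus $B_{g(0)}(x_0,R)\setminus B_{g(0)}(x_0,R-\tfrac14 r)$, bounds $N\le C r^{-n}$ using the volume hypothesis, and sets $f_k(y,s)=f\bigl(d_{g(s)}(p_k,y)/r\bigr)$, then $\phi(y,s)=F\bigl(1-\sum_k f_k(y,s)\bigr)$ for suitable profiles $f,F$. Since $|\nabla_{g(s)} d_{g(s)}(p_k,\cdot)|=1$ and $\Delta_{g(s)} d_{g(s)}(p_k,\cdot)\le C/r$ by Laplacian comparison (using $\Ric_{g(s)}\ge -K$ directly at time $s$), the bounds (P2)--(P4) follow by summing $N\le Cr^{-n}$ terms; the powers of $r$ in the statement come precisely from this counting. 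The Shrinking and Expanding Lemmas are used only to control the \emph{supports} (the covering Claim and the inclusions \eqref{e: last}), where distance comparison suffices. This is the key idea your approach is missing.
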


\begin{proof}
Let $f:\mathbb{R}\longrightarrow\mathbb{R}$ be a non-increasing smooth function such that $f(z)=1$ for all $z<\frac{1}{4}$ and $f(z)=0$ for all $z>\frac{1}{2}$. Let $F:\mathbb{R}\longrightarrow\mathbb{R}$ be a non-decreasing and convex smooth function such that $F(z)=0$ for all $z\le 0$ and $F(1)=1$. 
Let $C_0$ be a constant such that $|f'|,|f''|,|F'|,|F''|\le C_0$. Hereafter we use the same letter $C$ to denote the constants depending on $K$, $v_0$, $n$. 

Let $\{p_k\}_{k=1}^{N}$ be a maximal $\frac{r}{4e^K}$-separated set in the annulus $A:=B_{g(0)}(x_0,R)-B_{g(0)}(x_0,R-\frac{1}{4}r)$ with respect to $g(0)$.
By a $\varepsilon$-separated set we mean a set in which the points are at least $\varepsilon$-distant from each other. 
It's clear that the $\varepsilon/2$-balls of points in a $\varepsilon$-separated set are disjoint pairwise.  
By volume comparison we see that $Vol_{g(0)}B_{g(0)}(x_0,R)\le C$, 
and furthermore by $\eqref{add}$ $Vol_{g(0)}B_{g(0)}(p_k,\frac{r}{4e^K})\ge Cr^n$. Hence we have $N\le Cr^{-n}$.

\begin{claim}\label{l: covering}
$A\subset\bigcup\limits_{k=1}^{N}B_{g(s)}(p_k,\frac{r}{4})$ for all $s\in[0,T]$.
\end{claim}
\begin{proof}[Proof of Claim \ref{l: covering}]
By the choice of $\{p_k\}_{k=1}^N$ we see that $A\subset\bigcup\limits_{k=1}^{N}B_{g(0)}(p_{k},\frac{r}{4e^{K}})$. For each $p_k$, the triangle inequality implies that $B_{g(0)}(p_k,\frac{r}{2})\subset\subset B_{g(0)}(x_0,R+r)$ where $|\textnormal{Rm}|_{g(s)}\le\frac{c_0}{s}$ and $\textnormal{Ric}_{g(s)}\ge-K$ holds true for all $s\in[0,T]$. 
Applying the Shrinking Lemma to $g(t)$, we find that $B_{g(s)}(p_k,\frac{r}{2}-\beta\sqrt{c_0 s})\subset B_{g(0)}(p_k,\frac{r}{2})$ for all $s\in[0,T]$ and in particular $B_{g(s)}(p_k,\frac{r}{4})\subset B_{g(0)}(p_k,\frac{r}{2})$ due to $\beta\sqrt{c_0 T}\le\frac{1}{4}r$. So $\Ric\ge-K$ holds on $B_{g(s)}(p_k,\frac{r}{4})$, which gives the condition we need in order to apply the Expanding Lemma to the Ricci flow on $B_{g(s)}(p_k,\frac{r}{4})\times [0,s]$, giving $B_{g(s)}(p_k,\frac{r}{4})\supset B_{g(0)}(p_k,\frac{r}{4e^K})$, and thus proves the claim.
\end{proof}

By the Shrinking Lemma and triangle inequality, we have 
$B_{g(s)}(p_k,\frac{r}{2})\subset B_{g(0)}(p_k,r)\subset B_{g(0)}(x_0,R+r)$. 
In view of this together with the definition of $f$, we define the following continuous function on $M$:
\begin{equation}
f_k(y,s)=
 \left\{\begin{matrix}
 f\left(\frac{d_{g(s)}(p_k,y)}{r}\right)\quad  & \textnormal{for}\quad y\in B_{g(0)}(p_k,r)\;;\\
 0\quad & \textnormal{for}\quad y\notin B_{g(0)}(p_k,r).
 \end{matrix}\right.
\end{equation}

By Claim $\ref{l: covering}$, for each point $y\in A$ and $s\in[0,T]$, there is some $k$ such that $y\in B_{g(s)}(p_k,\frac{r}{4})$, $f_k(y,s)=1$ and $F(1-\sum_{k=1}^{N}f_k(y,s))=0$. Based on this we define the following continuous function on $M$:
\begin{equation}
\phi(y,s)=
 \left\{\begin{matrix}
F(1-\sum\limits_{k=1}^{N}f_k(y,s))\quad & \textnormal{for}\quad y\in B_{g(0)}(x_{0},R)\;;\\
0\quad & \textnormal{for}\quad y\notin B_{g(0)}(x_{0},R)\;.
 \end{matrix}\right.
\end{equation}



It's clear that $\phi(y,s)$ satisfies (P1). 
Below we abbreviate $d_{g(s)}(p_k,y)$ by $d_k$,  $f'(\frac{d_{g(s)}(p_k,y)}{r})$ by $f'_k$, and $f''(\frac{d_{g(s)}(p_k,y)}{r})$ by $f''_k$.
Using that
\begin{equation}
\nabla\phi=-F'\cdot\sum_{k=1}^N f'_k\cdot r^{-1}\cdot\nabla d_k,
\end{equation}
and taking into account that $\nabla d_k$ exists a.e. with $|\nabla d_k|=1$, and $N\le C\cdot r^{-n}$, we see that $\nabla\phi$ exists a.e. and
\begin{equation}
|\nabla\phi|\le C\cdot r^{-(n+1)}.
\end{equation}

To estimate $\ps\phi(y,s)$ and $\Delta \phi(y,s)$, we may assume $y \in B_{g(s)}(p_k,\frac{1}{2}r)-B_{g(s)}(p_k,\frac{1}{4}r)$ without loss of generality. Because otherwise  $f'(\frac{d_k(y,s)}{r})=0$, and hence $\ps\phi(y,s)=\Delta \phi(y,s)=0$. By the Shrinking Lemma and the choice of $p_k$ we have 
\begin{equation}
B_{g(s)}(p_k,\frac{1}{2}r)\subset B_{g(0)}(p_k,r)\subset B_{g(0)}(x_0,R+r).
\end{equation}
So the minimizing geodesic connecting $y$ and $p_k$ with respect to $g(s)$ remains within $B_{g(0)}(x_0,R+r)$ where $\textnormal{Ric}_{g(s)}\ge -K$. Hence by the Laplacian comparison and noting that $d_{g(s)}(y,p_k)\ge\frac{1}{4}r$, we have
\begin{equation}
\Delta d_{g(s)}(p_{k},y)\le (n-1)\sqrt{K}\textnormal{coth}(\sqrt{K}d_{g(s)}(p_{k},y))\le\frac{C}{r}
\end{equation}
in the barrier sense. Then using that
\begin{equation}
\Delta\phi=F''|\sum_{k=1}^N f'_k\cdot r^{-1}\cdot\nabla d_k|^2-F'\cdot\sum_{k=1}^N(f''_k\cdot r^{-2}\cdot|\Delta d_k|^2+f'_k\cdot r^{-1}\cdot\Delta d_k),
\end{equation}
and noting $f'\le 0$, $F'\ge 0$, we can estimate
\begin{equation}
\Delta\phi\le C\cdot r^{-(2n+2)}.
\end{equation}
We see from the Ricci flow equation that
\begin{equation}
\psp d_{g(s)}(p_k,y)\le K d_{g(s)}(p_k,y)\le \frac{1}{2}Kr,
\end{equation}
and using that 
\begin{equation}
\psp\phi=-F'\sum_{k=1}^N f'_k\cdot r^{-1}\cdot\psp d_k,
\end{equation}
we obtain
\begin{equation}
\psp\phi\le C\cdot r^{-n}.
\end{equation}

It remains to prove the inclusion $\eqref{e: last}$. The first inclusion is a consequence of the Shrinking Lemma and $\beta\sqrt{c_0 T}\le\frac{1}{4}r$. To prove the second inclusion, we note by triangle inequality that 
\begin{equation}
B_{g(0)}(x_0,R-r)\cap \bigcup_{k=1}^N B_{g(0)}(p_k,\frac{3}{4}r)=\emptyset,
\end{equation}
and by the Shrinking Lemma, 
\begin{equation}
B_{g(s)}(p_k,\frac{1}{2}r)\subset B_{g(0)}(p_k,\frac{1}{2}r+\beta\sqrt{c_0 T})\subset B_{g(0)}(p_k,\frac{3}{4}r)
\end{equation}
for each $k$ and $s\in[0,T]$. Thus for all $s\in[0,T]$,
\begin{equation}\label{ten}
B_{g(0)}(x_0,R-r)\cap \bigcup_{k=1}^N B_{g(s)}(p_k,\frac{1}{2}r)=\emptyset.
\end{equation}
Then the second inclusion in $\eqref{e: last}$ follows immediately from $\eqref{ten}$ and the definitions of $f$ and $\phi$.

\end{proof}

\end{section}


\begin{section}{Heat kernel estimates for Ricci flow in expansion}

\begin{subsection}{An upper bound for the heat kernel of Ricci flow}
Let $(M,g(t))$, $t\in[0,T]$, be a complete Ricci flow. Hereafter we denote by $G(x,t;y,s)$, with $x,y\in M$, $0\le s<t\le T$, the heat kernel corresponding to the backwards heat equation coupled with the Ricci flow. This means that for any fixed $(x,t)\in M\times [0,T]$ we have
\begin{equation}
(\ps + \Delta_{y,s})G(x,t;y,s)=0\,\,\,\textnormal{and}\,\,\, \lim\displaylimits_{s\nearrow t}G(x,t;y,s)=\delta_x(y)
\end{equation}
Then for any fixed $(y,s)\in M\times[0,T]$ we can compute that $G(\cdot,\cdot;y,s)$ is the heat kernel associated to the conjugate equation
\begin{equation}
(\frac{\partial}{\partial t}-\Delta_{x,t}-\scal_{g(t)})G(x,t;y,s)=0 \,\,\,\textnormal{and}\,\,\,\lim\displaylimits_{t\searrow s}G(x,t;y,s)=\delta_y(x).
\end{equation}
Note that in literatures it is more common to consider the fundamental solution of the conjugate heat equation $\frac{\partial}{\partial t}u+\Delta_{x,t}u-\scal u=0$. 
$G(x,t;y,s)$ has the following property
\begin{equation}\label{e: crucial}
\int\displaylimits_{M}G(x,t;y,s)\,d_t x=1 \,\,\,\textnormal{for all}\,\,\, 0\le s< t\le T.
\end{equation}
In the compact case, this follows from the following simple calculation:
\begin{equation}
 \pt\int_M G(x,t;y,s)\,d_tx=\int_M((\Delta_{x,t}+\scal_{g(t)})G(x,t;y,s)-G(x,t;y,s)\,\scal_{g(t)})\,d_t x=0.
\end{equation}
The general case follows using an exhaustion and limiting argument.

The heat kernel $G$ has a Gaussian bound by the following proposition from \cite{almost}. 

\begin{prop}\label{prop: heat kernel}Given $n\in\mathbb{N}$ and $A>0$, there is a constant $C=C(n,A)<\infty$ such that the following holds: Let $(M^n,g(t))$, $t\in[0,T]$, be a complete Ricci flow satisfying
\begin{equation}\label{invariant}
|\textnormal{Rm}|_{g(t)}\le\frac{A}{t}\;\;\;\textnormal{and}\;\;\; Vol_{g(t)}B_{g(t)}(x,\sqrt{t})\ge\frac{(\sqrt{t})^n}{A}
\end{equation}
for all $(x,t)\in M\times(0,T]$. Then 
\begin{equation}\label{lotus}
G(x,t;y,s)\le\frac{C}{(t-s)^{\frac{n}{2}}}exp(-\frac{d_s^2(x,y)}{C(t-s)})\,\,\,\textnormal{for all}\,\,\, 0\le s< t\le T.
\end{equation}
\end{prop}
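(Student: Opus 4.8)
The plan is to derive the Gaussian upper bound on $G(x,t;y,s)$ from an on-diagonal heat kernel bound together with the volume non-collapsing hypothesis, using the now-standard semigroup machinery adapted to the Ricci flow background. First I would reduce to an on-diagonal estimate of the form $G(x,t;y,t/2)\le C(t-s)^{-n/2}$ (and more generally $G(x,t;y,s)\le C(t-s)^{-n/2}$) for all $x,y$. Such a bound follows from a Sobolev or Faber--Krahn inequality along the flow: the curvature bound $|\Rm|_{g(t)}\le A/t$ controls the distortion of distances and volumes on a parabolic scale $\sqrt{t}$ via the Expanding and Shrinking Lemmas, and combined with the volume lower bound $Vol_{g(t)}B_{g(t)}(x,\sqrt t)\ge (\sqrt t)^n/A$ this yields a uniform (scale-invariant) Sobolev constant on the rescaled flows $\hat g = t^{-1}g(ct)$. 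Feeding the Sobolev inequality into a Moser iteration / Nash-type argument for the conjugate heat equation $(\partial_t - \Delta_{x,t} - \scal)G = 0$ then gives the on-diagonal decay; here the potential term $\scal$ is harmless because $|\scal|\le n(n-1)A/t$ is integrable against $dt$ over a time interval with bounded ratio $t/s$, contributing only a multiplicative constant $\exp(\int |\scal|\,dt)$.

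Next I would upgrade the on-diagonal bound to the full Gaussian estimate using an integrated (Davies-type) exponential weight argument, or equivalently Grigor'yan's trick. Concretely, one tests the equation against $G$ times $\exp(\xi)$ for a suitable Lipschitz function $\xi$ approximating $\lambda\, d_s(x,\cdot)$ with $|\nabla\xi|\le\lambda$, bounds the resulting energy inequality using the reproduction formula $\eqref{reproduction}$ and the $L^1$-conservation $\eqref{e: crucial}$, optimizes over $\lambda$, and obtains the factor $\exp(-d_s^2(x,y)/(C(t-s)))$. A technical point is that distances measured at different times are comparable up to the parabolic scale — this is exactly what the Shrinking and Expanding Lemmas provide when $d_s(x,y)$ is of order $\sqrt{t-s}$ or larger, so one can freely pass between $d_s$, $d_t$, and intermediate-time distances at the cost of universal constants absorbed into $C$. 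For $d_s(x,y)\lesssim\sqrt{t-s}$ the claimed bound is trivially implied by the on-diagonal estimate.

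Since this proposition is quoted verbatim from \cite{almost}, the cleanest route in the paper is simply to cite it, noting that hypothesis $\eqref{invariant}$ is scale-invariant and matches exactly the assumptions there; but if a self-contained argument is wanted, the two-step scheme above (scale-invariant Sobolev/Faber--Krahn $\Rightarrow$ on-diagonal bound via Nash--Moser $\Rightarrow$ Gaussian off-diagonal bound via exponential weights) is the standard one. The main obstacle is establishing the uniform Sobolev constant along the flow: one must verify that the hypotheses $|\Rm|\le A/t$ and $Vol\,B(x,\sqrt t)\ge (\sqrt t)^n/A$ survive parabolic rescaling and are inherited on geodesic balls of all radii $\le\sqrt t$ (using Bishop--Gromov for the upper volume bound and the non-collapsing hypothesis plus volume comparison for the lower bound), so that a genuinely scale-free functional inequality is available. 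Once that is in hand, the remaining steps are routine.
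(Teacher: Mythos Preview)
Your proposal is on target: the paper does not prove this proposition at all but simply quotes it from \cite{almost}, exactly as you anticipated. Your additional two-step sketch (scale-invariant Sobolev/Faber--Krahn $\Rightarrow$ on-diagonal Nash--Moser bound $\Rightarrow$ off-diagonal Gaussian via Davies/Grigor'yan weights) is the standard route and goes beyond what the paper itself supplies; it is a reasonable outline, though of course the details (uniform Sobolev constant along the flow, handling the $\scal$ potential) would need to be filled in if a self-contained proof were required.
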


\begin{remark}
We note that $\eqref{invariant}$ is invariant under rescaling and time shifting in the sense that for the Ricci flow $\hat{g}(\tau)=\frac{1}{t-s}g(\tau(t-s)+s), \tau\in[0,1]$, where $0\le s<t\le T$, the condition $\eqref{invariant}$ still holds true. The right-hand side of the second bound in $\eqref{invariant}$ may change by a controlled factor due to a volume comparison argument.     
\end{remark}




\end{subsection}

\begin{subsection}{Generalized heat kernel of Ricci flow in extension and its upper bound}
\begin{defn}(Ricci flow in expansion)
We say $(\{M_j\}_{j=1}^m,\{g_j(t)\}_{j=1}^{m},\nu)$ is a Ricci flow in expansion, if for each $j$, $(M_j,g_j(t))$ is a complete Ricci flow defined on $[t_j,t_{j+1}]$ with $t_0=0$, $t_{j+1}=\nu\,t_j$ and $M_0\supset M_1\supset M_2\supset...\supset M_{m}$. Moreover, at each $t_{j+1}$ we have $g_{j+1}(t_{j+1})\ge g_j(t_{j+1})$ everywhere on $M_{j+1}$. 
\end{defn}

We call each $t_j$ a expanding time. In the following discussion we will often need to distinguish metrics $g_{j-1}(t_j)$ and $g_j(t_j)$. Without ambiguity, we use $t_j^+$ whenever referring to any geometric quantity with respect to $g_j(t_j)$, and $t_j^-$ for $g_{j-1}(t_j)$ respectively. For example, $B_{t_j^+}(x,r)$ denotes a $r$-ball centered at $x$ with respect to $g_j(t_j)$ and $M_{t_j^+}$ denotes $M_j$.



\begin{defn}\label{gene}
(Generalized heat kernel) Let $(\{M_j\}_{j=1}^{n},\{g_j(t)\}_{j=0}^{n},\nu)$ be a Ricci flow in expansion. For any $x\in M_i$ and $t\in(t_i,t_{i+1}]$, we define the generalized heat kernel $G(x,t;\cdot,\cdot)$ as follow: First, $G(x,t;y,s)$ is the standard heat kernel for all $y\in M_i$ and $s\in [t_i,t)$. Next, suppose $G(x,t;z,s')$ has been defined for all $z\in M_{j}$ and $s'\in [t_{j},t_{j+1})$ for some $j\le i-1$. Then for $y\in M_{j-1}$ and $s\in [t_{j-1},t_{j})$, we set
\begin{equation}
G(x,t;y,s)=\int_{M_{t_j^+}}G(x,t;z,t_{j})G(z,t_{j};y,s)d_{t_j^-}z.
\end{equation}
Inductively, $G(x,t;\cdot,\cdot)$ is defined on $(\bigcup_{j=0}^{i-1}M_j\times[t_j,t_{j+1}))\cup M_i\times[t_i,t)$  (see Figure 1). It's easy to see that $G(x,t;\cdot,\cdot)$ is continuous on all over its domain, and smooth on each $M_j\times(t_j,t_{j+1})$ for $j\le i-1$ and on $M_i\times(t_i,t)$.
\end{defn}

The goal of this section is to derive a Gaussian bound for the generalized heat kernel. A crucial fact in the proof is the $L^1$-norm of $G(\cdot,t_j;y,t_{j-1})$ is not bigger than 1 for all $t$, that is,
\begin{equation}\label{e: crucial2}
\int_{M_{t_j^+}}G(x,t_{j};y,t_{j-1})\,d_{t_j^-}x\le\int_{M_{t_j^-}}G(x,t_{j};y,t_{j-1})\,d_{t_j^-}x=1
\end{equation}
for any $y\in M_{j-1}$.


\begin{figure}[t]
\centering
\includegraphics[width=1.0\linewidth]{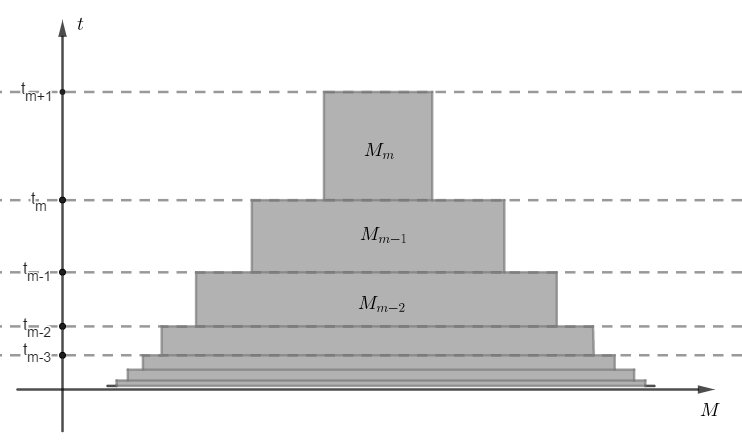}
\caption{Ricci flow in expansion}
\label{f:RF}
\end{figure}

\begin{prop}\label{generalized heat kernel}
Given $n\in\mathbb{N}$, $A>0$, and $\nu>1$, there is a constant $C=C(n,A,\nu)<\infty$ such that the following holds: Let $(\{M_j\}_{j=0}^{m},\{g_j(t)\}_{j=0}^{m},\nu)$ be a Ricci flow in expansion such that for each $j$ we have
\begin{equation}\label{trader}
|\textnormal{Rm}|_{g_j(t)}\le\frac{A}{t}\,\,\,\textnormal{and}\,\,\, Vol_{g_j(t)}B_{g_j(t)}(x,\sqrt{t})\ge\frac{t^\frac{n}{2}}{A}
\end{equation}
for all $x\in M_j$ and $t\in[t_j,t_{j+1}]$. Then for any pairs $(x,t)$ and $(y,s)$ such that $G(x,t;y,s)$ is well defined as above, we have
\begin{equation}\label{e: heat kernel}
G(x,t;y,s)\le\frac{C}{(t-s)^{\frac{n}{2}}}exp\left(-\frac{d_{s^+}^2(x,y)}{C(t-s)}\right).
\end{equation}
\end{prop}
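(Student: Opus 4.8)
The plan is to prove the Gaussian bound \eqref{e: heat kernel} by induction on the number of expanding times separating $(x,t)$ from $(y,s)$, using Proposition \ref{prop: heat kernel} as the base case and the reproduction formula in Definition \ref{gene} together with the $L^1$-bound \eqref{e: crucial2} for the inductive step. The key point is to set up an induction that does not lose an exponentially growing constant at each expanding time; since there can be arbitrarily many expanding times $t_1<t_2<\cdots$, but they satisfy $t_{j+1}=\nu t_j$, the total number of expanding times inside any fixed bounded time window is bounded in terms of $\nu$ and the ratio of endpoints — this is the structural feature that must be exploited.

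First I would dispose of the case where $(x,t)$ and $(y,s)$ lie in the same piece $M_i$, which is exactly Proposition \ref{prop: heat kernel} (note that the hypotheses \eqref{trader} are precisely \eqref{invariant} restricted to each piece), giving $G(x,t;y,s)\le C(t-s)^{-n/2}\exp(-d_s^2(x,y)/(C(t-s)))$. For the general case, suppose $x\in M_i$, $t\in(t_i,t_{i+1}]$, $y\in M_{j-1}$, $s\in[t_{j-1},t_j)$ with $j-1<i$. Unwinding Definition \ref{gene} one step at a time, $G(x,t;y,s)$ is an iterated integral against the standard heat kernels on the intermediate pieces $M_{j-1},M_j,\dots,M_i$. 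I would estimate from the outermost integral inward: bound $G(x,t;z,t_j)$ using the same-piece Gaussian bound on $M_{j-1}$ … actually, since $z\in M_{t_j^+}=M_j$ and $x\in M_i$ with possibly more expanding times between, the cleaner approach is a single induction. Let me state it as: for $(x,t)$ in piece $M_i$ and any $(y,s)$ in the domain, assume by induction that for every $z\in M_{t_j^+}$ one has $G(x,t;z,t_j)\le C(t-t_j)^{-n/2}\exp(-d_{t_j^+}^2(x,z)/(C(t-t_j)))$; then
\begin{equation*}
G(x,t;y,s)=\int_{M_{t_j^+}}G(x,t;z,t_j)\,G(z,t_j;y,s)\,d_{t_j^-}z,
\end{equation*}
where $G(z,t_j;y,s)$ is the standard heat kernel on $M_{j-1}$, hence bounded by Proposition \ref{prop: heat kernel}. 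The integral is a convolution of two Gaussians, and the semigroup-type estimate (combined with the distance comparisons coming from the Ricci flow in expansion, namely $g_j(t_j)\ge g_{j-1}(t_j)$ so that $d_{t_j^-}(z,y)\le d_{t_j^+}(z,y)$ only in one direction — here I need the inequality controlling $d_{t_j^-}$ from below, which follows since shrinking the metric only decreases distances, giving $d_{t_j^-}(x,y)\le d_{t_j^+}(x,z)+d_{t_j^-}(z,y)$) yields a Gaussian bound in $d_{t_j^-}(x,y)$ with $(t-s)$ in place of $(t-t_j)$, at the cost of multiplying the constant by a factor depending only on $n$ and $A$. Here Lemma \ref{calculation: whole integral} (to absorb the integral of one Gaussian factor, using that $\Ric\ge -(n-1)A/t_j\ge -(n-1)A/t_{j-1}$ on the relevant ball from \eqref{trader}) and Lemma \ref{calculation: inequity} (to convert a Gaussian at scale $t-t_j$ into one at scale $t-s$, valid since $t-s\le d^2$ when $d$ is large, and handling small $d$ trivially) do the bookkeeping.

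The inductive step multiplies the constant by a fixed factor $\Lambda=\Lambda(n,A)$ per expanding time crossed, so after $k$ steps the constant is $\Lambda^k C_0$. The crucial observation that keeps this finite and uniform is that if $(y,s)$ and $(x,t)$ are separated by $k$ expanding times $t_{j-1}<t_j<\cdots<t_{i}<t$, then since $t_{\ell+1}=\nu t_\ell$ we have $t/s\ge t_{i}/t_{j-1}=\nu^{\,k-1}$ (roughly), hence $k\le 1+\log_\nu(t/s)$. But $t/s$ can be arbitrarily large, so this alone is not enough — instead I would improve the per-step estimate: crossing the expanding time $t_j$ costs a factor only on the portion of the Gaussian constant, and more importantly one should track that the heat kernel between times $t_{\ell}$ and $t_{\ell+1}$ "resets" the scale. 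The right formulation, following \cite[Proof of Proposition 4.1]{almost} in spirit, is to prove the bound with a constant $C$ that works simultaneously for all crossings by choosing $\nu$-dependent geometric-series summation: since consecutive expanding times are at ratio $\nu$, the accumulated error is controlled by a convergent series $\sum \Lambda \nu^{-cn/2}$ after rescaling each piece to unit time scale via the Remark following Proposition \ref{prop: heat kernel}. Concretely, rescaling the flow on $[t_\ell,t_{\ell+1}]$ by $\frac{1}{t_{\ell+1}-t_\ell}$ makes \eqref{trader} scale-invariant, and the Gaussian bound on that piece has a universal constant; stitching with \eqref{e: crucial2} (which is scale-invariant) then gives a product bound whose logarithm telescopes.

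The main obstacle I anticipate is precisely controlling the constant across an unbounded number of expanding times — naively the induction gives $\Lambda^k$, which blows up. Overcoming this requires using the geometric spacing $t_{j+1}=\nu t_j$ in an essential way: each crossing happens at a time scale that is a definite multiple of the previous one, so the "diffusion" during the interval $[t_j,t_{j+1}]$ acts at a coarser and coarser scale, and the corresponding Gaussian factors, once normalized, contribute a geometrically decaying rather than constant correction. Making this rigorous — i.e. showing the infinite product of per-step factors converges to a constant $C(n,A,\nu)$ — is the technical heart of the proof; the rest is the routine Gaussian convolution estimate handled by Lemmas \ref{calculation: whole integral}, \ref{calculation: partial integral} and \ref{calculation: inequity}, together with the one-sided distance comparison $d_{t_j^-}\le d_{t_j^+}$ furnished by the definition of a Ricci flow in expansion.
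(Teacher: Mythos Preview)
Your proposal has the right instincts but a real gap at exactly the point you identify as the ``main obstacle.'' You set up a multiplicative recursion: each expanding time crossed multiplies the Gaussian constant by some $\Lambda(n,A)$, giving $\Lambda^k$ after $k$ crossings. You then assert that the geometric spacing $t_{j+1}=\nu t_j$ should make an infinite product converge, but you never produce a mechanism for this, and in fact a straight Gaussian-convolution step does \emph{not} yield a per-step factor tending to $1$; the constant $\Lambda$ is bounded away from $1$ regardless of $\nu$, so the product diverges. The appeals to ``geometric decay'' and ``telescoping logarithms'' remain wishful.

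The paper avoids this by replacing your multiplicative induction with an \emph{additive} one. It first rescales so the full interval is $[0,1]$ with expanding times $\tau_1>\tau_2>\cdots>\tau_k$, and observes (using only \eqref{e: crucial2}) that $G(x,1;\cdot,\tau_j)\le C_0$ uniformly in $j$ --- a crude $L^\infty$ bound with no Gaussian decay. Then, to capture the decay, it fixes a large $d$, introduces balls $B_j=B_{\tau_j^+}(x,r_j)$ with radii $r_j=4d(1-(\sqrt[4]{\nu})^{-j})$ increasing to $4d$, and tracks $a_j:=\sup_{M_{\tau_j^+}\setminus B_j}G(x,1;\cdot,\tau_j)$. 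Splitting the reproduction integral over $B_j$ and its complement gives
\[
a_{j+1}\le a_j+C\exp\Bigl(-\tfrac{(\delta r_j)^2}{C(\tau_j-\tau_{j+1})}\Bigr)
\le a_j+C\exp\Bigl(-\tfrac{d^2(\sqrt{\nu})^{\,j}}{C}\Bigr),
\]
because $\delta r_j\sim d(\sqrt[4]{\nu})^{-j}$ while $\tau_j-\tau_{j+1}\le\nu^{-j}$. The correction terms form a convergent series (the exponent grows like $(\sqrt{\nu})^j$), so $a_j\le C\exp(-d^2/C)$ uniformly in $j$. This handles the case where $t,s$ are both expanding times; the general case is then a finite splitting argument with at most two extra integrations. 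The point you are missing is this near/far decomposition with carefully chosen nested radii, which converts the problem from controlling a product of constants to summing a rapidly convergent series.
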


\begin{remark}\label{l: rescaling}
It may seem surprising that it is not necessary to assume the equality of metrics $g_{j}(t_{j+1})$ and $g_{j+1}(t_{j+1})$ on $M_{j+1}$. But as we will see in the proof below, the expanding condition $g_{j}(t_{j+1})\le g_{j+1}(t_{j+1})$ is compatible with the application of the Shrinking Lemma and hence sufficient for us to get the conclusion. In later application to the proof of Theorem $\ref{t: theorem1}$, the metric $g_{j+1}(t_{j+1})$ is the conformally changed metric of $g_j(t_{j+1})$, which is not less than $g_j(t_{j+1})$ everywhere on $M_{j+1}$, and agrees with it on a smaller region.  
\end{remark}

\begin{proof}
For notational convenience, the same letter $C$ will be used to denote constants depending on $n$, $A$ and $\nu$.

\textbf{Part 1}
Let us first establish the estimate $\eqref{e: heat kernel}$ for $t=t_{k+i}$ and $s=t_i$ for some $i$ and $k$. Rescaling the flow $g(t), t\in[t_{i-1},t_{k+i}]$ to $\hat{g}(\tau)=\frac{1}{t_{k+i}-t_{i-1}}g(\tau(t_{k+i}-t_{i-1})+t_{i-1}), \tau\in[0,1]$, the ``expanding time" sequence 
$$t_{k+i}>t_{k+i-1}>\cdots>t_{k+i-j}>\cdots>t_{i+1}>t_i>t_{i-1}$$ becomes 
$$1=\tau_0>\tau_1>\cdots>\tau_j>\cdots>\tau_{k-1}>\tau_k>\tau_{k+1}=0$$
where $\tau_j:=\frac{t_{k+i-j}-t_{i-1}}{t_{k+i}-t_{i-1}}=\frac{\nu^{k-j+1}-1}{\nu^{k+1}-1}$, for $j=0,1,2,...,k+1$. Then for each $j$, we have
\begin{equation}\label{good}
\tau_{j}-\tau_{j+1}=\frac{\nu^{k-j+1}-\nu^{k-j}}{\nu^{k+1}-1}\le\nu^{-j}.
\end{equation}
To show $\eqref{e: heat kernel}$ for $t=t_{k+i}$ and $s=t_i$, it's equivalent to show the following inequality under the new flow: 
\begin{equation}\label{that}
G(x,1;y,\tau_k)\le C exp\left(-\frac{d_{\tau_{k}^+}^2(x,y)}{C}\right).
\end{equation}
We note that by Remark $\ref{l: rescaling}$, the new flow $\hat{g}(\tau)$ satisfies the curvature and volume conditions in $\eqref{trader}$.

Since $\tau_1\le\nu^{-1}$, applying the Gaussian bound $\eqref{lotus}$ for standard heat kernel we find that
\begin{equation}\label{well}
G(x,1;\cdot,\tau_1)\le\frac{C}{(1-\tau_1)^{\frac{n}{2}}}\le C_0:=\frac{C}{(1-\nu^{-1})^{\frac{n}{2}}}.
\end{equation}
Let $C_0$ be fixed hereafter. Suppose by induction that $G(x,1;\cdot,\tau_j)\le C_0$ for some $j\ge 1$. Then for any $z$ such that $G(x,1;z,\tau_{j+1})$ is well defined, we have
\begin{equation}
\begin{split}
G(x,1;z,\tau_{j+1})&=\int_{M_{\tau_j^+}} G(x,1;w,\tau_j)G(w,\tau_j;z,\tau_{j+1})d_{\tau_j^-}w\\
&\le C_0\int_{M_{\tau_j^+}}G(w,\tau_j;z,\tau_{j+1})d_{\tau_j^-}w\le C_0
\end{split}
\end{equation}
where we used $\eqref{e: crucial2}$ in the last inequality.
So by induction we obtain 
\begin{equation}\label{cake}
G(x,1;\cdot,\tau_j)\le C_0,
\end{equation}
for all $j=1,2,...,k$. In particular, we have 
$G(x,1;\cdot,\tau_k)\le C_0$.
This implies $\eqref{that}$ when $d_{\tau_{k}^+}(x,y)$ is controlled. So it remains to show $G(x,1;y,\tau_k)\le\ exp\left(-\frac{d^2}{C}\right)$ whenever $d_{\tau_k^+}(x,y)\ge 4d(1-(\sqrt[4]{\nu})^{-1})$ for a large number $d$ (which we will specify in the course of proof).
For each $j=1,2,...,k$, let 
\begin{equation}
r_j=4d(1-(\sqrt[4]{\nu})^{-j}).
\end{equation}
Then set $B_j=B_{\tau_j^+}(x,r_j)$, $C_j=M_{\tau_j^+}-B_j$ and
\begin{equation}
a_j:=\sup_{C_j} G(x,1;\cdot,\tau_j). 
\end{equation}
Then it suffices to show the following Claim:

\begin{claim}\label{yes}
$a_{j}\le C exp(-\frac{d^2}{C})$, for some constant $C$ which is uniform for all $j=1,2,...,k$.
\end{claim}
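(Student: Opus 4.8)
The plan is to prove Claim~\ref{yes} by a downward induction on $j$, parallel to the upward induction that produced the uniform bound $\eqref{cake}$, but now tracking the exponentially small ``off-diagonal'' contribution. We already know the uniform bound $G(x,1;\cdot,\tau_j)\le C_0$ everywhere. The point is to show that on the complement $C_j$ of the ball $B_j=B_{\tau_j^+}(x,r_j)$ of radius $r_j=4d(1-(\sqrt[4]{\nu})^{-j})$, the kernel is in fact of size $C\exp(-d^2/C)$. I would start the induction at $j=1$: since $\tau_1\le\nu^{-1}$ and $1-\tau_1\ge 1-\nu^{-1}$ is controlled, the standard Gaussian bound $\eqref{lotus}$ gives $G(x,1;y,\tau_1)\le C\exp(-d_{\tau_1^+}^2(x,y)/C)$ directly, and on $C_1$ we have $d_{\tau_1^+}(x,y)\ge r_1=4d(1-(\sqrt[4]{\nu})^{-1})$, so $a_1\le C\exp(-d^2/C)$ after absorbing constants (using the inequality already recorded in Lemma~\ref{calculation: inequity}-style manipulations, or just monotonicity of $\exp(-\cdot)$).

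For the inductive step, suppose $a_j\le C\exp(-d^2/C)$, and consider a point $z\in C_{j+1}$, i.e. $d_{\tau_{j+1}^+}(x,z)\ge r_{j+1}$. Write the reproduction formula
\begin{equation*}
G(x,1;z,\tau_{j+1})=\int_{M_{\tau_j^+}}G(x,1;w,\tau_j)\,G(w,\tau_j;z,\tau_{j+1})\,d_{\tau_j^-}w,
\end{equation*}
and split the integral over $B_j$ and over $C_j$. On $C_j$ we bound $G(x,1;w,\tau_j)\le a_j\le C\exp(-d^2/C)$ and use $\int_{M_{\tau_j^+}}G(w,\tau_j;z,\tau_{j+1})\,d_{\tau_j^-}w\le 1$ from $\eqref{e: crucial2}$, giving a contribution $\le C\exp(-d^2/C)$. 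On $B_j$ we instead bound the \emph{second} factor: $G(w,\tau_j;z,\tau_{j+1})$ is a \emph{standard} heat kernel on the flow $g_j$ over the interval $[\tau_{j+1},\tau_j]$ of length $\le \nu^{-j}$ by $\eqref{good}$, so the Gaussian bound $\eqref{lotus}$ applies:
\begin{equation*}
G(w,\tau_j;z,\tau_{j+1})\le\frac{C}{(\tau_j-\tau_{j+1})^{n/2}}\exp\!\left(-\frac{d_{\tau_{j+1}^+}^2(w,z)}{C(\tau_j-\tau_{j+1})}\right).
\end{equation*}
Here I need a lower bound on $d_{\tau_{j+1}^+}(w,z)$ for $w\in B_j$: by the triangle inequality $d_{\tau_{j+1}^+}(w,z)\ge d_{\tau_{j+1}^+}(x,z)-d_{\tau_{j+1}^+}(x,w)\ge r_{j+1}-d_{\tau_{j+1}^+}(x,w)$, and then I compare $d_{\tau_{j+1}^+}(x,w)$ with $d_{\tau_j^+}(x,w)\le r_j$ using the Shrinking Lemma (as flagged in Remark~\ref{l: rescaling}, this is exactly where the expanding condition $g_j(t_{j+1})\le g_{j+1}(t_{j+1})$ is used, since it lets us pass from $g_j$ to $g_{j+1}$-distances). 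The radius gap $r_{j+1}-r_j=4d(\sqrt[4]{\nu})^{-j}(1-(\sqrt[4]{\nu})^{-1})$ is designed to dominate the $\beta\sqrt{c_0\,\tau_{j+1}}$-type correction from the Shrinking Lemma, so that $d_{\tau_{j+1}^+}(w,z)\gtrsim d(\sqrt[4]{\nu})^{-j}$; dividing by $\tau_j-\tau_{j+1}\le\nu^{-j}$ inside the exponent gives $\exp(-c\,d^2(\sqrt{\nu})^{-j}\cdot\nu^{j})=\exp(-c\,d^2\cdot\nu^{j/2})$... wait — more carefully, $d_{\tau_{j+1}^+}^2(w,z)/(\tau_j-\tau_{j+1})\gtrsim d^2(\sqrt\nu)^{-j}/\nu^{-j}=d^2(\sqrt\nu)^{j}\ge d^2$, and the prefactor $(\tau_j-\tau_{j+1})^{-n/2}\le\nu^{jn/2}$ is controlled by using Lemma~\ref{calculation: inequity} (or elementary algebra) to absorb a polynomial-in-$\nu^j$ factor into the exponential at the cost of halving the constant. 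Summing the two pieces gives $a_{j+1}\le C\exp(-d^2/C)$ with a constant that does \emph{not} grow with $j$ — this uniformity is the whole content of the claim.

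The main obstacle, and the step I would spend the most care on, is the bookkeeping in the inductive step: making sure the constant $C$ in $a_j\le C\exp(-d^2/C)$ genuinely stays uniform in $j$. The two danger points are (i) the prefactor $(\tau_j-\tau_{j+1})^{-n/2}$, which blows up geometrically in $j$ and must be shown to be overwhelmed by the extra exponential decay coming from the fact that the spatial gap $r_{j+1}-r_j$ shrinks only geometrically while the time step shrinks geometrically at the \emph{same or faster} rate — this is why the factor $\sqrt[4]{\nu}$ (rather than $\nu$) appears in the definition of $r_j$, leaving a margin; and (ii) the accumulation of Shrinking-Lemma corrections across the steps — I must check that $\sum_j \beta\sqrt{c_0(\tau_j-\tau_{j+1})}$ is bounded (it is, by a geometric series, since $\tau_j-\tau_{j+1}\le\nu^{-j}$) so that distances at different expanding times differ by a bounded amount, hence the ball $B_j$ of radius $\sim 4d$ is, up to bounded error, the same set for all $j$. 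Once these two estimates are pinned down, choosing $d$ large (depending only on $n,A,\nu$) so that the geometric-series errors are, say, at most $d$, closes the induction; then the case of general $t,s$ (not of the form $t_{k+i},t_i$) follows by one more application of the standard Gaussian bound $\eqref{lotus}$ on the first and last partial time-steps together with the reproduction formula and Lemma~\ref{calculation: inequity}, exactly as in the reduction already set up before the statement of the claim.
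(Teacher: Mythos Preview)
Your strategy matches the paper's: split the reproduction integral over $B_j$ and $C_j$, bound the $C_j$-piece via $G(x,1;\cdot,\tau_j)\le a_j$ together with \eqref{e: crucial2}, bound the $B_j$-piece using the single-step Gaussian estimate \eqref{lotus} and the distance lower bound $d_{\tau_{j+1}^+}(w,z)\gtrsim d(\sqrt[4]{\nu})^{-j}$ coming from the Shrinking Lemma and the radius gap $r_{j+1}-r_j$, and handle $a_1$ directly from \eqref{lotus}. One point needs correction, though: the direct induction you set up does not close. Substituting the hypothesis $a_j\le C\exp(-d^2/C)$ into the $C_j$-contribution yields only $a_{j+1}\le C\exp(-d^2/C)+\text{err}_j$, and the constant drifts no matter how small $\text{err}_j>0$ is. The paper instead keeps the sharp bound $\mathcal{I}[C_j]\le a_j$ (no extra constant), obtaining the recursion $a_{j+1}\le a_j+C\exp\bigl(-d^2(\sqrt{\nu})^j/C\bigr)$; it then telescopes to $a_{j+1}\le a_1+\sum_{l=1}^{j}C\exp\bigl(-d^2(\sqrt{\nu})^l/C\bigr)$ and observes that this geometric-type sum is bounded by $C\exp(-d^2/C)$ uniformly in $j$. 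You have the right pieces; the assembly must be by summation rather than by naive induction with a fixed $C$.

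A secondary remark on the $B_j$-integral: you analyse the \emph{pointwise} Gaussian bound for $G(w,\tau_j;z,\tau_{j+1})$ and invoke Lemma~\ref{calculation: inequity} to absorb the prefactor $(\tau_j-\tau_{j+1})^{-n/2}$, but you never say how the $d_{\tau_j^-}w$-integration over $B_j$ is then carried out. The paper bounds the first factor by $C_0$ via \eqref{cake}, uses the inclusion $B_j\subset M_{\tau_{j+1}^+}\setminus B_{\tau_{j+1}^+}(z,\delta r_j)$, and integrates the Gaussian of the second factor using Lemma~\ref{calculation: partial integral}; that integral estimate already absorbs the prefactor, so your concern (i) dissolves once the correct lemma is invoked.
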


\begin{proof}[Proof of Claim \ref{yes}]
For each $j$, the expanding condition $g_{j-1}(t_{j})\le g_{j}(t_j)$ implies $B_j=B_{\tau_j^+}(x,r_j)\subset B_{\tau_j^-}(x,r_j)$. Applying the Shrinking Lemma on $[\tau_{j+1},\tau_j]$, we find that $B_{\tau_j^-}(x,r_j)\subset B_{\tau_{j+1}^+}(x,r_j+\beta\sqrt{A}\sqrt{\tau_j-\tau_{j+1}})$. Thus for any $z\in C_{j+1}$ and $w\in B_j$, the triangle inequality implies 
\begin{equation}\label{dog}
d_{\tau_{j+1}^+}(z,w)\ge r_{j+1}-r_{j}-\beta \sqrt{A}\sqrt{\tau_j-\tau_{j+1}}.
\end{equation}
By $\eqref{good}$, $\sqrt{\tau_j-\tau_{j+1}}\le(\sqrt{\nu})^{-j}\le(\sqrt[4]{\nu})^{-j}$, we choose 
$$d\ge \frac{\beta\sqrt{A}}{2(1-(\sqrt[4]{\nu})^{-1})},$$ then $\eqref{dog}$ gives 
\begin{equation}\label{Deltar_j}
d_{\tau_{j+1}^+}(z,w)\ge \delta r_j:=\frac{2d(1-(\sqrt[4]{\nu})^{-1})}{(\sqrt[4]{\nu})^{j}}.
\end{equation}
To conclude, we have
\begin{equation}\label{pear}
B_j\subset M_{\tau_{j+1}^+}-B_{\tau_{j+1}^+}(z,\delta r_j).   
\end{equation}

By Definition $\ref{gene}$, we have
\begin{equation}
G(x,1;z,\tau_{j+1})=\int_{M_{\tau_j^+}} G(x,1;w,\tau_{j})G(w,\tau_j;z,\tau_{j+1})\, d_{\tau_j^-} w,
\end{equation}
for any $z\in C_{j+1}$ fixed. We split the following integral $\mathcal{I}[M_{\tau_j^+}]:=G(x,1;z,\tau_{j+1})$ into the integrals over $C_j$ and $B_j$. We obtain from the definition of $a_j$ and 
$\eqref{e: crucial2}$ that

\begin{equation}\label{C}
\begin{split}
\mathcal{I}[C_j]&=\int_{C_j} G(x,1;w,\tau_j)G(w,\tau_j;z,\tau_{j+1}) d_{\tau_j^-} w\\
&\le a_j\int_{M_{\tau_j^+}} G(w,\tau_j;z,\tau_{j+1})d_{\tau_j^-}w,\le a_j.
\end{split}
\end{equation}
To estimate $\mathcal{I}[B_j]$, we notice that by $\eqref{pear}$, $\eqref{cake}$ and $\eqref{trader}$ we have
\begin{equation*}
\begin{split}
\mathcal{I}[B_j]&\le C_0\int_{B_j} G(w,\tau_{j};z,\tau_{j+1})\,\,d_{\tau_{j}^-}w,\\
&\le C_0\int_{ M_{\tau_{j+1}^+}-B_{\tau_{j+1}^+}(z,\delta r_j) } G(w,\tau_{j};z,\tau_{j+1})\,\,d_{\tau_{j}^-}w\\
&\le C\int_{ M_{\tau_{j+1}^+}-B_{\tau_{j+1}^+}(z,\delta r_j) } G(w,\tau_{j};z,\tau_{j+1})\,\,d_{\tau_{j+1}^+}w.
\end{split}
\end{equation*}
Then applying the Gaussian bound $\eqref{lotus}$ to $G(w,\tau_{j};z,\tau_{j+1})$ and calculating as Lemma $\ref{calculation: partial integral}$ we have
\begin{equation}\label{B}
\begin{split}
 \mathcal{I}[B_j]&\le C exp\left(-\frac{(\delta r_j)^2}{C(\tau_j-\tau_{j+1})}\right).
\end{split}
\end{equation}
Plugging $\eqref{good}$ and $\eqref{Deltar_j}$ into $\eqref{B}$ we have
\begin{equation}
\mathcal{I}[B_j]\le C exp\left(-\frac{(\sqrt{\nu})^j d^2}{C}\right).
\end{equation}
Combining $\eqref{C}$ and $\eqref{B}$, we see that $G(x,1;z,\tau_{j+1})\le a_j+C exp(-\frac{d^2(\sqrt{\nu}^j)}{C})$ for arbitrary $z$ in $C_{j+1}$. Hence by the definition of $a_{j+1}$, there holds 
\begin{equation*}
\begin{split}
a_{j+1}&\le a_j+C exp\left(-\frac{d^2(\sqrt{\nu})^j}{C}\right)\le a_1+C \sum\displaylimits_{l=1}^j exp\left(-\frac{d^2(\sqrt{\nu})^l}{C}\right)\\
&\le a_1+C exp\left(-\frac{d^2}{C}\right)\sum\displaylimits_{l=1}^j exp\left(-\frac{d^2((\sqrt{\nu})^l-1)}{C}\right)\\
&\le a_1 + C\,exp\left(-\frac{d^2}{C}\right).
\end{split}
\end{equation*}

Note $a_1=\sup_{C_1} G(x,1;\cdot,\tau_1)$. For any $z\in C_1=M_{\tau_1^+}-B_{\tau_1^+}(x,r_1)$, we have $d_{\tau_1^+}(x,z)\ge r_1=4d(1-(\sqrt[4]{\nu})^{-1})$. Substituting this into the ordinary Gaussian bound, we get
$G(x,1;z,\tau_1)\le\frac{C}{(1-\tau_1)^{\frac{n}{2}}}exp(-\frac{d^2}{C(1-\tau_1)})$.
This gives $a_1\le C exp(-\frac{d^2}{C})$.
Hence $a_{j+1}\le C exp(-\frac{d^2}{C})$. This finishes the proof of the claim.
\end{proof}

To summarize, we showed for $t=t_{k+i},s=t_i$ and $x,y$ such that $G(x,t_{k+i};y,t_i)$ is defined, we have the Gaussian bound. 
\begin{equation}\label{e: division times}
G(x,t_{k+i};y,t_i)\le \frac{C}{(t_{k+i}-t_i)^{\frac{n}{2}}}exp\left(-\frac{d^2_{t_i^+}(x,y)}{C(t_{k+i}-t_i)}\right).
\end{equation}
We will use this to derive the Gaussian bound $\eqref{e: heat kernel}$ for arbitrary $t$ and $s$.

\textbf{Part 2}
To show $\eqref{e: heat kernel}$ for arbitrary $t$ and $s$, there are two cases left. The first is that neither $t$ nor $s$ is expanding time, and the second is that one of them is an expanding time. Since the second case follows a same but easier route than the first one, we prove the first case below. 

Since $t$ and $s$ are not expanding times, we may assume $t\in(t_{k+i},t_{k+i+1})$ and $s\in(t_{i},t_{i+1})$ for some $k$ and $i$. Rescaling the flow on $[s,t]$ to a new flow on $[0,1]$, for the same reason as in Part 1, it suffices to show for any very large $d$ (which we specify below) and $x,y$ such that $d_0(x,y)\ge 5d$, we have
\begin{equation}\label{sleep}
G(x,1;y,0)\le C exp\left(-\frac{d^2}{C}\right).
\end{equation}
Under rescaling, $t_{k+i}$ and $t_{i+1}$ become $\tau_2:=\frac{t_{k+i}-s}{t-s}$ and $\tau_1:=\frac{t_{i+1}-s}{t-s}$, respectively. By Definition $\ref{gene}$ of the generalized heat kernel, we have
\begin{equation}
G(x,1;y,0)=\int_{M_{\tau_2^+}}\int_{M_{\tau_1^+}}\,G(x,1;z,\tau_{2})G(z,\tau_{2};w,\tau_{1})G(w,\tau_{1};y,0)d_{\tau_1^-}w\, d_{\tau_2^-}z.
\end{equation}
We split the integral $\mathcal{I}[M_{\tau_2^+}\times M_{\tau_1^+}]:=G(x,1;y,0)$ over three regions 
\begin{equation}
\begin{split}
U&=\{(z,w)\,|\,z\in B_{\tau_2^-}(x,d) \;and\; w\in B_{\tau_1^-}(y,d)\},\\
V&=\{(z,w)\,|\,z\notin B_{\tau_2^-}(x,d)\},\\
W&=\{(z,w)\,|\,w\notin B_{\tau_1^-}(y,d)\}.
\end{split}
\end{equation}
Then $G(x,1;y,0)\le \mathcal{I}[U]+\mathcal{I}[V]+\mathcal{I}[W]$. Since $\tau_1$ and $\tau_2$ are both expanding times and $\tau_2-\tau_1$ is bounded below by a positive number depending only on $\nu$, the result from Part 1 implies
\begin{equation}\label{tau_1tau_2}
 G(z,\tau_2;w,\tau_1)\le C\,exp\left(-\frac{d_{\tau_1^+}^2(z,w)}{C}\right).
\end{equation}
If we choose $d\ge\beta\sqrt{A}$, then for any $z\in B_{\tau_2^-}(x,d)$ and $w\in B_{\tau_1^-}(y,d)$, the Shrinking Lemma together with the expanding conditions imply $d_{\tau_1^-}(x,z)\le d_{\tau_1^+}(x,z)\le d_{\tau_2^-}(x,z)+\beta\sqrt{A}\le d_{\tau_2^-}(x,z)+d\le 2d$, and $d_{\tau_1^-}(x,y)\ge d_0(x,y)-\beta\sqrt{A}\ge 4d$. Then by triangle inequality we have 
\begin{equation}\label{dd}
\begin{split}
 d_{\tau_1^+}(z,w)&\ge d_{\tau_1^-}(z,w)
 \ge d_{\tau_1^-}(x,y)-d_{\tau_1^-}(x,z)-d_{\tau_1^-}(y,w)
 \ge d.
 \end{split}
 \end{equation}
Hence by $\eqref{tau_1tau_2}$, $\eqref{dd}$, $\eqref{lotus}$ and $\eqref{e: crucial2}$ we have
\begin{equation}\label{e: I}
\begin{split}
\mathcal{I}[U]&\le C\,exp\left(-\frac{d^2}{C}\right)\cdot\left(\int_{M_{\tau_2^+}} G(x,1;z,\tau_2)\,d_{\tau_2^-}z\right)\cdot\left(\int_{M_{\tau_1^+}} G(w,\tau_1;y,0)\,d_{\tau_1^-}w\right)\\
&\le C\,exp\left(-\frac{d^2}{C}\right)\cdot C\cdot 1=C\,exp\left(-\frac{d^2}{C}\right).
\end{split}
\end{equation}
And $\eqref{tau_1tau_2}$, $\eqref{e: crucial2}$, $\eqref{lotus}$ together with Lemma $\ref{calculation: partial integral}$ imply
\begin{equation}\label{e: J}
\begin{split}
\mathcal{I}[V]
&\le C \left(\int_{z\notin B_{\tau_2^-}(x,d)} G(x,1;z,\tau_2)\,\,d_{\tau_2^-}z\right)\le C exp\left(-\frac{d^2}{C}\right).
\end{split}
\end{equation}
Similarly we have
\begin{equation}\label{e: K}
\mathcal{I}[W]\le C exp\left(-\frac{d^2}{C}\right).
\end{equation}
So $\eqref{sleep}$ follows from $\eqref{e: I},\eqref{e: J},\eqref{e: K}$ immediately.
\end{proof}
\end{subsection}

\begin{subsection}{Gradient of heat kernel}
In this subsection, we use Proposition $\ref{generalized heat kernel}$ to derive an upper bound for the gradient of the generalized heat kernel. Assume all the conditions are the same as in Proposition $\ref{generalized heat kernel}$. We choose and fix some $x\in M_{i}$ , $t\in(t_{i},t_{i+1}]$ for some $i$. Then $G(x,t;\cdot,\cdot)$ is a solution to the heat equation $\frac{\partial}{\partial s'} G(x,t;z,s')+\Delta_{z,s'}G(x,t;z,s')=0$ on $M_j\times(t_j,\min(t_{j+1},t)]$, $j\le i$. 
For an arbitrary $(y,s)\in M_{j}\times (t_j,\min(t_{j+1},t)]$, $j\le i$, applying the standard result of Schauder estimate (see \cite{trudinger} for example), we see that there is a constant $C$ depending on $A$ and $n$ such that
\begin{equation}
|\nabla G|(x,t;y,s)\le\frac{C}{\sqrt{s-t_j}}\,\sup\,G(x,t;\cdot,\cdot),
\end{equation}
where the supremum is taken over $B_{g(s)}(y,\sqrt{s-t_j})\times[t_j,s]$.

Since $|\Rm|\le\frac{A}{t}$ on $M_j\times [t_j,t_{j+1}]$,we have a constant $C_1=C_1(n,A,\nu)>0$ such that for any $s,s'\in[t_j,t_{j+1}]$, $C_1^{-1}d_{s'}\le d_s\le C_1\,d_{s'}$. Suppose $d_{s}(x,y)\ge d$ for a large number $d$ satisfying
\begin{equation}\label{35}
 d\ge 2C_1(\sqrt{t_{i+1}-t_{i}}+\beta\sqrt{A}\sqrt{t_{i+1}-t_i}).
\end{equation}
We claim the following Gaussian bound of $|\nabla G|(x,t;y,s)$:

\begin{claim}\label{claim: gradient of G}
\begin{equation}\label{lily}
|\nabla G|(x,t;y,s)\le\frac{1}{\sqrt{s-t_j}}\frac{C}{t_{i+1}^{\frac{n}{2}}}exp\left(-\frac{d_{s}^2(x,y)}{Ct_{i+1}}\right)
\end{equation}
for some constant $C$ that only depends on $A$, $\nu$ and $n$.
\end{claim}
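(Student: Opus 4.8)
The plan is to deduce the claim from the Schauder gradient estimate recalled just above, together with the Gaussian bound of Proposition~\ref{generalized heat kernel}; no new analytic ingredient is needed, and the work is entirely in the distance bookkeeping. For $(y,s)\in M_j\times(t_j,\min(t_{j+1},t)]$ with $j\le i$, the function $G(x,t;\cdot,\cdot)$ solves $\partial_\sigma G+\Delta_{z,\sigma}G=0$ on $M_j\times(t_j,\min(t_{j+1},t)]$, a single piece on which \eqref{trader} controls the geometry, so the displayed Schauder estimate gives $|\nabla G|(x,t;y,s)\le C(s-t_j)^{-1/2}\sup_Q G(x,t;\cdot,\cdot)$ with $Q:=B_{g(s)}(y,\sqrt{s-t_j})\times[t_j,s]$. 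It therefore suffices to bound $G(x,t;z,\sigma)$ for every $(z,\sigma)\in Q$ by $C\,t_{i+1}^{-n/2}\exp(-d_s^2(x,y)/(Ct_{i+1}))$; and since we work in the regime $d_s(x,y)\ge d$ with $d$ as in \eqref{35}, and replacing $d$ by $d_s(x,y)$ keeps \eqref{35} valid, I may assume $d=d_s(x,y)$.

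For a fixed $(z,\sigma)\in Q$, Proposition~\ref{generalized heat kernel} gives $G(x,t;z,\sigma)\le C(t-\sigma)^{-n/2}\exp(-d_{\sigma^+}^2(x,z)/(C(t-\sigma)))$, and the remaining work splits into a spatial and a temporal comparison. \emph{Spatial:} the triangle inequality for $g(s)$ gives $d_s(x,z)\ge d_s(x,y)-\sqrt{s-t_j}\ge d-\sqrt{t_{i+1}-t_i}\ge\tfrac12 d$, using $s-t_j\le t_{j+1}-t_j\le t_{i+1}-t_i$ together with \eqref{35}; then, since $\sigma$ and $s$ both lie in $[t_j,t_{j+1}]$ — the single piece on which $|\Rm|\le A/t$, hence on which the metric equivalence $C_1^{-1}d_\sigma\le d_s\le C_1 d_\sigma$ recalled above holds — one gets $d_{\sigma^+}(x,z)\ge(2C_1)^{-1}d$. \emph{Temporal:} one has $0<t-\sigma\le t-t_j\le t_{i+1}$; when $t_{i+1}\le d^2/(4C_1^2)$, Lemma~\ref{calculation: inequity} converts the resulting Gaussian into $C\,t_{i+1}^{-n/2}\exp(-d^2/(Ct_{i+1}))$, while in the complementary range \eqref{35} forces $d$ and $\sqrt{t_{i+1}}$ to be comparable, so the exponential is pinched between two constants and one merely bounds $u\mapsto u^{-n/2}e^{-a/u}$ by its maximum $\sim a^{-n/2}\sim t_{i+1}^{-n/2}$. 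Either way $\sup_Q G\le C\,t_{i+1}^{-n/2}\exp(-d_s^2(x,y)/(Ct_{i+1}))$, and feeding this into the Schauder estimate yields \eqref{lily}.

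The argument is otherwise routine; the two points that need care are that every distance comparison must be carried out within one piece $M_j$, so that \eqref{trader}, the Shrinking Lemma and the metric equivalence all apply, and that \eqref{35} is strong enough both to absorb the Schauder cylinder radius $\sqrt{s-t_j}\le\sqrt{t_{i+1}-t_i}$ and to feed Lemma~\ref{calculation: inequity}. Since Proposition~\ref{generalized heat kernel} does the real work, the closest thing to an obstacle is the preliminary verification — already implicit in the text preceding the claim — that the Schauder estimate applies, i.e.\ that a slightly enlarged parabolic cylinder around $(y,s)$ sits inside a region where $G(x,t;\cdot,\cdot)$ is a smooth solution with controlled coefficients and injectivity radius; this follows from completeness of $(M_j,g_j(s))$ together with \eqref{trader}.
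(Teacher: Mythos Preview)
Your proof is correct and follows essentially the same approach as the paper: both combine the Schauder gradient estimate with the Gaussian bound of Proposition~\ref{generalized heat kernel} and then do distance bookkeeping via the metric equivalence on $[t_j,t_{j+1}]$ to reduce to Lemma~\ref{calculation: inequity}. Your bookkeeping differs only in minor ways---you apply the triangle inequality at time $s$ and then convert to time $\sigma$, whereas the paper converts first and applies the triangle inequality at time $s'$ (invoking the Shrinking Lemma to control $d_{s'}(y,z)$); and you handle the hypothesis $T\le d^2$ of Lemma~\ref{calculation: inequity} by an explicit case split, which the paper leaves implicit in the phrase ``a large number $d$''.
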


\begin{proof}[Proof of Claim \ref{claim: gradient of G}]
For any $(z,s')\in B_{g(s)}(y,\sqrt{s-t_{j}})\times[t_{j},s]$, first we have by the Shrinking Lemma that $d_{s'}(y,z)\le d_s(y,z)+\beta\sqrt{A}(\sqrt{s-s'})$. Then the triangle inequality and $\eqref{35}$ we get
\begin{equation}
\begin{split}
d_{s'}(x,z)&\ge d_{s'}(x,y)-d_{s'}(y,z)\\
&\ge d_{s'}(x,y)-d_s(y,z)-\beta\sqrt{A}(\sqrt{s-s'})\\
&\ge d_{s'}(x,y)-\sqrt{t_{j+1}-t_{j}}-\beta\sqrt{A}\sqrt{t_{j+1}-t_j}\\
&\ge C_1^{-1}d_s(x,y)-\sqrt{t_{j+1}-t_{j}}-\beta\sqrt{A}\sqrt{t_{j+1}-t_j}\\
&\ge \tfrac{1}{2}C_1^{-1}d_s(x,y).
\end{split}
\end{equation}
So by Proposition $\ref{generalized heat kernel}$ we have 
\begin{equation}\label{lemon}
\begin{split}
G(x,t;z,s')&\le\frac{C}{(t-s')^{\frac{n}{2}}}exp\left(-\frac{d_{s'}^2(x,z)}{C(t-s')}\right)\le\frac{C}{(t-s')^{\frac{n}{2}}}exp\left(-\frac{d_{s}^2(x,y)}{C(t-s')}\right).
\end{split}
\end{equation}
Since $d_s(x,y)\ge d$ and $t-s'\le t_{i+1}$, Lemma $\ref{calculation: inequity}$ implies
\begin{equation}
G(x,t;z,s')\le \frac{C}{(t_{i+1})^{\frac{n}{2}}}exp\left(-\frac{d_{s}^2(x,y)}{Ct_{i+1}}\right). 
\end{equation}
The claim thus follows by letting $(z,s')$ run over $B_{g(s)}(y,\sqrt{s-t_j})\times[t_j,s]$.
\end{proof}

\end{subsection}

\end{section}


\begin{section}{Proof of Theorem $\ref{t: theorem1}$}

First, we consider the conditions given in Theorem $\ref{t: theorem1}$. 
The upper bound on $\ell(x,0)$ implies a lower bound on Ricci curvature, that is, $\ell(x,0)\le\alpha_0\le 1$ implies $\textnormal{Ric}\ge -K(n)$. So by Bishop-Gromov comparison, reducing $v_0$ to a smaller positive number depending only on the original $v_0$ and $n$, we may assume without loss of generality that 
\begin{equation}\label{e: volume}
Vol_{g(0)}B_{g(0)}(x,r)\ge v_0 r^n
\end{equation}
for all $x\in B_{g(0)}(x_0,s_0-1)$ and $r\in(0,1]$.
We can also assume $\alpha_0$ without loss of generality that
\begin{equation}\label{e: assump on initial u}
\alpha_0\le\frac{1}{2C_4}<1
\end{equation}
where $C_4=C_4(v_0,n)>2$ is to be determined later. Otherwise, we get the result by applying the above result to a rescaled metric and then scale it back.

By the relative compactness of $B_{g(0)}(x_0,s_0)$, there exists some $\rho\in(0,\frac{1}{2}]$ such that $|\textnormal{Rm}|\le\frac{1}{\rho^2}$, $B_{g(0)}(x,\rho)\subset\subset M$ and $\textnormal{inj}_{g(0)}(x)\ge\rho$ for all $x\in B_{g(0)}(x_0,s_0)$. The constant $\rho$ may depend on $(M,g(0))$, $x_0$ and $s_0$. By applying Lemma $\ref{l: conformal}$, with $U:=B_{g(0)}(x_0,s_0)$, we can find a connected subset $\tilde{M}\subset U\subset M$ containing $B_{g(0)}(x_0,s_0-\frac{1}{2})$, and a smooth, complete metric $\tilde{g}(0)$ on $\tilde{M}$ with $\sup\displaylimits_{\tilde{M}}|\textnormal{Rm}|_{\tilde{g}(0)}<\infty$ such that on $B_{g(0)}(x_0,r_0)$, where $r_0:=s_0-1>3$, the metric remains unchanged. Taking Shi's Ricci flow we get a smooth, complete, bounded-curvature Ricci flow $g_0(t)$ on $M_0:=\tilde{M}$, existing for some nontrivial time interval $[0,t_1]$. In view of the boundedness of the curvature, after possibly reducing $t_1$ to a smaller positive value, we may trivially assume that $|\textnormal{Rm}|_{g(t)}\le\frac{C_3}{t}$ for all $t\in (0,t_1]$ and $\ell(x,t)\le 2\alpha_0 <1$ for all $x\in B_{g(0)}(x_0,r_0)$ and  $t\in[0,t_1]$. The constant $C_3=C_3(v_0,n)$ will be given below.

Of course, our flow still lacks a uniform control on its existence time. Below we will carry out an inductive argument to show that $t_1$ could be extended up to a uniform time $t_k$, while the repeating time $k$ may be allowed to depend on $(M,g)$. 

Now we begin the proof of Theorem $\ref{t: theorem1}$. First, suppose we have constructed a Ricci flow in expansion $(\{M_j\}_{j=1}^i,\{g_j(t)\}_{j=1}^{i},\nu)$ with $(M_0,g_0(t))_{t\in[0,t_1]}$ as above. Suppose further the Ricci flow in expansion satisfies the following a priori assumptions:
\begin{description}
\item[(APA 1)]Restricting it on $B_{g(0)}(x_0,r_i)$, we get a smooth Ricci flow $g(t)$ up to $t_{i+1}$;
\item[(APA 2)]For each complete Rici flow $(M_j,g_j(t))$, we have $|\Rm|_{g_j(t)}\le\frac{C_3}{t}$;
\item[(APA 3)]$\ell(x,t)\le C_4\alpha_0<1$ for all $t\in[0,t_{i+1}]$ and $x\in B_{g(0)}(x_0,r_i)$.
\end{description}
where the constants $C_3, C_4, \nu$ depending on $v_0, n$ will be specified in the course of the proof. 

Our goal is to extend it to a new Ricci flow in expansion $(\{M_j\}_{j=1}^{i+1},\{g_j(t)\}_{j=1}^{i+1},\nu)$ by adding a complete Ricci flow $(M_{i+1},g_{i+1}(t))$ piece existing for $[t_{i+1},t_{i+2}]$, and show that it still satisfies (APA 1)-(APA 3). In the current section, we construct $(M_{i+1}, g_{i+1}(t))$, and then verify (APA 1) and (APA 2), and we leave the verification of (APA 3) to the next section.

Let $C_1\ge 1$ and $\tilde{T}>0$ be the constants from the Curvature Decay Lemma (Lemma $\ref{l: curvature decay}$) when $K=1$ and $v_0=v_0$. With this choice of $C_1$, we set $C_2=\gamma C_1$ and $C_3=4C_2=4\gamma C_1>1$, where $\gamma=\gamma(n)\ge 1$ is the constant from the Conformal Change Lemma (Lemma $\ref{l: conformal}$), and set $\nu=1+\frac{1}{4C_3}$. Choose $\tau$ such that
\begin{equation}\label{e: constants conditions}
\tau\le\hat{T}, \quad\beta^2C_3\tau\le\frac{1}{16}\sqrt{\tau}\le 1, \quad\tau\le 1,\quad \tau\le\frac{C_1}{4} ,
\end{equation}
where $\beta\ge 1$ is the constant from the Shrinking Lemma. We can also assume that $2t_{i+1}\le\tau$, because otherwise we get the desired uniform existence time $\frac{\tau}{2}$.

In the Claim below, we show that in fact we have a stronger curvature decay bound $|\Rm|_{g(t)}\le\frac{C_1}{t}$. However, the original curvature decay will nevertheless be used to control the distance distortion. 

\begin{claim}\label{shrink}
For all $x\in U:=B_{g(0)}(x_0,r_i-2\sqrt{\frac{t_{i+1}}{\tau}})$, we have $B_{g(t)}(x,\sqrt{\frac{t}{\tau}})\subset\subset B_{g(0)}(x_0,r_i)$, $\textnormal{inj}_{g(t)}(x)\ge\sqrt{\frac{t}{C_1}}$ and $|\textnormal{Rm}|_{g(t)}(x)\le\frac{C_1}{t}$, for all $t\in(0,t_{i+1}]$.
\end{claim}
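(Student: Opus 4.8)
The plan is to apply the Curvature Decay Lemma (Lemma~\ref{l: curvature decay}) to the flow $g(t)$ restricted to a suitable small ball centered at each $x\in U$, after a parabolic rescaling that normalizes the time interval $[0,t_{i+1}]$ to $[0,1]$ (or more precisely to $[0,t_{i+1}/\tau]$). The a priori assumption (APA~1) gives us a smooth Ricci flow on $B_{g(0)}(x_0,r_i)$ up to $t_{i+1}$; (APA~2) gives the rough bound $|\Rm|_{g(t)}\le C_3/t$, which is exactly what the Shrinking Lemma needs to control how the metric balls $B_{g(t)}(x,\cdot)$ shrink inside $B_{g(0)}(x_0,r_i)$; and (APA~3) gives $\ell(x,t)\le C_4\alpha_0<1$, in particular $\ell\le K=1$, which is the hypothesis on $\ell$ demanded by Lemma~\ref{l: curvature decay}. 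The volume lower bound $\eqref{e: volume}$ supplies the $v_0$ in that lemma.

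\textbf{Key steps.} First I would fix $x\in U=B_{g(0)}(x_0,r_i-2\sqrt{t_{i+1}/\tau})$ and set up the rescaled flow $\hat g(t)=\tau^{-1}g(\tau t)$ for $t\in[0,t_{i+1}/\tau]$; note $t_{i+1}/\tau\le 1/2$ since $2t_{i+1}\le\tau$, and under rescaling (APA~2) becomes $|\Rm|_{\hat g(t)}\le C_3/t$. Second, using the Shrinking Lemma with $c_0=C_3$ and the constraint $\beta^2C_3\tau\le\frac1{16}\sqrt\tau\le1$ from $\eqref{e: constants conditions}$, verify that $B_{\hat g(t)}(x,1)\subset\subset B_{\hat g(0)}(x_0,r_i/\sqrt\tau)$ for all relevant $t$ — this is where the radius margin $2\sqrt{t_{i+1}/\tau}$ in the definition of $U$ gets consumed, since the ball $B_{g(t)}(x,\sqrt{t/\tau})$ can sit inside $B_{g(0)}(x,\sqrt{t/\tau}+\beta\sqrt{C_3 t})$ and one checks $\sqrt{t/\tau}+\beta\sqrt{C_3t}\le 2\sqrt{t_{i+1}/\tau}$ using $\eqref{e: constants conditions}$. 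Third, apply Lemma~\ref{l: curvature decay} to $\hat g$ on $B_{\hat g(t)}(x,1)$ with $K=1$, $v_0=v_0$, and $\gamma$ chosen appropriately (say $\gamma=\sqrt{t_{i+1}/\tau}$ or any fixed fraction covering the rescaled radius $\sqrt{t/\tau}\cdot\tau^{-1/2}$... — more carefully, we want the conclusion on a ball that, unrescaled, contains $B_{g(t)}(x,\sqrt{t/\tau})$). The lemma yields $|\Rm|_{\hat g(t)}<C_1/t$ on that ball for $t<\min(t_{i+1}/\tau,\tilde T)$, plus $\inj_{\hat g(t)}(x)\ge\sqrt{t/C_1}$; scaling back by $\tau$ gives $|\Rm|_{g(t)}(x)\le C_1/t$ and $\inj_{g(t)}(x)\ge\sqrt{t/C_1}$ for $t\in(0,t_{i+1}]$, using $\tau\le\tilde T=\hat T$ from $\eqref{e: constants conditions}$ to ensure $t_{i+1}/\tau$ stays within the time range where the lemma's conclusion holds. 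Finally, since $x\in U$ was arbitrary, the claim follows.

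\textbf{Main obstacle.} The delicate point is the bookkeeping of radii and rescalings: one must check that the concentric balls appearing in the Shrinking Lemma, in the hypothesis of Lemma~\ref{l: curvature decay} (which needs $B_{\hat g(t)}(x,1)\subset\subset M$ for all $t$ and the $\ell$-bound on $\bigcup_s B_{\hat g(s)}(x,1)$), and in its conclusion (which gives estimates only on $B_{\hat g(t)}(x,\gamma)$ for $\gamma<1$) all fit together so that the conclusion transfers back to exactly the ball $B_{g(t)}(x,\sqrt{t/\tau})$ claimed. The constraint $\sqrt{t/\tau}$ grows with $t$, so at $t=t_{i+1}$ one is working with a ball of radius $\sqrt{t_{i+1}/\tau}\le 1/\sqrt2$, comfortably less than $1$, which is why the rescaled-radius-$1$ ball in the lemma suffices; but the reverse inclusion (that $B_{\hat g(t)}(x,1)$ is relatively compact in, and the $\ell$-bound holds on, a region already controlled by (APA~1)--(APA~3) over $B_{g(0)}(x_0,r_i)$) again relies on the Shrinking Lemma estimate and the margin built into $U$. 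Once these inclusions are laid out, the rest is a direct citation of Lemma~\ref{l: curvature decay} and elementary rescaling.
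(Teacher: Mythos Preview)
Your overall strategy---rescale, use the Shrinking Lemma to nest balls, then invoke the Curvature Decay Lemma (Lemma~\ref{l: curvature decay})---is exactly what the paper does. The gap is in the rescaling factor. You set $\hat g(t)=\tau^{-1}g(\tau t)$, a \emph{fixed} parabolic rescaling independent of the step $i$. Under this rescaling the unit ball $B_{\hat g(t)}(x,1)$ required by Lemma~\ref{l: curvature decay} corresponds to $B_{g(\cdot)}(x,\sqrt{\tau})$ in the original metric. But the only margin you have around $x\in U$ is $2\sqrt{t_{i+1}/\tau}$, and when $t_{i+1}$ is very small (as it certainly is at the start of the iteration, where $t_1$ depends on the uncontrolled curvature of the specific manifold) one has $\sqrt{\tau}\gg 2\sqrt{t_{i+1}/\tau}$. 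So you cannot verify $B_{\hat g(t)}(x,1)\subset\subset B_{g(0)}(x_0,r_i)$ for all $x\in U$; your argument would only cover $x$ at distance $\gtrsim\sqrt{\tau}$ from $\partial B_{g(0)}(x_0,r_i)$, forcing a radius loss of order $\sqrt{\tau}$ at \emph{every} step of the induction rather than the summable $\sqrt{t_{i+1}/\tau}$. The same issue shows up when you suggest taking $\gamma=\sqrt{t_{i+1}/\tau}$ in Lemma~\ref{l: curvature decay}: since $C_1$ and $\tilde T$ depend on $\gamma$, a $\gamma$ that varies with $i$ (and tends to $0$) destroys the uniformity of $C_1$.

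The paper instead rescales by the step-dependent factor $\tau/t_{i+1}$, i.e.\ $\hat g(t)=\tfrac{\tau}{t_{i+1}}\,g\!\bigl(\tfrac{t_{i+1}}{\tau}t\bigr)$ on $[0,\tau]$. This is calibrated so that the margin $2\sqrt{t_{i+1}/\tau}$ becomes exactly $2$: by the triangle inequality $B_{\hat g(0)}(x,2)\subset B_{g(0)}(x_0,r_i)$, the Shrinking Lemma then gives $B_{\hat g(t)}(x,1)\subset B_{\hat g(0)}(x,2)$ (using $\beta^2C_3\tau\le1$ from \eqref{e: constants conditions}), and the scale-covariant volume bound \eqref{e: volume} gives $Vol_{\hat g(0)}B_{\hat g(0)}(x,1)\ge v_0$. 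Now Lemma~\ref{l: curvature decay} applies with a \emph{fixed} $\gamma$ and yields $|\Rm|_{\hat g(t)}(x)\le C_1/t$ and $\mathrm{inj}_{\hat g(t)}(x)\ge\sqrt{t/C_1}$ for $t\in(0,\tau]$; scaling back gives the claim. (Minor point: $\tilde T$ from Lemma~\ref{l: curvature decay} and $\hat T$ from Lemma~\ref{l: volume} are different constants; both are accommodated by choosing $\tau$ small.)
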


\begin{proof}[Proof of Claim \ref{shrink}]
By the Shrinking Lemma, for any $x\in B_{g(0)}(x_0, r_i-2\sqrt{\frac{t_{i+1}}{\tau}})$, the triangle inequality implies that $B_{g(0)}(x,2\sqrt{\frac{t_{i+1}}{\tau}})\subset\subset B_{g(0)}(x_0,r_i)$ and hence by assumption (APA 3), $\ell(y,t)\le 1$ on $B_{g(0)}(x,2\sqrt{\frac{t_{i+1}}{\tau}})$ for all $t\in[0,t_{i+1}]$. Scaling the solution to $\hat{g}(t):=\frac{\tau}{t_{i+1}}g(t\frac{t_{i+1}}{\tau})$ we see that we have a solution $\hat{g}(t)$ on $B_{g(0)}(x_0,r_{i})\supset\supset B_{\hat{g}(0)}(x,2)$, $t\in[0,\tau]$ with $|\textnormal{Rm}|_{\hat{g}(t)}\le\frac{C_3}{t}$ and $\ell(\cdot,\cdot)\le 1$ on $B_{\hat{g}(0)}(x,2)\times(0,\tau]$.

On the one hand, applying the Shrinking Lemma to $\hat{g}(t)$, we find that $B_{\hat{g}(t)}(x,2-\beta\sqrt{C_3 t})\subset B_{\hat{g}(0)}(x,2)$ for all $t\in[0,\tau]$, and in particular $B_{\hat{g}(t)}(x,1)\subset B_{\hat{g}(0)}(x,2)$ because $\tau\le \frac{1}{\beta^2 C_3}$. Thus we have $\ell(\cdot,\cdot)\le 1$ on $\bigcup_{s\in[0,\tau]}B_{\hat{g}(s)}(x,1)\times[0,\tau]$. On the other hand, the volume inequality $\eqref{e: volume}$  transforms to $Vol_{\hat{g}(0)}B_{\hat{g}(0)}(x,1)\ge v_0$.

Applying the Curvature Decay Lemma (Lemma $\ref{l: curvature decay}$) to $\hat{g}(t)$, we have $\textnormal{inj}_{\hat{g}(t)}(x)\ge\sqrt{\frac{t}{C_1}}$ and $|\textnormal{Rm}|_{\hat{g}(t)}(x)\le\frac{C_1}{t}$ for all $0<t\le\tau$. Scaling back, we see that $B_{g(t)}(x,\sqrt{\frac{t_{i+1}}{\tau}})\subset\subset B_{g(0)}(x_0,r_{i})$, $\textnormal{inj}_{g(t)}(x)\ge\sqrt{\frac{t}{C_1}}$ and $|\textnormal{Rm}|_{g(t)}(x)\le\frac{C_1}{t}$ for $t\in(0,t_{i+1}]$.

\end{proof}

Specializing the claim $\ref{shrink}$ to $t=t_{i+1}$, we have 
$|\textnormal{Rm}|_{g(t_{i+1})}(x)\le\frac{C_1}{t_{i+1}}$ and $\textnormal{inj}_{g(t_{i+1})}(x)\ge\sqrt{\frac{t_{i+1}}{C_1}}$
for any $x\in U:=B_{g(0)}(x_0,r_i-2\sqrt{\frac{t_{i+1}}{\tau}})$.
Now we apply the Conformal Change Lemma $\ref{l: conformal}$ with
$U=B_{g(0)}(x_0,r_i-2\sqrt{\frac{t_{i+1}}{\tau}})$, $N=B_{g(0)}(x_{0},r_{i})$, $g(t_{i+1})$ and $\rho^2:=\frac{t_{i+1}}{C_1}\le 1$, and obtain a new, possibly disconnected, smooth manifold $(\tilde{U},h)$, each component of which is complete, such that
\begin{enumerate}
 \item $ |\textnormal{Rm}|_{h}\le\gamma\frac{C_1}{t_{i+1}}=\frac{C_2}{t_{i+1}}$ and $\textnormal{inj}_h \ge\sqrt{\frac{t_{i+1}}{\gamma C_1}}=\sqrt{\frac{t_{i+1}}{C_2}}$ for all $x\in\tilde{U}$,
 \item $U_\rho\subset\tilde{U}\subset U$,
 \item $h=g(t_{i+1})$ on $\tilde{U}_\rho\supset U_{2\rho}$
\end{enumerate}
where $U_r=\{x\in U| B_g(x,r)\subset\subset U\}$.

\begin{claim}\label{this}
We have $B_{g(0)}(x_0,r_{i}-4\sqrt{\frac{t_{i+1}}{\tau}})\subset U_{2\rho}$ where the metric $g(t_{i+1})$ and $h$ agree.
\end{claim}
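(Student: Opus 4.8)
The plan is to unwind the definition $U_{2\rho}=\{x\in U\mid B_{g(t_{i+1})}(x,2\rho)\subset\subset U\}$ supplied by the Conformal Change Lemma (here $U=B_{g(0)}(x_0,r_i-2\sqrt{t_{i+1}/\tau})$ and $\rho=\sqrt{t_{i+1}/C_1}$) and to compare $g(t_{i+1})$-balls with $g(0)$-balls by means of the Shrinking Lemma. Since conclusion (3) of Lemma~\ref{l: conformal} gives $h=g(t_{i+1})$ on $\tilde U_\rho\supset U_{2\rho}$, it is enough to prove $B_{g(0)}(x_0,r_i-4\sqrt{t_{i+1}/\tau})\subset U_{2\rho}$.

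So I would fix a point $x$ in $B_{g(0)}(x_0,r_i-4\sqrt{t_{i+1}/\tau})$ and control the $g(0)$-size of the ball $B_{g(t_{i+1})}(x,2\rho)$. By (APA~1)--(APA~2) the restricted flow carries the bound $|\Rm|_{g(t)}\le C_3/t$ on $B_{g(0)}(x_0,r_i)$, so the Shrinking Lemma (Lemma~\ref{l: shrinking}), applied with centre $x$ and $c_0=C_3$ and with a slightly oversized radius to absorb the open/closed-ball discrepancy, shows $d_{g(0)}(x,z)\le 2\rho+\beta\sqrt{C_3\,t_{i+1}}$ for every $z$ with $d_{g(t_{i+1})}(x,z)\le 2\rho$; one should check along the way that the relevant $g(0)$-balls about $x$ stay inside $B_{g(0)}(x_0,r_i)\subset\subset M$, so that the hypotheses of the Shrinking Lemma are met.

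The remaining step is arithmetic with the constants fixed in \eqref{e: constants conditions} together with the reduction $2t_{i+1}\le\tau$: from $\tau\le C_1/4$ one gets $2\rho\le\sqrt{t_{i+1}/\tau}$, and from $\beta^2C_3\tau\le 1$ one gets $\beta\sqrt{C_3\,t_{i+1}}\le\sqrt{t_{i+1}/\tau}$, so altogether $d_{g(0)}(x,z)\le 2\sqrt{t_{i+1}/\tau}$. Feeding this into the triangle inequality with $d_{g(0)}(x,x_0)<r_i-4\sqrt{t_{i+1}/\tau}$ yields $d_{g(0)}(z,x_0)<r_i-2\sqrt{t_{i+1}/\tau}$, i.e. $z\in U$; as this holds for the whole closed ball $\overline{B_{g(t_{i+1})}(x,2\rho)}$, which is compact since it sits inside the relatively compact $B_{g(0)}(x_0,r_i)$, we conclude $B_{g(t_{i+1})}(x,2\rho)\subset\subset U$, hence $x\in U_{2\rho}$. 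Since $x$ was arbitrary the desired inclusion follows, and conclusion (3) of Lemma~\ref{l: conformal} then gives $g(t_{i+1})=h$ on that ball.

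I expect the only genuine subtlety — rather than the constant bookkeeping — to be the familiar self-referential nature of the Shrinking Lemma's curvature hypothesis: the bound is imposed only on the portion of the moving ball still lying in $B_{g(0)}(x_0,r_i)$, which is precisely where (APA~1)--(APA~2) guarantee it, so no circularity arises; everything else is routine.
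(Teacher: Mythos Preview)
Your proposal is correct and follows essentially the same argument as the paper: fix $x$ in the inner $g(0)$-ball, use (APA~2) and the Shrinking Lemma centered at $x$ with $c_0=C_3$ to compare the $g(t_{i+1})$-ball of radius $2\rho$ with a $g(0)$-ball, and then invoke the constant inequalities $\tau\le C_1/4$ and $\beta^2C_3\tau\le 1$ from \eqref{e: constants conditions} together with the triangle inequality to land inside $U$. The only cosmetic difference is that the paper first fixes the $g(0)$-ball $B_{g(0)}(x,2\sqrt{t_{i+1}/\tau})\subset\subset U$ and then shrinks, whereas you start from the $g(t_{i+1})$-ball and expand; the arithmetic and the ingredients are identical.
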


\begin{proof}[Proof of Claim \ref{this}]
By definition of $\rho$, $U$ and condition $\eqref{e: constants conditions}$, for  every $x\in B_{g(0)}(x_0,r_i-4\sqrt{\frac{t_{i+1}}{\tau}})$, the triangle inequality implies $B_{g(0)}(x,2\sqrt{\frac{t_{i+1}}{\tau}})\subset\subset U$. 
By (APA 2), we have $|\textnormal{Rm}|_{g(t)}\le\frac{C_3}{t}$ on $B_{g(0)}(x_0,r_{i})$, and hence on $B_{g(0)}(x, 2\sqrt{\frac{t_{i+1}}{\tau}})$ for all $t\in (0,t_{i+1}]$. Applying the Shrinking Lemma we have $B_{g(0)}(x,2\sqrt{\frac{t_{i+1}}{\tau}})\supset B_{g(t)}(x,2\sqrt{\frac{t_{i+1}}{\tau}}-\beta\sqrt{C_3 t})$ for all $t\in[0,t_{i+1}]$. Specializing to $t=t_{i+1}$ and use $\beta\sqrt{C_3 t_{i+1}}\le\sqrt{\frac{t_{i+1}}{\tau}}$
we see that $B_{g(t_{i+1})}(x,\sqrt{\frac{t_{i+1}}{\tau}})\subset\subset U$. By $\eqref{e: constants conditions}$ this gives $B_{g(t_{i+1})}(x,2\sqrt{\frac{t_{i+1}}{C_1}})\subset\subset U$ which means $x\in U_{2\rho}$.
\end{proof}

In view of Claim $\ref{this}$ we define the connected component of $(\tilde{U},h)$ that contains $B_{g(0)}(x_0,r_i-4\sqrt{\frac{t_{i+1}}{\tau}})$ as $M_{i+1}$. Then we restart the flow from $(M_{i+1},h)$ using Shi's complete bounded curvature Ricci flow.
By the doubling time estimate (Lemma $\ref{doubling}$), we have a complete Ricci flow $(M_{i+1},h(t))$ with $h(0)=h$ existing for $t\in[0,(\nu-1) t_{i+1}]$ and satisfying
\begin{equation}\label{e: C_2 and A_3}
|\textnormal{Rm}|_{h(t)}(y)\le 2\frac{C_2}{t_{i+1}} \;\;\;\;\textnormal{and}\;\;\;\;
Vol_{h(t)}B_{h(t)}(y,\sqrt{t_{i+1}})\ge\frac{t_{i+1}^{\frac{n}{2}}}{A_0} 
\end{equation}
for all $y\in M_{i+1}$, where $A_0$ is a constant depending on $C_2$ and thus on $v_0$ and $n$.
Setting $g_{i+1}(t)=h(t-t_{i+1})$ for $t\in[t_{i+1},t_{i+2}]=[t_{i+1},\nu t_{i+1}]$, we obtain a new Ricci flow in expansion $(\{M_j\}_{j=1}^{i+1},\{g_j(t)\}_{j=1}^{i+1},\nu)$, which clearly satisfies (APA 1). By $\eqref{e: C_2 and A_3}$ and $t_{i+2}=\nu t_{i+1}$ we have
\begin{equation}\label{Rm}
|\textnormal{Rm}|_{g(t)}(y)\le 2\frac{C_2}{t_{i+1}}\le\frac{C_3}{t}
\end{equation}
for all $t\in[t_{i+1},t_{i+2}]$. Hence we verified (APA 2). For the same reason, we have 
\begin{equation}\label{Vol}
Vol_{g(t)}B_{g(t)}(y,\sqrt{t})\ge\frac{t_{i+1}^{\frac{n}{2}}}{A_0}\ge\frac{t^{\frac{n}{2}}}{A}
\end{equation}
for all $t\in[t_{i+1},t_{i+2}]$, where $A=A_0\nu^{\frac{n}{2}}$ also depends on $v_0$ and $n$. The volume estimate is needed to apply Proposition $\ref{generalized heat kernel}$ in next section.


\end{section}

\begin{section}{Induction Step: Verification of (APA 3)}
In this section we finish the proof of Theorem $\ref{t: theorem1}$ by verifying (APA 3) for $(\{M_j\}_{j=1}^{i+1},\{g_j(t)\}_{j=1}^{i+1},\nu)$. More specifically, we determine  $r_{i+1}$ such that when restricted on $B_{g(0)}(x_0,r_{i+1})$, the smooth Ricci flow $g(t)$ satisfies $\ell(x,t)\le C_4\alpha_0<1$ for all $t\in[0,t_{i+2}]$. The estimates $\eqref{Rm}$ and $\eqref{Vol}$ allow us to apply Proposition $\ref{generalized heat kernel}$ to $(\{M_j\}_{j=1}^{i+1},\{g_j(t)\}_{j=1}^{i+1},\nu)$, and get the Gaussian bound for the generalized heat kernel $G(x,t;y,s)$: 
\begin{equation}
G(x,t;y,s)\le\frac{C}{(t-s)^{\frac{n}{2}}}exp(-\frac{d_{s^+}^2(x,y)}{C(t-s)}),
\end{equation}
where $C$ depends on $v_0$ and $n$. We will frequently use this inequality implicitly in this section. Also for notational convenience, the same letter $C$ will be used to denote positive constants depending on $n$ and $v_0$. We divide the integration estimates of $\ell$ into two steps. 

\textbf{Step 1}
\quad We derive a rough bound for $\ell$. Specifically, we show that $\ell$ is bounded above by a constant depending only on $v_0$ and $n$. This bound gives a lower bound for Ricci curvature with the same dependence, which will be used in the second step.
\begin{claim}\label{rough bound}
For any $(x,t)\in B_{g(0)}(x_0,r_{i}-4\sqrt{\frac{t_{i+1}}{\tau}}-\sqrt[4]{t_{i+2}})\times[0,t_{i+2}]$, we have $\ell(x,t)\le C$ and correspondingly $\Ric\ge -K$, where both $C$ and $K$ are positive constants depending only on $v_0$ and $n$. 
\end{claim}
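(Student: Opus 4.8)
The plan is to prove the rough bound by the localized heat‑kernel scheme of the introduction, run inside a continuity argument in time that takes care of the fact that $\ell\le 1$ is not yet known on the new interval $[t_{i+1},t_{i+2}]$. Fix the radius $R:=r_i-4\sqrt{t_{i+1}/\tau}-\sqrt[4]{t_{i+2}}$ of the claim, and apply Lemma \ref{l: cutoff} to the Ricci flow in expansion with $c_0=C_3$, $T=t_{i+2}$, width $r=4\beta\sqrt{C_3 t_{i+2}}$, and a base radius slightly larger than $R$ but still $<r_i-4\sqrt{t_{i+1}/\tau}$, so that $\operatorname{supp}\phi(\cdot,s)\subset\subset M_{i+1}\cap B_{g(0)}(x_0,r_i)\subset B_{g(0)}(x_0,s_0-1)$ and $\{\phi(\cdot,s)=1\}\supset B_{g(0)}(x_0,R)$; since $t_{i+2}\le\tau$ is small the hypotheses of Lemma \ref{l: cutoff} hold, and (on the interval where a Ricci lower bound is available, see below) $\phi$ satisfies (P1)--(P4) with $r$‑dependent constants. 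Set $\LL:=e^{-C(n)t}\ell$, and let
\[
t^{\ast}:=\sup\bigl\{\,t'\in[0,t_{i+2}]\ :\ \ell\le 1\ \text{on}\ \operatorname{supp}\phi(\cdot,s)\times[0,t']\,\bigr\},
\]
which is positive because $\ell$ is continuous and $\ell(\cdot,0)\le\alpha_0<1$. On $[0,t^{\ast})$ we have $\ell\le 1$, hence $\Ric\ge-K(n)$ on $\operatorname{supp}\phi$ (which is what makes Lemma \ref{l: cutoff} applicable there), and the evolution inequality $\pt\ell\le\Delta\ell+\scal\,\ell+C(n)\ell^2$ improves to $(\pt-\Delta)\LL\le\scal\,\LL$ in the barrier sense.

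\textbf{The key inequality.} Fix $x\in B_{g(0)}(x_0,R)$ and $t<t^{\ast}$, and let $G(x,t;\cdot,\cdot)$ be the generalized heat kernel of Proposition \ref{generalized heat kernel}, whose Gaussian bound \eqref{e: heat kernel} and gradient bound (Claim \ref{claim: gradient of G}) are available thanks to \eqref{Rm} and \eqref{Vol}. Combining the barrier inequality for $\LL$, the backward heat equation satisfied by $G(x,t;\cdot,\cdot)$ on each piece $M_j\times(t_j,\min(t_{j+1},t)]$, the weak integration‑by‑parts Lemmas \ref{g smooth}--\ref{smooth of cut-off} applied with test weight $\phi$ (these are exactly what allow the merely continuous $\ell$ and $\phi$), and a check that $s\mapsto\int_M\LL\,\phi\,G(x,t;\cdot,s)\,d_s(\cdot)$ does not jump upward at the expanding times $t_j$ — here one uses the jump relation $g_j(t_j)\ge g_{j-1}(t_j)$ together with $\int_{M_{t_j^+}}G\le 1$ from \eqref{e: crucial2} — one obtains that this quantity grows in $s$ at rate at most $\int_M\LL\bigl(\Delta\phi\,G+2\nabla\phi\cdot\nabla G+\psp\phi\,G\bigr)d_s(\cdot)$. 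Letting $s\nearrow t$, and using that $\phi(x,s)=1$ near $s=t$ by \eqref{e: last} while $G(x,t;\cdot,s)\to\delta_x$, this yields
\[
\LL(x,t)\ \le\ \int_M\ell(y,0)\,\phi(y,0)\,G(x,t;y,0)\,d_0y\ +\ \int_0^t\!\!\int_M\LL\bigl(\Delta\phi\,G+2\nabla\phi\cdot\nabla G+\psp\phi\,G\bigr)\,d_sy\,ds .
\]

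\textbf{Estimating the two terms and closing the bootstrap.} For the first term, $\phi(\cdot,0)$ is supported where $\Ric_{g(0)}\ge-(n-1)K(n)$ (since $\ell(\cdot,0)\le\alpha_0\le1$), so the Gaussian bound for $G$ together with Lemma \ref{calculation: whole integral} gives that it is $\le\alpha_0\cdot C(n,v_0)$, which is small once we require $\alpha_0\le\frac{1}{2C_4}$ with $C_4=C_4(n,v_0)$ chosen sufficiently large. For the second term, $\nabla\phi,\Delta\phi,\psp\phi$ are supported on the annulus $\mathcal A=\operatorname{supp}\phi(\cdot,s)\setminus\{\phi(\cdot,s)=1\}$, which by the Shrinking Lemma stays at $g(s)$‑distance $\ge\frac12\sqrt[4]{t_{i+2}}$ from $x$ for every $s\in[0,t]$ — the gap $\sqrt[4]{t_{i+2}}$ dominating both the width $r\sim\sqrt{t_{i+2}}$ and the distance distortion $\beta\sqrt{C_3 t_{i+2}}$. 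On $\mathcal A$, Proposition \ref{generalized heat kernel} with Lemma \ref{calculation: inequity} bounds $G$, and Claim \ref{claim: gradient of G} (whose hypothesis \eqref{35} holds since $\sqrt[4]{t_{i+2}}\gg\sqrt{t_{i+2}}$) bounds $\nabla G$, by $\frac{C}{t_{i+2}^{n/2}}\exp\!\bigl(-c/\sqrt{t_{i+2}}\bigr)\bigl(1+(s-t_{j(s)})^{-1/2}\bigr)$; with $|\nabla\phi|,|\Delta\phi|,|\psp\phi|\le Cr^{-(2n+2)}$, $\LL\le1$, and $Vol_{g(s)}\mathcal A\le C(n,v_0)$, integrating in $y$ and then in $s$ — split over the pieces $[t_j,t_{j+1}]$, using $\int_{t_j}^{t_{j+1}}(s-t_j)^{-1/2}ds\le C\sqrt{t_j}$ and $\sum_j\sqrt{t_j}\le C(n,v_0)\sqrt{t_{i+2}}$ — shows the second term is $\le C(n,v_0)\,t_{i+2}^{-N}\exp(-c/\sqrt{t_{i+2}})$ for some $N=N(n)$, hence $<\frac12$ provided $\tau$ (so $t_{i+2}\le\tau$) is chosen small, depending only on $n,v_0$, within \eqref{e: constants conditions}. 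Thus $\ell(x,t)=e^{C(n)t}\LL(x,t)<1$ for all $x\in B_{g(0)}(x_0,R)$ and $t<t^{\ast}$, so by continuity $t^{\ast}=t_{i+2}$; therefore $\ell\le1$ on $B_{g(0)}(x_0,R)\times[0,t_{i+2}]$, and since $\ell\le1$ forces $\Ric\ge-K(n)$, this is the claim (indeed with $C,K$ dimensional).

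\textbf{Main obstacle.} The delicate point is making the annular error term uniformly small \emph{and} uniformly summable in $i$. Two issues must be handled: first, the two distinct notions of $g(s)$‑distance from $x$ to $\mathcal A$ across every metric jump $t_j$, which is controlled precisely by the expanding conditions $g_{j}(t_{j+1})\ge g_{j-1}(t_{j+1})$ combined with the Shrinking Lemma; second, the $(s-t_j)^{-1/2}$ singularity of the gradient bound at each expanding time, whose $s$‑integral is a geometric series summing to $\le C\sqrt{t_{i+2}}$ exactly because $t_{j+1}=\nu t_j$. A subsidiary technical point, already prepared for by Lemmas \ref{g smooth}--\ref{smooth of cut-off}, is that the integration‑by‑parts manipulation in the key inequality, formally valid only for smooth $\LL$ and $\phi$, survives for continuous data with barrier‑sense derivative bounds.
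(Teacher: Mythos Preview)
Your approach has a genuine gap: the continuity argument does not close. You define $t^\ast$ via $\ell\le1$ on $\operatorname{supp}\phi$, and then prove $\ell<1$ only for those $x$ with $\phi(x,\cdot)\equiv1$ \emph{and} lying at $g(s)$-distance $\gtrsim\sqrt[4]{t_{i+2}}$ from the annulus $\mathcal A$. That region is strictly smaller than $\operatorname{supp}\phi$ --- indeed Lemma~\ref{l: cutoff} already requires the Ricci lower bound on the even larger set $B_{g(0)}(x_0,R'+r)$. So from $\ell<1$ on the inner ball you cannot deduce $\ell\le1$ on $\operatorname{supp}\phi$ at time $t^\ast$, and the supremum does not advance. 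A second instance of the same circularity: your linearization $(\partial_t-\Delta)\LL\le\scal\,\LL$ with $\LL=e^{-C(n)t}\ell$ itself requires $\ell\le1$ (to absorb the $C(n)\ell^2$ term), so the key inequality cannot even be written on the full support without the very bound you are trying to establish.

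The paper sidesteps both issues by a much simpler device that you overlook: on the new piece $[t_{i+1},t_{i+2}]$ the flow $(M_{i+1},g_{i+1}(t))$ is \emph{complete}, and the curvature decay $|\Rm|\le C_3/t$ already gives the a priori bound $\ell\le C/t_{i+1}$ \emph{everywhere} on $M_{i+1}$, with no localization. One linearizes with this rough bound, setting $\LL=e^{-(C/t_{i+1})t}\ell$; since $t-t_{i+1}\le(\nu-1)t_{i+1}$ the exponential factor stays uniformly bounded. Then the ordinary (not generalized) heat kernel on the complete $M_{i+1}$ together with the global maximum principle give $\LL(x,t)\le\int_{M_{i+1}}G(x,t;y,t_{i+1})\LL(y,t_{i+1})\,d_{t_{i+1}}y$, and one splits this single integral over $B_{g(0)}(x,\sqrt[4]{t_{i+1}})$ (where $\ell(\cdot,t_{i+1})\le1$ by (APA~3)) and its complement (where only $\LL\le C/t_{i+1}$ is used, compensated by the Gaussian tail). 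No cut-off, no bootstrap, no generalized heat kernel across all pieces --- those tools are reserved for Step~2, \emph{after} the rough bound (hence a uniform Ricci lower bound on the enlarged region) is secured.
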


\begin{proof}
Since $(\{M_j\}_{j=1}^{i},\{g_j(t)\}_{j=1}^{i},\nu)$ satisfies (APA 3), we have $\ell(\cdot,\cdot)\le 1$ on $B_{g(0)}(x_0,r_i)\times[0,t_{i+1}]$. Thus it only remains to show $\ell(x,t)\le C$ for $t\in[t_{i+1},t_{i+2}]$. Recall the evolution inequality of $\ell$. 
\begin{equation}\label{e: u_original}
\pt \ell(x,t)\le\Delta \ell(x,t)+\textnormal{scal}(x,t)\ell(x,t)+C(n)\ell^2(x,t).
\end{equation}
Using the curvature decay $|\textnormal{Rm}|_{g(t)}\le\frac{C_3}{t}$ we obtained in Section 6, we have $\ell(x,t)\le\frac{C}{t_{i+1}}$ for all $(x,t)\in M_{i+1}\times[t_{i+1},t_{i+2}]$. Substituting this into $\eqref{e: u_original}$, we get	
\begin{equation}\label{e: u_rough}
\frac{\partial}{\partial t}\ell\le\Delta \ell+\textnormal{scal}\,\ell+C\ell^2\le\Delta\ell +\textnormal{scal}\ell+\frac{C}{t_{i+1}}\ell
\end{equation}
in the barrier sense. For any $t\in[t_{i+1},t_{i+2}]$, set $\mathcal{L}(x,t)=\ell(x,t)e^{-\frac{C}{t_{i+1}}t}$. Then $\mathcal{L}(x,t_{i+1})=\ell(x,t_{i+1})e^{-C}\le e^{-C}$ and 
\begin{equation}\label{e: U}
\frac{\partial}{\partial t}\mathcal{L}\le \Delta \mathcal{L}+ \textnormal{scal}\,\mathcal{L}
\end{equation}
in the barrier sense. Let $h(x,t)=\int_{M_{i+1}} G(x,t;z,t_{i+1})\mathcal{L}(z,t_{i+1})d_{t_{i+1}}z$, then $h$ solves the following initial value problem:
\begin{equation}
\ps h=\Delta h+\scal\,h \;\;\;\textnormal{and}\;\;\;h(\cdot,t_{i+1})=\LL(\cdot,t_{i+1}).
\end{equation}
By the maximum principle, we have
\begin{equation}
\mathcal{L}(x,t)\le h(x,t)=\mathcal{I}[M_{i+1}]:=\int_{M_{i+1}} G(x,t;y,t_{i+1})\mathcal{L}(y,t_{i+1})\,d_{t_{i+1}}y
\end{equation}
for all $x\in M_{i+1}$ and $t\in[t_{i+1},t_{i+2}]$. 

Seeing that $M_i\supset B_{g(0)}(x_0,r_{i}-4\sqrt{\frac{t_{i+1}}{\tau}})$ where the local flow is smooth from $t=0$, we fix some $x\in B_{g(0)}(x_0,r_{i}-4\sqrt{\frac{t_{i+1}}{\tau}}-\sqrt[4]{t_{i+2}})$ and split the integral $\mathcal{I}[M_{i+1}]$ into two integrals over $\mathcal{B}_{i+1}:=B_{g(0)}(x,\sqrt[4]{t_{i+1}})$ and $\mathcal{C}_{i+1}:=M_{i+1}-B_{g(0)}(x,\sqrt[4]{t_{i+1}})$. 
Since $\mathcal{B}_{i+1}\subset B_{g(0)}(x_0,r_i-4\sqrt{\frac{t_{i+1}}{\tau}})$ where $\LL(\cdot,t_{i+1})\le\ell(\cdot,t_{i+1})\le 1$ by (APA 3), we can estimate
\begin{equation}\label{seven}
\mathcal{I}[\mathcal{B}_{i+1}]\le \int\displaylimits_{M_{i+1}} G(x,t;y,s)\,d_s y\le C.
\end{equation}
For any $y\in \mathcal{C}_{i+1}$, by the Shrinking Lemma and the assumption on $t_{i+1}$ and $t_{i+2}$ from $\eqref{e: constants conditions}$ we have 
\begin{equation}
d_{t_{i+1}}(x,y)\ge\sqrt[4]{t_{i+2}}-\beta\sqrt{C_3}\sqrt{t_{i+1}}\ge\frac{1}{2}\sqrt[4]{t_{i+2}}\ge \sqrt{t_{i+2}}\ge\sqrt{t-t_{i+1}},
\end{equation}
and hence by Lemma $\ref{calculation: inequity}$ we have 
\begin{equation}\label{quickly}
G(x,t;y,t_{i+1})\le\frac{C}{(t_{i+2})^{\frac{n}{2}}}exp(-\frac{1}{C\sqrt{t_{i+2}}}).
\end{equation}
On the other hand, (APA 2) implies $\LL(y,t_{i+1})\le\frac{C}{t_{i+1}}$, which combining with $\eqref{quickly}$ and Lemma $\ref{calculation: partial integral}$ imply 
\begin{equation}\label{ten}
\mathcal{I}[\mathcal{C}_{i+1}]\le C\,exp(-\frac{1}{C\sqrt{t_{i+2}}})\le C.
\end{equation}
Hence Claim $\ref{rough bound}$ follows by $\eqref{seven}$ and $\eqref{ten}$.
\end{proof}

\textbf{Step 2}\quad 
It remains to convert this upper bound in Lemma $\ref{rough bound}$ to the stronger upper bound as claimed in (APA 3). Using the bound for $\ell$ from Claim $\ref{rough bound}$, we get the following linearization of the evolution equation for $\ell$ on $B_{g(0)}(x_0,r_i-4\sqrt{\frac{t_{i+1}}{\tau}}-\sqrt[4]{t_{i+2}})\times[0,t_{i+2}]$: 
\begin{equation}\label{e: u_better}
\frac{\partial \ell}{\partial t}\le\Delta \ell+\textnormal{scal}\ell+C(n)\ell^2 \le\Delta \ell+\textnormal{scal}\ell+C\ell
\end{equation}
in the barrier sense. Setting $\mathcal{L}(\cdot,t)=e^{-Ct}\ell(\cdot,t)$,
we get
$\frac{\partial}{\partial t}\mathcal{L}\le\Delta \mathcal{L}+\textnormal{scal}\mathcal{L}$ on the same region as above, in the barrier sense.

Hereafter, we choose and fix an arbitrary $(x,t)\in B_{g(0)}(x_0,r_i-4\sqrt{\frac{t_{i+1}}{\tau}}-6\sqrt[4]{t_{i+2}})\times(t_{i+1},t_{i+2}]$. Let $r=\sqrt[4]{t_{i+2}}$ and $R=3r$, then by triangle inequality, $B_{g(0)}(x,R+2r)\subset\subset B_{g(0)}(x_0,r_i-4\sqrt{\frac{t_{i+1}}{\tau}}-\sqrt[4]{t_{i+2}})$, where by Claim $\ref{rough bound}$ we have $\textnormal{Ric}_{g(s)}\ge -K(v_0,n)$ for all $s\in[0,t_{i+2}]$. We now apply Lemma $\ref{l: cutoff}$ to the flow on $B_{g(0)}(x,R+2r)$ during $[0,t_{i+2}]$, and obtain a cut-off function $\phi_{i+1}$ such that 
\begin{equation}\label{e: last1}
B_{g(s)}(x,r)\subset B_{g(0)}(x,2r)\subset \{y\,|\,\phi_{i+1}(y,s)=1\}
\end{equation}
and $supp\,\phi_{i+1}(\cdot,s)\subset B_{g(0)}(x,3r)$, for all $s\in[0,t_{i+2}]$.
Combining with $\eqref{e: last1}$, we find that the supports of $|\nabla\phi_{i+1}|$, $\ps\psi_{i+1}$ and $\Delta\phi_{i+1}$ are all contained in the annulus $A_{2r,3r}(x):=B_{g(0)}(x,3r)-B_{g(0)}(x,2r)$ and we have the following estimates:
\begin{description}
\item[(P1)] $\nabla\phi_{i+1}$ exists a.e. and
$|\nabla\phi_{i+1}|\le =C\,t_{i+2}^{-\frac{n+1}{4}}$; 
\item[(P2)] $\Delta\phi_{i+1}\le  \mu_1:=C\,t_{i+2}^{-\frac{n+1}{2}}$, in the barrier sense;
\item[(P3)] $\psp\phi_{i+1}\le \mu_2:=C\,t_{i+2}^{-\frac{n}{4}}$.
\end{description}
 
In view of $\eqref{e: last1}$ we have $\phi_{i+1}(x,t)=1$ and hence
\begin{equation}
\mathcal{L}(x,t)
=\lim_{s\nearrow t}\int G(x,t;y,s)\mathcal{L}(y,s)\phi_{i+1}(y,s)d_s y.
\end{equation}
The integration domain here and below is always $B_{g(0)}(x,3r)$. In particular, for any integral involving $\nabla \phi_{i+1},\ps\phi_{i+1}$ or $\Delta \phi_{i+1}$, the actual integration domain is contained in $A_{2r,3r}(x)$ since these derivatives vanish at the outside.

Since $G(x,t;\cdot,\cdot)$ is continuous on $B_{g(0)}(x,3r)\times[0,t)$ and smooth on $B_{g(0)}(x,3r)\times(t_j,\min(t_{j+1},t))$ for each $j\le i+1$,
applying Lemma $\ref{smooth of cut-off}$ to $G(x,t;y,s)\phi_{i+1}(y,s)$ and $\LL(y,s)$ and using (P1)-(P3) we obtain
\begin{equation}
\left.\int G\,\phi_{i+1}\LL\right|^{\min(t_{j+1},t)}_{t_j}\le\int_{t_j}^{\min(t_{j+1},t)}\int(G\,\mu_1+G\,\mu_2+2\left\langle\nabla G,\nabla\phi_{i+1}\right\rangle)\LL,
\end{equation}
and hence 
\begin{equation}\label{I+J+K}
\begin{split}
\LL(x,t)
&\le\left.\int G\,\phi_{i+1}\LL\right|_{t_1}+\int_{t_1}^{t}\int(G\,\mu_1+G\,\mu_2+2\left\langle\nabla G,\nabla\phi_{i+1}\right\rangle)\LL.
\end{split}
\end{equation}

To estimate the first term in the RHS of $\eqref{I+J+K}$, we first note that on $B_{g(0)}(x,3r)\subset B_{g(0)}(x_0,r_i-4\sqrt{\frac{t_{i+1}}{\tau}}-\sqrt[4]{t_{i+2}})$ we have $\LL(\cdot,t_1)\le 2\alpha_0< 1$ and hence $\Ric_{g(t_1)}\ge-C(n)$ for some dimensional constant $C(n)$. Then applying Lemma $\ref{calculation: whole integral}$ we get 
\begin{equation}\label{latter}
\left.\int G\,\phi_{i+1}\,\LL\right|_{t_1}\le C\cdot 2\alpha_0.
\end{equation}
Then we split the second term in the RHS of $\eqref{I+J+K}$ into two parts:
\begin{equation}\label{haha}
\mathcal{I}=\int_{t_1}^{t}\int (\mu_1+\mu_2)\,G(x,t;y,s)\,\LL(y,s)\,d_sy\,ds,	
\end{equation}
\begin{equation}\label{peach}
\mathcal{J}=2\int_{t_1}^t\int\left\langle\nabla G(x,t;y,s),\nabla\phi_{i+1}(y,s)\right\rangle\mathcal{L}(y,s)\,d_sy\,ds.
\end{equation}
On the one hand, by the Shrinking Lemma, for all $y$ in $A_{2r,3r}(x)$ and $s\in[0,t_{i+2}]$, we have $d_{g(s)}(x,y)\ge d_{g(0)}(x,y)-\sqrt[4]{t_{i+2}}\ge\sqrt[4]{t_{i+2}}$. Thus by Lemma $\ref{calculation: inequity}$ we have 
\begin{equation}\label{hey}
G(x,t;y,s)\le\frac{C}{(t-s)^{\frac{n}{2}}}exp\left(-\frac{\sqrt{t_{i+2}}}{C(t-s)}\right)\le\frac{C}{t_{i+2}^{\frac{n}{2}}}exp\left(-\frac{1}{C\sqrt{t_{i+2}}}\right).
\end{equation}
On the other hand, let $V(s)$ be the volume of $A_{2r,3r}(x)$ at time $s\in[0,t_{i+2}]$. By Bishop-Gromov comparison, we have $V(0)\le C(n)$. Then we get $V(s)\le C$ by integrating $V'(s)\le C\,V(s)$, which follows from the evolution equation of volume under Ricci flow and Claim $\ref{rough bound}$.
Combining this with $\eqref{hey}$, (P2), (P3) and Claim $\ref{rough bound}$ in $\eqref{haha}$ we can estimate
\begin{equation}\label{I}
 \mathcal{I}\le C\,exp\left(-\frac{1}{C\sqrt{t_{i+2}}}\right).
\end{equation}
Suppose $s \in (t_j,\min(t_{j+1},t))$ for some $j\le i+1$. Since $d_{g(s)}(x,y)\ge \sqrt{t_{i+2}}$ for all $y\in A_{2r,3r}(x)$, applying Claim $\ref{claim: gradient of G}$ of the estimate of $|\nabla G|$, we obtain 
\begin{equation}
\begin{split}
|\nabla G|(x,t;y,s)\le
\frac{C}{\sqrt{s-t_j}}exp(-\frac{1}{C \sqrt{t_{i+2}}}),
\end{split}
\end{equation}
where the constant $C$ depending on $n$ and $v_0$ is uniform for all $j$.
Then by Claim $\ref{rough bound}$ we have
\begin{equation}\label{quick}
 |\left\langle\nabla G, \nabla \phi_{i+1}\right\rangle|(y,s)\LL(y,s)\le\frac{C}{\sqrt{s-t_j}}exp(-\frac{1}{C \sqrt{t_{i+2}}}).
\end{equation}
Integrating $\eqref{quick}$ over $A_{2r,3r}(x)\times[t_j,t_{j+1}]$, and then summing over all $j$, we obtain
\begin{equation}\label{J}
\begin{split}
\mathcal{J}&\le Cexp(-\frac{1}{C\sqrt{t_{i+2}}})\sum_{j=1}^{i+1}\sqrt{t_{j+1}-t_j}\\
&=C exp(-\frac{1}{C\sqrt{t_{i+2}}})\sqrt{t_{i+2}(1-\frac{1}{\nu})}(1+\frac{1}{\sqrt{\nu}}+\dots)\\
&=C exp(-\frac{1}{C\sqrt{t_{i+2}}})\frac{\sqrt{(\nu-1)t_{i+2}}}{\sqrt{v}-1}\le C exp(-\frac{1}{C\sqrt{t_{i+2}}}).
\end{split}
\end{equation}
Putting the estimates $\eqref{latter}$, $\eqref{I}$ and $\eqref{J}$ into $\eqref{I+J+K}$, we thus have
\begin{equation}
\mathcal{L}(x,t)\le C exp(-\frac{1}{C\sqrt{t_{i+2}}})+2\alpha_0 C.
\end{equation}
Then there exists positive constant $t(n,v_0,\alpha_0)$, such that the first term can be bounded by $\alpha_0$ when $t_{i+2}\le t(n,v_0,\alpha_0)$, and hence $\LL(x,t)\le \alpha_0(1+2C)$.
Choose $C_4=2(1+2C)$, then $\ell(x,t)\le \mathcal{L}(x,t)e^{Ct}\le 2\mathcal{L}(x,t)\le C_4\alpha_0$. Let $r_{i+1}=r_{i}-4\sqrt{\frac{t_{i+1}}{\tau}}-6\sqrt[4]{t_{i+2}}$, then $\ell(y,s)\le C_4\alpha_0<1$ for any $(y,s)\in B_{g(0)}(x_0,r_{i+1})\times[0,t_{i+2}]$, as claimed in (APA 3). Moreover, by choosing $t(n,v_0,\alpha_0)$ small, we can make sure   
\begin{equation}
r_0-r_{i+1}=\sum\limits_{j=0}^{i} r_{j}-r_{j+1}\le\sum\limits_{j=0}^{i} 4\sqrt{\frac{t_{j+1}}{\tau}}+6\sqrt[4]{t_{j+2}}\le 1.
\end{equation}
So we proved Theorem $\ref{t: theorem1}$.


\end{section}

\begin{section}{Proof of the global existence and bi-H$\ddot{\textnormal{\textbf{o}}}$lder homeomorphism}
  In this section we prove Corollary $\ref{global existence}$ and Corollary $\ref{limit space}$. We need two local curvature estimate lemmas stated below, in both of which the Riemannian manifolds $(M^n,g)$ appearing are not necessarily complete.
  
Lemma $\ref{pseudo}$ is proved by B.L. Chen in \cite[Theorem~3.1]{ChenBL} and Simon in \cite[Theorem~1.3]{speed}.
\begin{lem}\label{pseudo}
 Suppose $(M^n,g(t))$ is a Ricci flow for $t\in[0,T]$, not necessarily complete, with the property that for some $y_0\in M$ and $r>0$, and all $t\in[0,T]$, we have $B_{g(t)}(y_0,r)\subset\subset M$ and 
 \begin{equation}
 |\Rm|_{g(t)}\le\frac{c_0}{t}
 \end{equation}
on $B_{g(t)}(y_0,r)$ for all $t\in(0,T]$ and some $c_0\ge 1$. Then if $|Rm|_{g(0)}\le r^{-2}$ on $B_{g(0)}(y_0,r)$, we must have
\begin{equation}
 |\Rm|_{g(t)}(y_0)\le e^{Cc_0}r^{-2}
\end{equation}
for some $C=C(n)$.
\end{lem}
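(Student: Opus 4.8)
First I would reduce to $r=1$ via the parabolic rescaling $g(t)\rightsquigarrow r^{-2}g(r^2 t)$, under which every hypothesis and the asserted conclusion are invariant. Writing $\beta=\beta(n)$ for the constant of the Shrinking Lemma and setting $t_0:=\min\{T,(4\beta^2c_0)^{-1}\}$, one gets $|\Rm|_{g(t)}(y_0)\le c_0/t<4\beta^2c_0^2\le e^{Cc_0}$ for all $t>t_0$ once $C=C(n)$ is large enough (using $c_0\ge 1$); so the real content is to bound $|\Rm|_{g(t)}(y_0)$ for $t\in(0,t_0]$.

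Next I would confine the argument to a fixed ball. By the Shrinking Lemma (Lemma \ref{l: shrinking}), applied with radius $1$ and the bound $|\Rm|_{g(t)}\le c_0/t$, one has $B_{g(t)}(y_0,\tfrac12)\subset B_{g(0)}(y_0,1)$ for all $t\in[0,t_0]$. Hence on the parabolic region $\mathcal{P}:=\{(x,t):t\in[0,t_0],\ x\in B_{g(t)}(y_0,\tfrac12)\}$ the curvature is controlled both by $c_0/t$ (for $t>0$) and by $1$ (at $t=0$); moreover $\Ric_{g(t)}\ge-(n-1)c_0/t$ there, so Laplacian comparison controls $\Delta_{g(t)}d_{g(t)}(y_0,\cdot)$ and $\tfrac{\partial}{\partial t}d_{g(t)}(y_0,\cdot)$ on $\mathcal{P}$ away from the cut locus, the only singular factor being an integrable $\sqrt{c_0/t}$.

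The core is then a local curvature estimate: from $|\Rm|_{g(0)}\le 1$ on $B_{g(0)}(y_0,1)$ and $|\Rm|\le c_0/t$ on $\mathcal{P}$, produce the $t$-independent bound $|\Rm|_{g(t)}(y_0)\le e^{Cc_0}$. My plan for this is a blow-up contradiction in the spirit of Lemma \ref{decay or no decay}. If no such $C(n)$ existed, there would be a sequence of Ricci flows verifying the hypotheses on which $|\Rm|$ blows up at the centre at times $t_k\to 0$; a Perelman-type point-picking inside $\mathcal{P}$ then produces spacetime points $(\bar x_k,\bar t_k)$ of almost-maximal curvature $Q_k:=|\Rm|(\bar x_k,\bar t_k)\to\infty$ with $|\Rm|\le 4Q_k$ on parabolic neighbourhoods of size $cQ_k^{-1/2}\times cQ_k^{-1}$ and $Q_k\bar t_k\ge c_0$. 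Parabolically rescaling by $Q_k$ and translating time to $\bar t_k$, one would pass to a limit Ricci flow $(N,g_\infty(s))$ with $|\Rm|_{g_\infty(0)}(x_\infty)=1$, defined on $(-\ell,0]$ with $\ell=\lim Q_k\bar t_k\in[c_0,\infty]$. But under the rescaling the bound becomes $|\Rm|_{g_\infty(s)}\le c_0/(s+\ell)$, while the slice corresponding to the original time $0$ has rescaled curvature $\le Q_k^{-1}\to 0$; whether $\ell=\infty$ (so the bound degenerates pointwise to $0$) or $\ell<\infty$ (so the flow starts from flat data and is therefore flat by forward uniqueness), this contradicts $|\Rm|_{g_\infty(0)}(x_\infty)=1$.

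The hard part will be the compactness step in the last paragraph: the hypotheses carry no volume lower bound at unit scale, so there is no immediate non-collapsing along the rescaled sequence and Cheeger--Gromov/Hamilton compactness is not directly available (in contrast with Lemma \ref{l: curvature decay}, where the volume hypothesis yields an injectivity-radius bound via Cheeger--Gromov--Taylor). This is exactly where the work of B.-L. Chen \cite{ChenBL} and Simon \cite{speed} lies: they run a direct barrier/ODE-comparison argument that never forms a smooth limit, and that must also accommodate the barrier (rather than classical) sense of $(\tfrac{\partial}{\partial t}-\Delta)|\Rm|\le c(n)|\Rm|^2$ together with the degeneration of $d_{g(t)}$ as $t\to0$. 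For the full details I would appeal to those two references.
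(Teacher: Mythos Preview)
The paper does not prove this lemma at all: immediately before the statement it simply says ``Lemma~\ref{pseudo} is proved by B.~L.~Chen in \cite[Theorem~3.1]{ChenBL} and Simon in \cite[Theorem~1.3]{speed},'' and moves on. So in the end your proposal and the paper agree---both defer to those references---and your closing paragraph is exactly right about where the substance lies.

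That said, the blow-up sketch you offer before deferring has real obstructions, not just the compactness issue you flag. First, the Perelman point-picking you invoke (as in Lemma~\ref{decay or no decay}) only yields $|\Rm|\le 4Q_k$ on a parabolic neighbourhood of size $\asymp c_0 Q_k^{-1/2}\times c_0 Q_k^{-1}$; after rescaling by $Q_k$ this is a region of \emph{fixed} size $\asymp c_0$, not one that exhausts a complete manifold or a time interval of length $\ell=\lim Q_k\bar t_k$. So even with an injectivity-radius bound you would not obtain a limit on $(-\ell,0]$, and the dichotomy ``$\ell=\infty$ forces $|\Rm|\to 0$ / $\ell<\infty$ gives flat initial data'' never gets off the ground. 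Second, and as you say, there is no volume hypothesis here, so Cheeger--Gromov--Taylor and Hamilton compactness are unavailable in any case.

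The arguments in \cite{ChenBL} and \cite{speed} bypass all of this: they run a direct maximum-principle computation on a quantity of the form $\varphi(d_{g(t)}(y_0,\cdot),t)\,|\Rm|_{g(t)}$, where $\varphi$ is a carefully chosen cutoff, using the evolution inequality $(\partial_t-\Delta)|\Rm|\le c(n)|\Rm|^2$ together with the Laplacian/$\partial_t$ control on $d_{g(t)}$ that the bound $|\Rm|\le c_0/t$ provides. The point is an ODE/barrier comparison producing the $e^{Cc_0}r^{-2}$ bound without ever forming a limit. Your diagnosis of this in the last paragraph is accurate; the blow-up route is not a viable alternative here.
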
  

Lemma $\ref{Shi}$ is an non-standard version of Shi's derivative estimates. The proof of it can be found in \cite[Lemma~A.4]{incompleteness} and \cite[Theorem~14.16]{PLu}.
\begin{lem}\label{Shi}
 Suppose $(M^n,g(t))$ is a Ricci flow for $t\in [0,T]$, not necessarily complete, with the property that for some $y_0\in M$ and $r>0$, we have $B_{g(0)}(y_0,r)\subset\subset M$ and $|\Rm|_{g(t)}\le r^{-2}$ on $B_{g(0)}(y_0,r)$ for all $t\in[0,T]$, and so that for some $l_0\in\mathbb{N}$ we have initially $|\nabla^{l}\Rm|_{g(0)}\le r^{-2-l}$ on $B_{g(0)}(y_0,r)$ for all $l\in\{1,2,...,l_0\}$. Then there exists $C=C(l_0,n,\frac{T}{r^2})$ such that 
 \begin{equation}
 |\nabla^{l}\Rm|_{g(t)}(y_0)\le Cr^{-2-l}
 \end{equation}
 for each $l\in\{1,2,...,l_0\}$ and all $t\in[0,T]$.
\end{lem}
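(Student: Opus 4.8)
The statement is a localized, "$t$-independent" version of Shi's derivative estimates, so the plan is to run the standard Bernstein--Shi maximum principle argument, made local by a cutoff supported in $B_{g(0)}(y_0,r)$, and to use the hypothesized initial bounds $|\nabla^l\Rm|_{g(0)}\le r^{-2-l}$ to keep the usual $t^{-l/2}$ blow-up from appearing. First I would parabolically rescale so that $r=1$; since $T/r^2$ is scale-invariant it suffices to show that $|\Rm|_{g(t)}\le 1$ on $B_{g(0)}(y_0,1)$ for $t\in[0,T]$ together with $|\nabla^l\Rm|_{g(0)}\le 1$ there for $l\le l_0$ imply $|\nabla^l\Rm|_{g(t)}(y_0)\le C(l_0,n,T)$. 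The curvature bound and $\pt g=-2\Ric$ give $|\pt g|_g\le C(n)$ on the ball, hence $e^{-C(n)t}g(0)\le g(t)\le e^{C(n)t}g(0)$; so the metrics $g(t)$, $t\in[0,T]$, are uniformly equivalent on the ball with constants depending only on $n,T$, one has $\Ric_{g(t)}\ge -C(n)$ throughout, and there is a fixed $\rho_0=\rho_0(n,T)\in(0,\tfrac12]$ with $\bigcup_{t\in[0,T]}B_{g(t)}(y_0,\rho_0)\subset B_{g(0)}(y_0,1)$. I would then fix radii $\tfrac12\ge\rho_0>\rho_1>\cdots>\rho_{l_0}\ge\tfrac14$ and, for each $l$, a cutoff $\psi_l$ -- a smooth function of $d_{g(0)}(y_0,\cdot)$ -- with $\psi_l\equiv 1$ on $B_{g(0)}(y_0,\rho_{l+1})$, $\mathrm{supp}\,\psi_l\subset B_{g(0)}(y_0,\rho_l)\subset\subset M$, $|\nabla\psi_l|^2\le C\psi_l$, and $|\Delta\psi_l|\le C$ in the barrier sense; the two-sided bound on $\Delta\psi_l$ comes from the two-sided curvature bound on the annulus $\{\rho_{l+1}\le d_{g(0)}\le\rho_l\}$ (which is bounded away from $y_0$) via Hessian comparison together with a Calabi-type perturbation at the cut locus, with constants depending only on $n$ and $l_0$; by metric equivalence the analogous bounds hold for $g(t)$ up to factors $C(n,T)$.

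The core is an induction on $l$. Writing $u_l=|\nabla^l\Rm|_{g(t)}^2$, the Bando--Bernstein--Shi evolution inequalities give $\pt u_l\le\Delta u_l-2u_{l+1}+C(n,l)\sum_{j+k=l}|\nabla^j\Rm|\,|\nabla^k\Rm|\,|\nabla^l\Rm|$; the case $l=0$ is a hypothesis. Assuming inductively that $u_0,\dots,u_{l-1}\le B=B(l,n,T)$ on $B_{g(0)}(y_0,\rho_l)\times[0,T]$, the sum reduces to $2|\Rm|u_l$ plus middle terms, bounded by $Cu_l+CB(u_l+1)$, so $\pt u_l\le\Delta u_l-2u_{l+1}+C(u_l+1)$. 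Following Shi I would work with $F_l=(\beta+u_{l-1})u_l$ for a large $\beta=\beta(B)$: using the $-2u_l$ term in the evolution of $u_{l-1}$ one finds $\pt F_l\le\Delta F_l-2\ip{\nabla u_{l-1},\nabla u_l}-c\,u_l^2+C(F_l+1)$, and after enlarging $\beta$ the indefinite cross term is absorbed by the $-u_{l+1}$ and $-cu_l^2$ terms via Cauchy--Schwarz. Multiplying by $\psi_l$ and applying the maximum principle to $\psi_l F_l$ on $B_{g(0)}(y_0,\rho_l)\times[0,T]$ -- legitimate \emph{without} completeness since $\psi_l F_l$ is continuous and, for every $t$, supported in the fixed compact set $\mathrm{supp}\,\psi_l$ -- the negative term $-c\psi_l u_l^2$ at an interior spatial maximum dominates the cutoff contributions $|\nabla\psi_l|^2 F_l/\psi_l\le CF_l$ and $F_l|\Delta\psi_l|\le CF_l$, so $\psi_l F_l$ cannot exceed a constant $B'=B'(l,n,T)$ there; since $F_l\ge\beta u_l$ and $\psi_l\equiv1$ on $B_{g(0)}(y_0,\rho_{l+1})$, this yields $u_l\le B'/\beta$ on $B_{g(0)}(y_0,\rho_{l+1})\times[0,T]$. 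The initial bound enters decisively here: $u_l(\cdot,0)\le 1$ bounds $\psi_l F_l$ at $t=0$, so the comparison ODE $\tfrac{d}{dt}\le C(\cdot+1)$ starts from a finite value and $B'$ does not involve $t^{-1}$. Iterating $l_0$ times, evaluating at $y_0$, and undoing the rescaling gives the lemma with $C=C(l_0,n,T/r^2)$.

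The main obstacle is precisely the localization. One cannot simply discard $-2u_{l+1}$ and apply a naive maximum principle, because the gradient cross-term $\ip{\nabla\psi_l,\nabla u_l}$ and the Hessian-of-cutoff term generate contributions of size $u_l$ rather than $\psi_l u_l$, which are not controlled by the localized quantity; this is why the Shi auxiliary function $(\beta+|\nabla^{l-1}\Rm|^2)|\nabla^l\Rm|^2$ must be used, its evolution supplying the genuinely negative term $-c|\nabla^l\Rm|^4$ needed to absorb them, and why the cutoff must satisfy $|\nabla\psi_l|^2\lesssim\psi_l$ together with a barrier-sense two-sided bound on $\Delta\psi_l$ -- the latter being the delicate point for a cutoff built from the non-smooth distance function $d_{g(0)}$. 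The only remaining work is bookkeeping: verifying that the constant produced at each stage of the induction depends only on the previous one (and on $n$), so that after $l_0$ steps it depends only on $l_0$, $n$, and the scale-invariant quantity $T/r^2$.
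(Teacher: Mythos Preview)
The paper does not give its own proof of this lemma; it simply states the result and cites \cite[Lemma~A.4]{incompleteness} and \cite[Theorem~14.16]{PLu} for the argument. Your proposal is the standard localized Bernstein--Shi scheme that those references carry out: parabolic rescaling to $r=1$, induction on $l$ using the auxiliary quantity $(\beta+|\nabla^{l-1}\Rm|^2)|\nabla^l\Rm|^2$ to generate a good negative term, and a cutoff-weighted maximum principle on the compactly supported function $\psi_l F_l$ so that completeness is not needed. This is correct in outline and matches the cited approach; the only point requiring care is your two-sided barrier bound on $\Delta\psi_l$, which those references handle either by working on a slightly smaller $g(t)$-ball at each step (so Laplacian comparison applies directly to $d_{g(t)}$) or by the Calabi trick you indicate.
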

\begin{proof}[Proof of Corollary \ref{global existence}]
Pick any point $x_0\in M$. We apply Theorem $\ref{t: theorem1}$ with $s_0=k+2$, for each integer $k\ge 2$, giving a Ricci flow $(B_{g_0}(x_0,k),g_k(t))_{t\in[0,\tau]}$ satisfying 
\begin{equation}
\begin{dcases}
 \Rm_{g_k(t)}+C\alpha_0\textnormal{I}\in\mathcal{C}\\
 |\Rm|_{g_k(t)}\le\frac{C}{t}
\end{dcases}
\end{equation}
on $B_{g_0}(x_0,k)$ for all $t\in(0,\tau]$, where $\tau=\tau(n,v_0,\alpha_0)>0$ and $C=C(n,v_0)>0$.

Fix some $r_0>0$. Since $B_{g_0}(x_0,r_0+2)$ is compactly contained in $M$, we have $\sup\,|\Rm|\le\frac{1}{r^2}$ for some $r>0$, where the supremum is taken over $B_{g_0}(x_0,r_0+2)$. For any $y_0\in B_{g_0}(x_0,r_0+1)$, by the Shrinking Lemma we have $B_{g_k(t)}(y_0,\frac{1}{2})\subset B_{g_0}(y_0,1)\subset B_{g_0}(x_0,r_0+2)$ for all $t\in[0,\tau]$, with possibly reducing $\tau$ to a smaller number depending also on $n,\alpha_0$ and $v_0$. Now we can apply Lemma $\ref{pseudo}$ to $g_k(t)$ centered at $y_0$ and get $|\Rm|_{g_k(t)}(y_0)\le K_0=K_0(r,n,C)$. In particular, $K_0$ is independent of $k$. Thus, we have $|\Rm|_{g_k(t)}\le K_0$ on $B_{g_0}(x_0,r_0+1)$ for all $t\in[0,\tau]$.

Then we apply Lemma $\ref{Shi}$ to $g_k(t)$ centered at each $y_0\in B_{g_0}(x_0,r_0)$. The outcome is for each $l\in\mathbb{N}$, there exists $K_1=K_1(n,l,r,\tau)$ such that
\begin{equation}
|\nabla^l\Rm|_{g_k(t)}\le K_1
\end{equation}
on $B_{g_0}(x_0,r_0)$ for all $t\in[0,\tau]$. Again, the $K_1$ is also independent of $k$. Using these derivative estimates in local coordinate charts, by Ascoli-Arzela Lemma we can pass to a subsequence in $k$ and obtain a smooth limit Ricci flow $g(t)$ on $B_{g_0}(x_0,r_0)$ for $t\in[0,\tau]$ with $g(0)=g_0$, which satisfies
\begin{equation}
\begin{dcases}
\Rm_{g(t)}+C\alpha_0\in\mathcal{C}\\
|\Rm|_{g(t)}\le\frac{C}{t}
\end{dcases}
\end{equation}
on $B_{g_0}(x_0,r_0)$, for all $t\in(0,\tau]$.

We repeat this process for larger and larger radii $r_i\rightarrow\infty$, and take a diagonal subsequence to obtain $g_{k_{i}}(t)$ which converges on each $B_{g_0}(x_0,r_0)$. Its limit is a smooth Ricci flow $g(t)$ on the whole of $M$ for $t\in[0,\tau]$ with $g(0)=g_0$.

By the Shrinking Lemma, $B_{g(t)}(x_0,r)\subset B_{g_0}(x_0,r+\beta\sqrt{Ct})\subset\subset M$ for all $t\in(0,\tau]$ and $r>0$. This guarantees that $g(t)$ must be complete for all positive times $t\in(0,\tau]$.
\end{proof}

\begin{proof}[Proof of Corollary \ref{limit space}]
We apply Corollary $\ref{global existence}$ and Lemma $\ref{l: volume}$ to each $(M_i,g_i)$ and obtain a sequence of Ricci flows $(M_i,g_i(t))_{[0,T]}$ with $g_i(0)=g_i$ and $T$ uniform for each $i$, and satisfying the following uniform estimates
\begin{equation}\label{triple}
\begin{dcases}
 \Rm_{g_i(t)}+C\alpha_0\textnormal{I}\in\mathcal{C}\\
 Vol_{g_i(t)}B_{g_i(t)}(x,1)\ge v>0 \\
 |\Rm|_{g_{i}(t)}\le\frac{C}{t}
\end{dcases}
\end{equation}
for all $x\in M_i$ and all $t\in[0,T]$, where constant $C>0$ depends on $n,v_0$, and constants $v,T>0$ depend on $n,v_0,\alpha_0$. And
\begin{equation}\label{distance_0}
 d_{g_i(t_1)}(x,y)-\beta\sqrt{C}(\sqrt{t_2}-\sqrt{t_1})\le d_{g_i(t_2)}(x,y)\le e^{K(t_2-t_1)}d_{g_i(t_1)}(x,y)
\end{equation}
for any $0<t_1\le t_2\le T$ and any $x,y\in M_i$, where $K$ depends on $n,v_0,\alpha_0$. The curvature decay for all positive times provides a uniform bound on the curvature which allows us to apply Hamilton's compactness theorem. We can pass to a subsequence in $i$ so that $(M_i,g_i(t),x_i)\rightarrow (M,g(t),x_{\infty})$ in the Cheeger-Gromov sense, where $(M,g(t))$ is a complete Ricci flow defined over $(0,T]$. $(M,g(t))$ inherits the estimates for curvatures and distance:
\begin{equation}
\begin{dcases}
 \Rm_{g(t)}+C\alpha_0\textnormal{I}\in\mathcal{C}\\
 Vol_{g(t)}B_{g(t)}(x,1)\ge v>0 \\
 |\Rm|_{g(t)}\le\frac{C}{t}
\end{dcases}
\end{equation}
and
\begin{equation}\label{distance}
 d_{g(t_1)}(x,y)-\beta\sqrt{C}(\sqrt{t_2}-\sqrt{t_1})\le d_{g(t_2)}(x,y)\le e^{K(t_2-t_1)}d_{g(t_1)}(x,y)
\end{equation}
for any $0<t_1\le t_2\le T$, and any $x,y\in M$. The inequality $\eqref{distance}$ tells us that $d_{g(t)}$ converges locally uniformly to some metric $d_0$ as $t\searrow 0$. Also, the inequality $\eqref{distance_0}$ tells us that $d_{g_i(t)}$ converges locally uniformly to $d_{g_i}$. So we have $(M_i,d_{g_i},x_i)\rightarrow (M,d_0,x_{\infty})$ in the pointed Gromov-Hausdorff sense. 

By Lemma $\ref{holderinequality}$ we have
\begin{equation}\label{inequality}
 \gamma d_0(x,y)^{1+2(n-1)C}\le d_{g(t)}(x,y)\le e^{Kt}d_0(x,y),
\end{equation}
where $\gamma$ depends only on $n$ and $C$ from $\eqref{triple}$. Then the claim of bi-H$\ddot{\textnormal{o}}$lder homeomorphism follows immediately from this. 
\end{proof}
\end{section}

\begin{section}{Acknowledgement}
The author would like to thank her Ph.D adviser Prof. Richard Bamler for giving numerous valuable advise and his generosity in time.

The author would also like to thank Prof. Peter Topping for interesting discussion.

\end{section}
 
\begin{section}{Reference}

\bibliography{MyCollection}
\end{section}

\end{document}